\documentclass[aos,preprint,reqno]{imsart}
\setattribute{journal}{name}{}
\RequirePackage{tikz}

\usepackage[round,colon,authoryear]{natbib}
\usepackage{subcaption}
\RequirePackage[OT1]{fontenc}
\RequirePackage{amsthm,amsmath,bm,natbib,graphicx,enumitem}
\RequirePackage[colorlinks,citecolor=blue,urlcolor=blue]{hyperref}
\RequirePackage{hypernat}
\usepackage{amssymb,tabularx,multicol,multirow,booktabs,csquotes}
\usepackage[top=1in, bottom=1in, left=1in, right=1in]{geometry}
\usepackage{comment}




\newtheorem{theorem}{Theorem}
\newtheorem{corollary}{Corollary}
\newtheorem{lemma}{Lemma}

\newtheorem{remark}{Remark}

\newtheorem{lem}[lemma]{Lemma}

\DeclareMathOperator{\sech}{sech}

\def \bbeta{\boldsymbol{\beta}}
\def \bx{\mathbf{x}}
\def \by{\mathbf{y}}
\def \bQ{\mathbf{Q}}
\def \bX{\mathbf{X}}

\def \be{\begin{align*}}
\def \ee{\end{align*}}

\def \I{\mathcal{I}}

\def \E{\mathbb{E}}
\def \P{\mathbb{P}}

\def \CG {\mathrm{Cov}_{\beta,\mathbf{Q},\boldsymbol{\mu}}}
\def \VG {\mathrm{Var}_{\beta,\mathbf{Q},\boldsymbol{\mu}}}

\def \bmu{\pmb{\mu}}
\def \R{\mathbb{R}}

\def \bd {\overline{d}}

\def \PJ {\P_{\mathbf{Q},\bmu}}

\def \bJ {\mathbf{J}}
\def \btQ {\tilde{\mathbf{Q}}}
\def \bI {\mathbf{I}_n}

\def \bDL {\boldsymbol{\Delta}^{(n)}}
\def \CZ {\mathrm{Cov}_{\beta,\bQ,\mathbf{0}}}
\def \VZ {\mathrm{Var}_{\beta,\bQ,\mathbf{0}}}
\def \PG {\P_{\beta,\mathbf{Q},\boldsymbol{\mu}}}
\def \EG {\E_{\beta,\mathbf{Q},\boldsymbol{\mu}}}
\def \EZ {\E_{\beta,\mathbf{Q},\boldsymbol{0}}}

\def \bmme {\mathbb{E}_{\boldsymbol{\mu}}}

\def \mm {\mathbf{m}}
\def \mc {\mathbf{c}}
\def \PZ {\mathbb{P}_{\beta,\mathbf{Q},{\bf 0}}}
\def \ZZ {Z_n(\beta,\mathbf{Q},{\bf 0})}

\begin{document}
\begin{frontmatter}
	\title{Detecting Structured Signals in Ising Models}
	\runtitle{Detection Thresholds for Ising Models}
		\thankstext{m2}{The research of Sumit Mukherjee was supported in part by NSF Grant DMS-1712037.}
	\thankstext{m4}{The research of Ming Yuan was supported in part by NSF Grant DMS-2015285.}

	\begin{aug}
			\author{\fnms{Nabarun} \snm{Deb}\thanksref{m3}\ead[label=e4]{ nd2560@columbia.edu}},
		\author{\fnms{Rajarshi} \snm{Mukherjee}\thanksref{m1}\ead[label=e2]{ram521@mail.harvard.edu}},
		\author{\fnms{Sumit} \snm{Mukherjee}\thanksref{m2}\ead[label=e1]{sm3949@columbia.edu}},
		\and
		\author{\fnms{Ming} \snm{Yuan}\thanksref{m4}\ead[label=e3]{ming.yuan@columbia.edu}}

		\affiliation{Harvard University\thanksmark{m1} and Columbia University \thanksmark{m2}\thanksmark{m4}\thanksmark{m3} }
		
		\address{Department of Biostatistics,\\
		655 Huntington Avenue, Boston, MA- 02115. \\
		E-mail: \href{mailto:ram521@mail.harvard.edu }{ram521@mail.harvard.edu } }

		\address{Department of Statistics\\
			1255 Amsterdam Avenue\\
			New York, NY-10027. \\
			E-mail: \href{mailto:nd2560@columbia.edu }{nd2560@columbia.edu } \\
			E-mail: \href{mailto:sm3949@columbia.edu }{sm3949@columbia.edu } \\
			E-mail: \href{mailto:ming.yuan@columbia.edu }{ming.yuan@columbia.edu } }
		

	\end{aug}

\begin{abstract} 
	In this paper we study the effect of dependence on detecting a class of signals in Ising models, where the signals are present in a structured way. Examples include Ising Models on lattices, and Mean-Field type Ising Models (Erd\H{o}s-R\'{e}nyi, Random regular, and dense graphs). Our results rely on correlation decay and mixing type behavior for Ising Models, and demonstrate the beneficial behavior of criticality in detection of strictly lower signals. As a by-product of our proof technique, we develop sharp control on mixing and spin-spin correlation for several Mean-Field type Ising Models in all regimes of temperature -- which might be of independent interest.
	
\end{abstract}

\begin{keyword}[class=AMS]
\kwd[Primary ]{62G10}
\kwd{62G20}
\kwd{62C20}
\end{keyword}
\begin{keyword}
\kwd{Ising Model}
\kwd{Signal Detection}
\kwd{Structured Sparsity}
\end{keyword}

\end{frontmatter}

\section{Introduction} 
Let $\bX=(X_1,\ldots,X_n)^\top\in \{\pm 1\}^n$ be a random vector with the joint distribution of $\bX$ given by an Ising model defined as:
\begin{align}
\P_{\beta, \bQ,\bmu}(\bX=\bx):=\frac{1}{{\color{black} Z_n(\beta,\mathbf{Q}, \mathbf{\bmu})}}\exp{\left(\frac{\beta}{2}\bx^\top\mathbf{Q} \bx+\bmu^\top\bx\right)},\qquad \forall \bx \in \{\pm 1\}^n.
\label{eqn:general_ising}
\end{align}
Here $\mathbf{Q}$ is an $n \times n$ symmetric matrix with $0$'s on the diagonal, $\bmu:=(\mu_1,\ldots,\mu_n)^\top\in \mathbb{R}^{n}$ is an unknown parameter vector to be referred to as the external magnetization vector, $\beta\in \mathbb{R}$ is a real number usually referred to as the ``inverse temperature", and {\color{black} $Z_n(\beta,\mathbf{Q}, \mathbf{\bmu})$} is the normalizing constant.  
The pair $(\beta,\mathbf{Q})$ characterizes the dependence among the coordinates of $\bX$, and $X_i$'s are independent if $\beta\mathbf{Q}=\mathbf{0}_{n\times n}$.    
 We are interested in understanding the role of dependence $(\beta,\bQ)$ in testing against a collection of alternatives defined by a class of subsets $\mathcal{C}_n$ of $\{1,2,\ldots ,n\}$ each of which is of size $s$. More precisely, given any class of subsets $\mathcal{C}_n$ of $\{1,2,\ldots ,n\}$ of size $s\in [n]$, we consider testing the following hypotheses
\begin{equation} 
	H_0: \bmu=\mathbf{0} \quad {\rm vs} \quad H_1: \bmu \in \Xi(\mathcal{C}_n,s,A), \label{eqn:sparse_hypo}
\end{equation}
where
$${\Xi}(\mathcal{C}_n,s,A):=\left\{\begin{array}{c}\bmu\in \R_+^n: \mathrm{supp}(\bmu)\in \mathcal{C}_n,  \min\limits_{i\in {\rm supp}(\bmu)}\mu_i\geq A\end{array}\right\},\quad
\text{ and }{\rm supp}(\bmu):=\{i\in \{1,\ldots,n\}:\mu_i\ne 0\}.
$$
Thus the class of alternatives $\Xi(\mathcal{C}_n,s,A)$ puts non-zero signals on one of the candidate sets in $\mathcal{C}_n$ where each signal set has size $s$.  Throughout we shall assume that there exists a $\upsilon>0$ such that $s\leq n^{1-\upsilon}$. However, some of our results go through for $s$ as large as $\frac{n}{\log{n}}$. Finally, {\color{black} although we only consider one directional signals}, our results should go through for any non-critical $\beta$ for bi-directional signals as well.

Of primary interest here is to explore the effect of $(\beta,\mathbf{Q})$ on testing \eqref{eqn:sparse_hypo} for some structured signal classes $\mathcal{C}_n$. Examples of such signals will include geometric structures such as block signals on a lattice or suitable classes of low entropy signals (e.g. class of signals having enough disjoint sets -- see Section \ref{sec:mean_field} for precise definitions) on graphs with no inherent geometry. In this regard, previously, \cite{arias2011detection,arias2005near} studied the detection of block-sparse and thick shaped signals on lattices while \cite{addario2010combinatorial} considered general class of signals of combinatorial nature -- however both these papers assume independent outcomes which corresponds to $\beta=0$ in~\eqref{eqn:general_ising}. Several other papers have also considered detection of contiguous signals over lattices and networks (see e.g. \cite{enikeeva2018bump,zou2017nonparametric,arias2018distribution,sharpnack2015detecting,walther2010optimal,butucea2013detection,konig2020multidimensional} and references therein). However, in overwhelming majority of the literature, the networks in question have only been used to describe the nature of signals -- such as rectangles or thick clusters in lattices \citep{arias2011detection}. A fundamental question however remains -- ``how does dependence characterized by a network modulate the behavior of such detection problems?" In this regard, \cite{enikeeva2020bump} recently explored the effect of dependence on such detection problems for stationary Gaussian processes -- with examples including linear lattices studied through the lens of Gaussian auto-regressive observation schemes. Dependence structures beyond Gaussian random variables are often more challenging to analyze (due to possible lack of closed form expressions of resulting distributions) and allow for interesting and different behavior of such testing problems -- see e.g. \cite{mukherjee2016global}. One of the motivations of this paper is to fill this gap in the literature and show how dependent binary outcomes can substantially change the results for detecting certain classes structured signals. 

To this end, we adopt a standard asymptotic minimax framework as follows. Let a statistical test for $H_0$ versus $H_1$ be a measurable $\{0,1\}$ valued function of the data $\bX$, with $1$ denoting rejecting the null hypothesis $H_0$ and $0$ otherwise. The worst case risk of a test $T: \{\pm 1\}^{n}\to \{0,1\}$ for testing \eqref{eqn:sparse_hypo} is defined as 
\begin{align} 
	\mathrm{Risk}(T,{\Xi}(\mathcal{C}_n,s,A),\beta,\bQ)&:=\P_{\beta,\bQ,\mathbf{0}}\left(T(\bX)=1\right)+\sup_{\bmu \in {\Xi}(\mathcal{C}_n,s,A)}\P_{\beta,\bQ,\bmu}\left(T(\bX)=0\right). \label{eqn:general_hypo_ising}
\end{align}
We say that a sequence of tests $T_n$ corresponding to a sequence of model-problem pair (\eqref{eqn:general_ising}) and (\eqref{eqn:general_hypo_ising}), to be asymptotically powerful, asymptotically not powerful, and asymptotically powerless against $\Xi({\mathcal{C}_n,s,A})$  respectively, if
\begin{eqnarray*}
	\label{eqn:powerful}
	\limsup\limits_{n\rightarrow \infty}\mathrm{Risk}(T_n,\Xi(\mathcal{C}_n,s,A),\beta,\bQ)= 0,\\
		\liminf\limits_{n\rightarrow \infty}\mathrm{Risk}(T_n,\Xi(\mathcal{C}_n,s,A),\beta,\bQ)>0,\\
	\liminf\limits_{n\rightarrow \infty}\mathrm{Risk}(T_n,\Xi(\mathcal{C}_n,s,A),\beta,\bQ)=1.
\end{eqnarray*}
The goal of the current paper is to characterize how the sparsity $s$ and strength $A$ of the signal jointly determine if there is an asymptotically powerful test, and how the behavior changes with $(\beta,\bQ)$. In this regard, the main results of this paper are summarized below.

\begin{itemize}
\item{\bf General Upper Bounds}

\begin{enumerate}
\item [(I)] For a general class of $(\beta,\bQ)$ with $(\beta,\bQ)$ known, we show that a scan statistic can detect certain class of sparse signals \eqref{eqn:sparse_hypo} as soon as $\tanh(A)\gg \sqrt{\log n/s}$; see Theorem \ref{thm:upper} (in fact the change happens at a constant level). 

\item [(II)] A natural question is what happens if $(\beta,\bQ)$ are unknown. In this direction, we get the same detection boundary as above if the unknown $(\beta,\bQ)$ satisfies some additional assumptions (see Theorem \ref{thm:upper_unknown_beta_Q}).


\end{enumerate}

\item{\bf General Lower Bounds}
\begin{enumerate}
\item [(I)] If the signal set has \enquote{large} cardinality $s$, we provide a general lower bound by showing that no test is asymptotically powerful, under assumptions on correlations between spins (see Theorem \ref{thm:lower_short_range_large_s}) for Ferromagnetic Ising Models, if the signal $A$ is small.    


\item[(II)] The upper bound results suggest that testing is impossible if the signal set has \enquote{small} cardinality. We confirm this intuition by showing that {no test is asymptotically powerful irrespective of signal strength $A$} for small $s$, in Ferromagnetic Ising Models (see Theorem \ref{thm:lower_short_range_small_s}).


\end{enumerate}

\item{\bf Examples}

\begin{enumerate}
\item[(I)] {\bf Mean-Field Type Ising Models:} We apply our general results to several popular examples of Mean-Field Ising models. These include Ising models on dense regular graphs, random regular graphs with \enquote{large} degree, and  Erd\H{o}s-R\'{e}nyi graphs with \enquote{large} edge density. For $\beta\ne 1$ (which is the critical point for these Ising models), detection is impossible for small $s$ for any value of $A$, and detection is possible for large $s$ with the detection boundary $\tanh(A)\sim \sqrt{\frac{\log n}{s}}$. On the other hand, at criticality the detection boundary has three distinct regimes depending on the length of the signal set $s$, which we refer to informally as small, medium and large. For $s$ small, again no testing is possible for any signal strength $A$. For $s$ medium, detection is possible  with detection boundary $\tanh(A)\sim \sqrt{\frac{\log n}{s}}$, which is the same as the case $\beta\ne 1$. Finally if $s$ is large, the detection boundary shifts to $\tanh(A)\sim \frac{n^{1/4}}{s}$ instead, and thus allows for detection of much smaller signals only for $\beta=1$. This improved upper bound at criticality for $s$ large does not follow from Theorem~\ref{thm:upper} (which is based on a scan test), but instead utilizes a test based on sum of spins.
The proof of the lower bound requires bounds on correlation between spins at all temperatures (see e.g. Lemma \ref{lem:slarge} and other supporting results in Section \ref{sec:comres}). The proof of the upper bound at criticality follows from a careful analysis the sum of spins (see Lemma \ref{lem:altbeh}).
To the best of our knowledge, these results are new, and might be of independent interest.

\item [(II)] {\bf Ising Models on Lattices:} We show that for the classical Ising model on any fixed $d$-dimensional lattice the detection boundary again scales like $\tanh(A)\sim \sqrt{\frac{\log n}{s}}$ throughout the high temperature regime (right up to the critical temperature). The proof uses finite volume correlation decay, and ratio-scale mixing results. We note that similar arguments should apply in the low temperature positive pure-phase regime (plus boundary conditions). The case of free boundary conditions in the low temperature regime remains open.
\end{enumerate}
\end{itemize}

\subsection{Future scope} In this paper we have explored how the level of dependence in Ising models can modulate the behavior of detection problems for testing certain structured anomalies. This minimax hypothesis testing problem provides a natural next step in a rich area of research of detecting contiguous signals of geometric nature -- yet mostly under independence of the outcomes. Although we pinpoint the rates of minimax separation in this paper, we believe that there is a sharp constant threshold at which the transition happens from test-ability to non-test-ability  (see e.g. \cite{arias2011detection} for independent outcomes). In a different direction, one can study whether it is possible to attain optimal rates of detection for all $\beta$, if $\beta$ is unknown (but $\mathcal{Q}$ is known, say).


\subsection{Organization} The rest of the paper is organized as follows.  In Section \ref{section:upper} we present some general upper bounds -- including both the case of known and unknown dependence parameters $(\beta,\bQ)$. Section \ref{section:lower} contains a general lower bound result under Ferromagnetic condition (positive $\beta,\bQ$) and correlation decay type conditions. Subsequently, in Section \ref{section:Examples} we apply these general results to demonstrate sharp upper and lower bounds for detecting signals in several commonly studied classes of Ising Models. Section \ref{section:proofs} contains the proofs of the main results from Sections~\ref{section:upper}~and~\ref{section:lower}. In Section~\ref{sec:corpf}, we present the proofs of the theorems stated in Section~\ref{section:Examples}. Section~\ref{sec:comres} contains proofs of additional technical lemmas which may be of independent interest, pertaining to bounds on mixing, spin-spin correlations and asymptotic analysis of the sum of spins at critical temperature.

\subsection{Notation}\label{section:notation} 
Throughout,  $\E_{\beta,\bQ,\bmu},\mathrm{Var}_{\beta,\bQ,\bmu},\mathrm{Cov}_{\beta,\bQ,\bmu}$ will denote the expectation, variance, and covariance operators corresponding to the measure $\P_{\beta,\bQ,\bmu}$. For a vector $\bX$ following the Ising model as in~\eqref{eqn:general_ising}, the $i^{th}$ coordinate will be denoted by $X_i$, and  will often be referred to as the spin for vertex $i$. For a given sequence of symmetric matrices $\mathcal{Q}=\{\bQ_{n\times n}\}_{n\ge 2}$ with non-negative entries, we define the critical temperature as
\begin{align}\label{eq:critical}
\beta_{c}(\mathcal{Q})=\inf\left\{\beta>0: \lim_{h\downarrow 0}\lim_{n\rightarrow \infty}\E_{\beta,\bQ,\bmu(h)}\left(\frac{1}{n}\sum_{i=1}^n X_i\right)>0\right\},
\end{align}
where $\bmu(h)=(h,\ldots,h)^T\in \mathbb{R}^n$ denotes the vector with all coordinates equal to $h$. Similarly, for any $S\in \mathcal{C}_n$ and real number $\eta$ let $\bmu_S(\eta)$ denote the vector which has $\mu_i=\eta\I(i\in S)$.  
In the examples pursued in the rest of the paper the existence of the limit is a part of classical statistical physics literature, and we shall note relevant references whenever talking about critical temperature in our examples. We shall refer to $(0,\beta_c(\mathcal{Q}))$ as high temperature regime, and $\beta\in (\beta_c(\mathcal{Q}),\infty)$  as low temperature regime. 

For any $a,b \in \mathbb{N}$, we let $[a:b]=\{a,a+1,\ldots,b\}$ and $[a]=\{1,\ldots,a\}$. We also denote the $m$-dimensional $1$-vector $(1,1,\ldots,1)\in \mathbb{R}^m$ by $\mathbf{1}_m$. 
Also for any finite set $S$ we use $|S|$ to denote the number of elements in $S$. For any two vectors $\mathbf{v}_1,\mathbf{v}_2$ of same dimension and $1\leq p\leq \infty$ we let $\|\mathbf{v}_1-\mathbf{v}_2\|_p$ denote the Euclidean $\ell_p$ norm. For any real matrix $\mathbf{M}$ and $1\leq p\leq \infty$ we define the $p$-matrix norm of $\mathbf{M}$ as $\|\mathbf{M}\|_{p\rightarrow p}=\sup_{\|\mathbf{v}\|_p=1}\|\mathbf{M}\mathbf{v}\|_p$. For $p=2$, we drop the the subscript to use $\|\mathbf{M}\|$ as the spectral norm of $\mathbf{M}$. For $p=\infty$ we use the fact that $\|\bQ\|_{\infty\rightarrow\infty}=\sup_{i\in[n] }\sum_{j\in [n]}|\bQ_{ij}|$. 
Finally for any vector $\mathbf{v}\in \mathbb{R}^m$ and subset $S\subset [m]$ we let $\mathbf{v}_S$ to be the $|S|$-dimensional vector obtained by restricting $\mathbf{v}$ to the coordinates in $S$. For $\bX\sim \P_{\beta,\bQ,\bmu}$ as in~\eqref{eqn:general_ising}, let $m_i=m_i(\bX):=\sum_{j=1}^n \bQ_{ij}X_j$.

The results in this paper are mostly asymptotic (in $n$) in nature and thus requires some standard asymptotic  notations.  If $a_n$ and $b_n$ are two sequences of real numbers then $a_n \ll b_n$ and $a_n=o(b_n)$ implies that ${a_n}/{b_n} \rightarrow  0$ as $n \rightarrow \infty$. Similarly  $a_n \lesssim b_n$ and $a_n=O(b_n)$  implies that  $\limsup_{n \rightarrow \infty} {{a_n}/{b_n}} <\infty$.
We also say $a_n=\Theta(b_n)$ or $a_n \sim b_n$ if both $a_n=O(b_n)$ and $b_n=O(a_n)$. 
Finally for any integer $m\geq 1$ and real $0\leq p\leq 1$ we let $\mathrm{Bin}(m,p)$ denote the distribution of a binomial distribution with $m$ trials and success probability $p$. 

\section{General results} We divide our general results into two subsections pertaining to upper and lower bounds.

\subsection{\bf Upper Bounds: General Coupling Matrix}\label{section:upper}

Let us assume that the class of signals $\mathcal{C}_n$ satisfies
\begin{equation}\label{eq:forupper}
\log{|\mathcal{C}_n|}\leq C_u\log{n}, \quad |\mathcal{C}_n|\to\infty,
\end{equation}
for some constant $C_u>0$.
We now begin with a general result which pertains to pinning down a signal strength necessary for detection in a general class of $\bQ$. To describe the test, define for any $S\in \mathcal{C}_n$
\begin{align} 
L_{S}(\bmu):=\frac{1}{\sqrt{|S|}}\sum_{i\in S} (X_i-\tanh(\beta m_i+\mu_i))\nonumber,
\end{align}
where $m_i=\sum_{j=1}^n \bQ_{ij}X_j.$
For a fixed {\color{black} $\delta\in (0,1)$}, 
consider the test rejects the null hypothesis when 
\begin{align*}
L_n:=\sup_{S\in \mathcal{C}_n}|L_{S}(\mathbf{0})|>2(1+\beta\|\bQ\|_{\infty \rightarrow \infty})\sqrt{2(1+\delta)\log{|\mathcal{C}_n|}},
\end{align*}

\begin{theorem}\label{thm:upper}
	Suppose $\bX\sim \P_{\beta,\bQ,\bmu}$ with any $\beta\in \mathbb{R}$ and $\mathbf{Q}$ such that $\|\bQ\|_{\infty\to \infty}\leq C_u'$ for some constant $C_u'$ and~\eqref{eq:forupper} holds. Consider testing hypotheses about $\bmu$ as described by \eqref{eqn:sparse_hypo}. Then there exists a constant $C'>0$ such that if $\tanh(A)\ge  C'\sqrt{\frac{\log{n}}{s}}$, then the test based on $L_n$ defined above is asymptotically powerful.
\end{theorem}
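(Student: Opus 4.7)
The Gibbs conditional law $\E[X_i\mid X_{-i}]=\tanh(\beta m_i+\mu_i)$ is the workhorse. Under the null $\bmu=\mathbf{0}$ it gives $\E_{\beta,\bQ,\mathbf{0}}[X_i-\tanh(\beta m_i)\mid X_{-i}]=0$, and hence $\E_{\beta,\bQ,\mathbf{0}}[L_S(\mathbf{0})]=0$ for every $S\in\mathcal{C}_n$. My plan has three steps: (i) a sub-Gaussian-type concentration of $L_S(\mathbf{0})$ around zero under the null, yielding $L_n$ below the stated threshold with high probability after a union bound over $\mathcal{C}_n$; (ii) a matching lower bound on $\E[L_{S^*}(\mathbf{0})]$ under an arbitrary alternative with support $S^*\in\mathcal{C}_n$; and (iii) combining (i) and (ii) to exceed the threshold under the alternative.

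For (i), the test statistic is Lipschitz with respect to single spin flips: flipping $X_k$ perturbs $L_S(\mathbf{0})$ by at most $\tfrac{2}{\sqrt{|S|}}\bigl(\mathbf{1}\{k\in S\}+\beta\sum_{i\in S}|\bQ_{ik}|\bigr)$, and summing the squared differences over $k$ gives a total of order $(1+\beta\|\bQ\|_{\infty\rightarrow\infty})^2$ using the symmetry of $\bQ$ and $\|\bQ\|_{\infty\rightarrow\infty}\leq C_u'$. Feeding this Lipschitz bound into a concentration inequality for functions of the Ising measure -- for instance Chatterjee's exchangeable-pair inequality, or a direct high-moment computation that exploits $\E[X_i-\tanh(\beta m_i)\mid X_{-i}]=0$ to annihilate the leading cross covariances -- should produce a tail of the form $\P_{\beta,\bQ,\mathbf{0}}(|L_S(\mathbf{0})|>t)\leq 2\exp\bigl(-ct^2/(1+\beta\|\bQ\|_{\infty\rightarrow\infty})^2\bigr)$. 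A union bound using $\log|\mathcal{C}_n|\leq C_u\log n$ then drives the null rejection probability to zero.

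For (ii), fix $\bmu\in\Xi(\mathcal{C}_n,s,A)$ with support $S^*$. Under $\P_{\beta,\bQ,\bmu}$ the Gibbs identity gives $\E[L_{S^*}(\mathbf{0})]=\tfrac{1}{\sqrt{s}}\sum_{i\in S^*}\E\bigl[\tanh(\beta m_i+\mu_i)-\tanh(\beta m_i)\bigr]$. The algebraic identity $\tanh(a+b)-\tanh(a)=\tanh(b)(1-\tanh^2 a)/(1+\tanh a\tanh b)$, combined with the deterministic bound $|\beta m_i|\leq \beta\|\bQ\|_{\infty\rightarrow\infty}$ and $\mu_i\geq A\geq 0$, delivers the pointwise estimate $\tanh(\beta m_i+\mu_i)-\tanh(\beta m_i)\geq c_\beta\tanh(A)$ for a positive constant $c_\beta$ depending only on $\beta$ and $\|\bQ\|_{\infty\rightarrow\infty}$. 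Hence $\E[L_{S^*}(\mathbf{0})]\geq c_\beta\sqrt{s}\,\tanh(A)$, and for $C'$ chosen large in terms of $\beta,C_u,C_u',\delta,c_\beta^{-1}$, the assumption $\tanh(A)\geq C'\sqrt{(\log n)/s}$ pushes this mean above twice the null threshold. Applying the same concentration bound from (i) under $\P_{\beta,\bQ,\bmu}$ -- whose Lipschitz constants are unaffected by the external field -- then forces $L_n\geq |L_{S^*}(\mathbf{0})|$ above the threshold with probability tending to one, uniformly over $\Xi(\mathcal{C}_n,s,A)$.

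The main obstacle I anticipate is step (i): standard bounded-differences or log-Sobolev type concentration for Ising measures typically requires a Dobrushin-type condition $\beta\|\bQ\|_{\infty\rightarrow\infty}<1$, whereas here $\beta$ is unrestricted and the threshold itself absorbs the $\beta$-dependence. My fallback would be to compute $\E_{\beta,\bQ,\mathbf{0}}[L_S(\mathbf{0})^{2k}]$ directly for growing $k$, exploiting the zero-conditional-mean property to cancel cross terms of order larger than $\beta|\bQ_{ij}|$, so as to obtain polynomial tails still strong enough to survive a union bound under $\log|\mathcal{C}_n|=O(\log n)$.
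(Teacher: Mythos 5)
Your proposal is correct and follows essentially the same route as the paper: conditional centering plus Chatterjee's exchangeable-pair concentration (the paper's Lemma \ref{lemma:chatterjee}, valid for every $\beta$ precisely because $L_S(\bmu)$ is conditionally centered, with the dependence absorbed in the factor $2(1+\beta\|\bQ\|_{\infty\to\infty})$), a union bound over $\mathcal{C}_n$ for the Type I error, and under the alternative the decomposition of $L_{S^*}(\mathbf{0})$ into $L_{S^*}(\bmu)$ plus the pointwise bound $\tanh(\beta m_i+\mu_i)-\tanh(\beta m_i)\gtrsim\tanh(A)$ using the boundedness of $\beta m_i$. The obstacle you anticipate in step (i) is not an issue — no Dobrushin-type condition is needed for the conditionally centered statistic, so your fallback moment computation is unnecessary.
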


 The fact that the test that attains the performance claimed in Theorem \ref{thm:upper} is, not surprisingly, a scan type procedure. However, to attain optimal separation rates across all regimes of dependence (i.e. $\beta$) we need to conditionally center the scanning elements instead of unconditional centering prevalent for hypothesis testing literature with independent outcomes. 
This version of the scan test relies explicitly on the full knowledge of the null distribution. Especially this requires that $\beta,\bQ$ are known. As we shall show in Section \ref{section:Examples}, this test is indeed optimal for any non-critical $\beta>0$ for a large class of underlying graphs. If however, the values of $(\beta,\bQ)$ are unknown but \enquote{small}, there exists a sequence of tests with the same detection thresholds as above, which does not depend on the knowledge of $\beta,\bQ$. 

To define the test statistic, for any $S\in \mathcal{C}_n$ set
\begin{align} 
\tilde{L}_{S}(\bmu):=\frac{1}{\sqrt{|S|}}\sum_{i \in S} (X_i-\EG X_i).\nonumber
\end{align}
Fixing $\delta\in (0,1)$, consider the test which rejects the null hypothesis when
\begin{align*}
\tilde{L}_n:=\sup_{S\in \mathcal{C}_n}|\tilde{L}_{S}(\mathbf{0})|>\sqrt{\frac{(1+\delta)\log |\mathcal{C}_n|}{1-\eta}}.
\end{align*}

\begin{theorem}\label{thm:upper_unknown_beta_Q}
	Suppose $\bX\sim \P_{\beta,\bQ,\bmu}$ with $0\leq \beta\|\bQ\|_{\infty\rightarrow \infty}\le \eta$ for some $0\leq \eta<1$ fixed, $\min_{i,j}\beta\bQ_{i,j}\geq 0$ and~\eqref{eq:forupper} holds. Consider testing hypotheses about $\bmu$ as described by \eqref{eqn:sparse_hypo}. Then for $C'>0$ large enough,  if $\tanh(A)\ge  C'\sqrt{\frac{\log{n}}{s}}$, the test based on $\tilde{L}_n$ above is asymptotically  powerful.
\end{theorem}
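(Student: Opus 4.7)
The plan is to split the analysis into Type I and Type II error, both resting on a Gaussian-type concentration inequality that is available whenever $\beta\|\bQ\|_{\infty\to\infty}\leq\eta<1$ (Dobrushin's uniqueness condition). The key input is the standard fact (Chatterjee's exchangeable-pair method, or equivalently log-Sobolev for Glauber dynamics at high temperature) that for any vector $a\in\R^n$ and any external field $\bmu$,
\[
\P_{\beta,\bQ,\bmu}\Bigl(\bigl|\textstyle\sum_{i} a_i(X_i-\E_{\beta,\bQ,\bmu}X_i)\bigr|>t\Bigr)\leq 2\exp\bigl(-c(\eta)\,t^2/\|a\|_2^2\bigr)
\]
for some $c(\eta)>0$ depending only on $\eta$. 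Specializing to $a=|S|^{-1/2}\mathbf{1}_S$ gives sub-Gaussian control of $\tilde L_S$ with variance proxy depending only on $\eta$, which is all we will need.

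\textbf{Null control.} When $\bmu=\mathbf{0}$ the Ising measure is sign-symmetric, so $\E_{\beta,\bQ,\mathbf{0}}X_i=0$ and the test statistic reduces to $\tilde L_S(\mathbf{0})=|S|^{-1/2}\sum_{i\in S}X_i$. Applying the concentration inequality at $t$ of order $\sqrt{\log|\mathcal{C}_n|/(1-\eta)}$ and union-bounding over $\mathcal{C}_n$ drives the Type I error to zero. Recovering the precise $(1+\delta)$ factor in the threshold requires the sharp form of the Dobrushin concentration bound, but any constant slack is absorbed into the freedom of choosing $\delta\in(0,1)$.

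\textbf{Alternative control.} Fix $\bmu\in\Xi(\mathcal{C}_n,s,A)$ with support $S^*$. Since $\beta\bQ\geq 0$ entrywise and $\bmu\geq 0$, the Griffiths--Kelly--Sherman (GKS) inequalities give $\partial\E_{\beta,\bQ,\bmu}X_i/\partial\mu_j=\CG(X_i,X_j)\geq 0$ for all $i,j$. Monotonically turning off $\mu_j$ for $j\neq i$ therefore yields, for any $i\in S^*$,
\[
\E_{\beta,\bQ,\bmu}X_i\;\geq\;\E_{\beta,\bQ,\bnu^{(i)}}X_i\;=\;\tanh(\mu_i)\;\geq\;\tanh(A),
\]
where $\bnu^{(i)}$ is the field equal to $\mu_i$ at coordinate $i$ and $0$ elsewhere; the equality is a one-site calculation using $e^{\mu_i x}=\cosh(\mu_i)+x\sinh(\mu_i)$ together with the null symmetry $\E_{\beta,\bQ,\mathbf{0}}X_i=0$. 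Hence $\E_{\beta,\bQ,\bmu}\tilde L_{S^*}(\mathbf{0})\geq\sqrt{s}\,\tanh(A)\geq C'\sqrt{\log n}$, which strictly dominates the threshold (of order $\sqrt{\log n}$ by~\eqref{eq:forupper}) once $C'$ is large enough. A second use of the concentration inequality, now under $\P_{\beta,\bQ,\bmu}$, shows that $\tilde L_{S^*}(\mathbf{0})$ stays within $O(\sqrt{\log n})$ of its mean with probability tending to one, killing the Type II error.

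\textbf{Main obstacle.} The only non-routine ingredient is the Gaussian concentration inequality with a constant adapted to $(1-\eta)^{-1}$, valid under \emph{every} admissible $\bmu$. Under the null this is textbook concentration for Dobrushin measures; the alternative step leans on the observation that the exchangeable-pair/coupling proofs depend only on $\beta\bQ$ and not on the magnetic field, so the same bound applies after shifting the centering to $\E_{\beta,\bQ,\bmu}X_i$. The remaining pieces---the GKS monotonicity and the single-site identity yielding $\tanh(\mu_i)$---are elementary and close the argument.
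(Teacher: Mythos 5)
Your proposal is correct and follows the same skeleton as the paper's proof: sub-Gaussian concentration of the centered spin sums uniformly in $\bmu$ under the Dobrushin-type condition $\beta\|\bQ\|_{\infty\to\infty}\le\eta<1$ (this is exactly the paper's Lemma \ref{lemma:chatterjee_high_temp}, obtained from Chatterjee's concentration theorem, whose proof indeed only uses the couplings and not the field), a union bound over $\mathcal{C}_n$ for the Type I error, and a lower bound on $\E_{\beta,\bQ,\bmu}X_i$ at signal sites plus a second application of concentration under the alternative for the Type II error. The one step where you genuinely diverge is the mean-separation bound: the paper's Lemma \ref{lemma:expectation_vs_externalmag} combines the pointwise inequality $\tanh(x+y)-\tanh(x)\ge(1-\tanh(K))\tanh(y)$ for $|x|\le K$ with stochastic monotonicity in $\bmu$, yielding $\E_{\beta,\bQ,\bmu}X_i\ge\rho\tanh(\mu_i)$ with $\rho=1-\tanh(\beta\|\bQ\|_{\infty\to\infty})$; you instead use GKS monotonicity of $\E X_i$ in the off-site fields (legitimate here since $\min_{i,j}\beta\bQ_{ij}\ge0$ and the alternatives are one-directional, $\bmu\in\R_+^n$) to reduce to a single-site field, where the identity $e^{\mu_i x}=\cosh\mu_i+x\sinh\mu_i$ together with the $\pm$ symmetry of the zero-field measure gives the exact value $\tanh(\mu_i)$. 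Your route is slightly sharper (constant $1$ instead of $\rho$) and arguably cleaner, at the cost of invoking the Griffiths covariance inequality, while the paper's version needs only FKG-type monotonicity; both rest on ferromagnetism, so the hypotheses are comparable. The only caveat is that to certify the \emph{specific} threshold $\sqrt{(1+\delta)\log|\mathcal{C}_n|/(1-\eta)}$ appearing in the statement, a generic constant $c(\eta)$ in the concentration bound does not suffice (the union bound would fail if $c(\eta)$ were too small); you need the sharp form $\P\bigl(|\tilde L_S(\bmu)|>t/\sqrt{1-\beta\|\bQ\|_{\infty\to\infty}}\bigr)\le2e^{-t^2}$, which you flag and which is precisely what the paper's Lemma \ref{lemma:chatterjee_high_temp} supplies.
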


\begin{remark}
We note that the condition $\eta<1$ is sharp, in that the test proposed in Theorem \ref{thm:upper_unknown_beta_Q} does not control type I error as soon as $\eta=1$. A counter example is provided by the Curie Weiss model, (Ising model on the complete graph) for which $\eta=1$ allows for the choice $\beta=\beta_c=1$, which is the critical temperature for this model (see the discussions after Theorem \ref{thm:dense_regular} in Section \ref{section:Examples} for exact details). 
\end{remark}

Examples of $\bQ$ which satisfy the assumption stated in Theorem \ref{thm:upper} and Theorem \ref{thm:upper_unknown_beta_Q} include some prototypical examples of Ising models studied in literature. We discuss them in detail in Section \ref{section:Examples}.


\subsection{\bf Lower Bounds: Ferromagnetic Models}\label{section:lower}
In this section we present results on lower bounds to demonstrate sharpness of Theorem \ref{thm:upper} for Ising models having $\min _{i,j}\beta \bQ_{ij}\geq 0$ -- traditionally referred to as Ferromagnetic model. 
In this regard, according to Theorem \ref{thm:upper}, successful detection is possible by a conditionally centered scan test provided $\tanh(A)\geq C' \sqrt{\frac{\log{n}}{s}}$ for a constant $C'>0$ which depends on the class of signals $\mathcal{C}_n$ and $\|\bQ\|_{\infty\rightarrow \infty}$ through the constants $C_u$ and $C_u'$. Since $\tanh(A)\in (-1,1)$, it seems that one might need $s$ to at least be of order $\log{n}$ for the success of this test. This intuition turns out to be true and there exists a phase transition in the possibility of testing depending on the behavior of $s$ w.r.t $\log{n}$. In particular, there exists constants $0< c\leq C<\infty$ (depending on the problem sequence $(\beta,\bQ)$ and class of alternatives $\mathcal{C}_n$) such that the detection problem behaves differently depending on whether $s\leq c\log{n}$ or $s\geq C\log{n}$. Before formally stating the relevant results, let us assume that there exists a constant $C_l>0$ and some sub-collection $\mathcal{C}_n'\subseteq \mathcal{C}_n$ of disjoint sets such that
\begin{equation}\label{eq:forlower}
\log{|\mathcal{C}_n'|}\geq C_l\log{n}.
\end{equation}
Note that \eqref{eq:forlower} immediately implies $\min(|\mathcal{C}_n|,|\mathcal{C}_n'|)\to \infty$, and so it need not be assumed separately. As the proofs of the results in the two regimes ($s$ small/large) involve substantially different ideas, we divide their presentation in separate subsections.

\subsubsection{Large signal size $s$ }  

The following theorem will be used to verify sharpness of the upper bound presented in Theorem \ref{thm:upper} for $s$ large.

\begin{theorem}\label{thm:lower_short_range_large_s}
Suppose $\bX\sim \P_{\beta,\bQ,\bmu}$ such that $\min\limits_{i,j}\beta\bQ_{ij}\geq 0$, $\|\bQ\|_{\infty\rightarrow \infty}\leq C_u'$ for some constant $C_u'>0$ and~\eqref{eq:forlower} holds. Consider testing \eqref{eqn:sparse_hypo}. Then there exists fixed constants $c',C>0$ such that the following hold:

\begin{enumerate}
	\item [(I)] Suppose there exists sequences $r_{n},r_{n}'$ diverging to $+\infty$ with  $r_n\geq C\log{n}$, and 
	\begin{itemize}
		\item $\sup\limits_{S\in \mathcal{C}_n'}\mathrm{Var}_{\beta,\bQ,\mathbf{0}}\left( \sum_{i\in S}X_i\right)\leq r_n $.
		
			\item $\sup\limits_{S_1\neq S_2\in \mathcal{C}_n'} \mathrm{Cov}_{\beta,\bQ,\mathbf{0}}\left(\sum_{i\in S_1} X_i,\sum_{j\in S_2} X_j\right)=o( r_n').$
		
		%
	\end{itemize}

Then all tests are asymptotically powerless when $\tanh(A)\leq c' \min\left\{\sqrt{\frac{\log{n}}{r_n}},\sqrt{\frac{1}{r_n'}}\right\}$. 

%
%

\item [(II)] 
Suppose there exists 
 an increasing set $\Omega_n\subseteq \{-1,+1\}^n$, constant $\kappa>0$, sequences $r_n,r_n'$ diverging to $+\infty$ with $r_n\geq C\log{n}$, such that: 

\begin{itemize}
	

	%
	\item $\sup_{\eta\in [0.A]}\sup_{S\in\mathcal{C}_n'}\mathrm{Var}_{\beta,\bQ,\bmu_S(2\eta)}(\sum_{i\in S}X_i|\Omega_n)\leq r_n.$
	\item $\sup_{\eta\in [0,A]}\sup_{S_1\neq S_2\in\mathcal{C}_n'}\big|\mathrm{Var}_{\beta,\bQ,\mathbf{\bmu}_{S_1\cup S_2}(\eta)}\left(\sum_{i\in S_1\cup S_2}X_i|\Omega_n\right)-\mathrm{Var}_{\beta,\bQ,\bmu_{S_1}(\eta)}\left(\sum_{i\in S_1}X_i|\Omega_n\right)-\mathrm{Var}_{\beta,\bQ,\bmu_{S_2}(\eta)}\left(\sum_{i\in S_2}X_i|\Omega_n\right)\big|= o(r_n'),$
			\item $\liminf_{n\rightarrow \infty}\P_{\beta,\bQ,\mathbf{0}}(\Omega_n)\geq \kappa$.
\end{itemize}
Then no test is asymptotically powerful if $\tanh(A)\leq c' \min\left\{\sqrt{\frac{\log{n}}{r_n}},\sqrt{\frac{1}{r_n'}}\right\}$.

\end{enumerate}

\end{theorem}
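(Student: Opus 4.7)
I would run a second-moment / Le~Cam-type lower bound using the uniform prior $\pi$ on the alternatives $\{\bmu_S(A):S\in\mathcal{C}_n'\}$; disjointness of $\mathcal{C}_n'$ is crucial, since it gives $T_{S_1\cup S_2}=T_{S_1}+T_{S_2}$ for distinct $S_1,S_2\in\mathcal{C}_n'$, where $T_S:=\sum_{i\in S}X_i$. Define $\phi_S(t):=\log\E_{\beta,\bQ,\mathbf{0}}[e^{tT_S}]$; the per-alternative likelihood ratio is $L_S=\exp(AT_S-\phi_S(A))$ and standard exponential tilting gives $\phi_S(0)=\phi_S'(0)=0$ and $\phi_S''(t)=\mathrm{Var}_{\beta,\bQ,\bmu_S(t)}(T_S)$. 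For disjoint $S_1\neq S_2$, $\E_{\beta,\bQ,\mathbf{0}}[L_{S_1}L_{S_2}]=\exp(\Delta_{S_1,S_2}(A))$ with $\Delta_{S_1,S_2}:=\phi_{S_1\cup S_2}-\phi_{S_1}-\phi_{S_2}$.

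\textbf{Part (I).} I would apply Le~Cam, $\inf_T\mathrm{Risk}(T)\ge 1-\tfrac12\sqrt{\E_{\beta,\bQ,\mathbf{0}}[L_\pi^2]-1}$, reducing the task to showing $\E_{\beta,\bQ,\mathbf{0}}[L_\pi^2]\to 1$. For the diagonal, the GHS inequality for Ferromagnetic Ising with non-negative external fields (concavity of the magnetization in the field) yields $\phi_S''(t)\le\phi_S''(0)=\mathrm{Var}_{\beta,\bQ,\mathbf{0}}(T_S)\le r_n$ for $t\ge 0$; integrating gives $\E_{\beta,\bQ,\mathbf{0}}[L_S^2]=e^{\phi_S(2A)-2\phi_S(A)}\le e^{2A^2r_n}\le n^{O(c'^2)}$ under $\tanh^2(A)\le c'^2\log n/r_n$, so the diagonal contribution is $\lesssim n^{O(c'^2)-C_l}\to 0$ for $c'$ small, using $|\mathcal{C}_n'|\ge n^{C_l}$. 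For the off-diagonal,
\begin{align*}
\Delta_{S_1,S_2}''(A)
&=\sum_{i=1}^2\big[\mathrm{Var}_{\beta,\bQ,\bmu_{S_1\cup S_2}(A)}(T_{S_i})-\mathrm{Var}_{\beta,\bQ,\bmu_{S_i}(A)}(T_{S_i})\big]\\
&\qquad+2\,\mathrm{Cov}_{\beta,\bQ,\bmu_{S_1\cup S_2}(A)}(T_{S_1},T_{S_2}),
\end{align*}
and GHS makes each bracket non-positive (turning on non-negative field on one side reduces the variance on the other, via monotonicity of spin-susceptibilities) while also giving $\mathrm{Cov}_{\beta,\bQ,\bmu_{S_1\cup S_2}(A)}(T_{S_1},T_{S_2})\le\mathrm{Cov}_{\beta,\bQ,\mathbf{0}}(T_{S_1},T_{S_2})=o(r_n')$. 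Integrating twice and using $\tanh^2(A)\le c'^2/r_n'$ gives $\Delta_{S_1,S_2}(A)=o(1)$, so the off-diagonal contribution tends to $1$ and $\E_{\beta,\bQ,\mathbf{0}}[L_\pi^2]\to 1$, yielding powerlessness.

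\textbf{Part (II).} I would use the two-stage bound
\[\inf_T\mathrm{Risk}(T)\ge\kappa\cdot\Big(1-\mathrm{TV}\big(\P_{\beta,\bQ,\mathbf{0}}(\cdot|\Omega_n),\P_\pi(\cdot|\Omega_n)\big)\Big),\]
which holds because $\P_\pi(\Omega_n)\ge\P_{\beta,\bQ,\mathbf{0}}(\Omega_n)\ge\kappa$ by FKG (the external fields are non-negative and $\Omega_n$ is increasing). Introduce the conditional cumulant generating function $\tilde\phi_S(t):=\log\E_{\beta,\bQ,\mathbf{0}}[e^{tT_S}|\Omega_n]$ and the conditional gap $\tilde\Delta_{S_1,S_2}:=\tilde\phi_{S_1\cup S_2}-\tilde\phi_{S_1}-\tilde\phi_{S_2}$; the key observation is that tilting $\P_{\beta,\bQ,\mathbf{0}}(\cdot|\Omega_n)$ by the density $e^{tT_S}$ produces exactly $\P_{\beta,\bQ,\bmu_S(t)}(\cdot|\Omega_n)$, so $\tilde\phi_S''(t)=\mathrm{Var}_{\beta,\bQ,\bmu_S(t)}(T_S|\Omega_n)$ and $\tilde\Delta_{S_1,S_2}''(t)$ equals the conditional variance-difference featured in the second hypothesis. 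The two assumptions then read $\tilde\phi_S''(t)\le r_n$ on $[0,2A]$ and $|\tilde\Delta_{S_1,S_2}''(t)|=o(r_n')$ on $[0,A]$. Using the identity $\phi_S(t)-\tilde\phi_S(t)=\log(\P_{\beta,\bQ,\mathbf{0}}(\Omega_n)/\P_{\beta,\bQ,\bmu_S(t)}(\Omega_n))$, direct computation gives
\[\E_{\beta,\bQ,\mathbf{0}}[L_{S_1}L_{S_2}\,|\,\Omega_n]=e^{\tilde\Delta_{S_1,S_2}(A)}\,\frac{\P_{\beta,\bQ,\bmu_{S_1}(A)}(\Omega_n)\,\P_{\beta,\bQ,\bmu_{S_2}(A)}(\Omega_n)}{\P_{\beta,\bQ,\mathbf{0}}(\Omega_n)^2},\]
and an analogous diagonal expression. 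The probability-ratios multiplicatively reassemble into $\P_\pi(\Omega_n)^2$ in the off-diagonal sum, canceling the $(\P_{\beta,\bQ,\mathbf{0}}(\Omega_n)/\P_\pi(\Omega_n))^2$ pre-factor that converts the unconditional chi-squared into the conditional one, leaving total off-diagonal contribution $1+o(1)$; the diagonal is as in Part~(I) up to a harmless $1/\kappa^2$. Hence the conditional TV tends to $0$ and $\mathrm{Risk}\ge\kappa(1-o(1))>0$, so no test is asymptotically powerful.

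\textbf{Main obstacle.} The technically delicate step, especially in Part~(II), is the careful bookkeeping of the FKG-controlled partition-function ratios $\P_{\beta,\bQ,\bmu_S(A)}(\Omega_n)/\P_{\beta,\bQ,\mathbf{0}}(\Omega_n)\in[1,1/\kappa]$: one must verify that they combine multiplicatively into $\P_\pi(\Omega_n)^2$ in the off-diagonal sum, cancelling the explicit $\P_\pi(\Omega_n)^{-2}$ coming from the conditional chi-squared, rather than leaving behind an unbounded residue. In Part~(I), the parallel subtlety is invoking GHS in the correct direction, so that the two variance-difference pieces of $\Delta_{S_1,S_2}''$ have the right (non-positive) sign and only the small-covariance piece survives.
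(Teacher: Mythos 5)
Your proposal is correct and follows essentially the same route as the paper's proof: a second-moment bound for the uniform prior on the disjoint sets in $\mathcal{C}_n'$, with the diagonal and off-diagonal exponents controlled by two-term Taylor expansion of the (conditional) log-partition functions — whose second derivatives are exactly the (conditional) variances in the hypotheses — together with the GHS inequality to compare tilted fields to zero field, and with the increasing event $\Omega_n$ handled through conditional exponential tilting plus FKG/stochastic monotonicity. The only cosmetic difference is the final packaging of Part (II): you bound the risk directly by $\kappa\left(1-\mathrm{TV}\big(\P_{\beta,\bQ,\mathbf{0}}(\cdot|\Omega_n),\P_{\beta,\bQ,\pi}(\cdot|\Omega_n)\big)\right)$, whereas the paper argues by contradiction via contiguity of the same conditional measures; the underlying computation is identical.
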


A few remarks are in order about the assumptions and implications of the result. First, the results are presented in two parts since they will eventually be applied (in Section \ref{section:Examples}) to prove sharp lower bounds for high and low temperature regimes separately. Indeed, for $\beta<\beta_c$ (for $\beta_c$ the critical temperature in specific examples) we will appeal to part I of the theorem while part II of the theorem will be used for the low temperature regime $\beta>\beta_c$ with the increasing event $\Omega_n$ typically being $\bar{\bX}\geq 0$. Moreover, in most of our examples $r_n =\Theta(s)$ and consequently, the condition $r_n\geq C\log{n}$ in Theorem~\ref{thm:lower_short_range_large_s} is equivalent to $s\geq C\log{n}$ for a (potentially) different constant $C>0$. The other term in the lower bound (corresponding to $r_n'$) plays a crucial role only at the critical temperature $\beta=\beta_c$. Finally, the main quantity that decides the validity of the lower bound presented above happens to be the correlation between spins $X_i$ and $X_j$ for suitable pairs $i,j$. Indeed, such correlation control is an area of active research and eventual verification of these conditions requires establishing correlation bounds on the graphs in our examples. We derive several new such bounds in Section \ref{sec:comres}.


\subsubsection{Small signal size $s$}

The following theorem will be used to verify sharpness of the upper bound presented in Theorem \ref{thm:upper} for $s$ small.

\begin{theorem}\label{thm:lower_short_range_small_s}
Suppose $\bX\sim \P_{\beta,\bQ,\bmu}$ such that $\min\limits_{i,j}\beta\bQ_{ij}\geq 0$ and~\eqref{eq:forlower} holds. Consider testing \eqref{eqn:sparse_hypo}. Then there exists $c>0$ such that whenever $s\leq c\log{n}$, the following holds:

\begin{enumerate}
\item[(I)]
If 
\[\lim_{n\to\infty}\sup_{S_1\ne S_2\in \mathcal{C}_n'}\Big|\frac{\P_{\beta,\bQ,\mathbf{0}}(\bX_{S_1}=1,\bX_{S_2}=1)}{\P_{\beta,\bQ,\mathbf{0}}(\bX_{S_1}=1)\P_{\beta,\bQ,\mathbf{0}}(\bX_{S_2}=1)}-1\Big|=0,\]
then all tests are asymptotically powerless irrespective of $A$.

\item[(II)]
If there exists an increasing set $\Omega_n$, a constant $\kappa>0$ such that $\liminf_{n\rightarrow\infty}\P(\Omega_n)>\kappa$ and the following holds:

 \[\lim_{n\to\infty}\sup_{S_1\ne S_2\in \mathcal{C}_n'}\Big|\frac{\P_{\beta,\bQ,\mathbf{0}}(\bX_{S_1}=1,\bX_{S_2}=1|\Omega_n)}{\P_{\beta,\bQ,\mathbf{0}}(\bX_{S_1}=1|\Omega_n)\P_{\beta,\bQ,\mathbf{0}}(\bX_{S_2}=1|\Omega_n)}-1\Big|=0,\]
 then no test is asymptotically powerful irrespective of $A$.

\end{enumerate}

\end{theorem}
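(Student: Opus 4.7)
The plan is to use Le Cam's second moment method with the mixture prior $\pi_A$ that draws $S$ uniformly from the disjoint sub-collection $\mathcal{C}_n'$ and sets $\bmu=\bmu_S(A)$. Writing $L_S:=d\P_{\beta,\bQ,\bmu_S(A)}/d\P_{\beta,\bQ,\mathbf{0}}$, a direct computation gives $L_S(\bx)=\exp\!\bigl(A\sum_{i\in S}x_i\bigr)/M_S(A)$, where $M_S(A):=\E_{\beta,\bQ,\mathbf{0}}[\exp(A\sum_{i\in S}X_i)]$. Driving the chi-square $\E_{\beta,\bQ,\mathbf{0}}\bigl[\bigl(|\mathcal{C}_n'|^{-1}\sum_{S\in\mathcal{C}_n'}L_S\bigr)^2\bigr]$ to $1$ will, by Cauchy--Schwarz, force the total variation between the mixture and the null to $0$ and hence the minimax risk to $1$.

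For part (I), expanding the square (and using that elements of $\mathcal{C}_n'$ are disjoint) gives
\begin{align*}
\E_{\beta,\bQ,\mathbf{0}}\Bigl[\Bigl(\tfrac{1}{|\mathcal{C}_n'|}\sum_{S\in\mathcal{C}_n'}L_S\Bigr)^{\!2}\Bigr]=\frac{1}{|\mathcal{C}_n'|^2}\sum_{S_1,S_2\in\mathcal{C}_n'}\frac{M_{S_1\cup S_2}(A)}{M_{S_1}(A)M_{S_2}(A)}.
\end{align*}
Differentiating the logarithm of each off-diagonal ratio in $A$ and invoking Griffiths/FKG (valid since $\min_{i,j}\beta\bQ_{ij}\ge 0$) shows it is non-decreasing in $A$, so it suffices to bound its $A\to\infty$ limit, at which the ratio equals $\P_{\beta,\bQ,\mathbf{0}}(\bX_{S_1}=1,\bX_{S_2}=1)/[\P_{\beta,\bQ,\mathbf{0}}(\bX_{S_1}=1)\P_{\beta,\bQ,\mathbf{0}}(\bX_{S_2}=1)]\to 1$ uniformly by hypothesis. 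The diagonal ($S_1=S_2=S$) is $1/\P_{\beta,\bQ,\mathbf{0}}(\bX_S=1)\le 2^{|S|}$ by FKG together with spin-flip symmetry $\P_{\beta,\bQ,\mathbf{0}}(X_i=1)=1/2$, contributing at most $2^s/|\mathcal{C}_n'|\le n^{c\log 2-C_l}=o(1)$ provided $c<C_l/\log 2$. Hence the chi-square tends to $1$ and (I) follows.

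For part (II), I would repeat with the conditional measure $\tilde{\P}_0:=\P_{\beta,\bQ,\mathbf{0}}(\cdot\mid\Omega_n)$ and the correspondingly conditioned mixture. Using $\P_{\beta,\bQ,\bmu_S(A)}(\Omega_n)=\E_{\beta,\bQ,\mathbf{0}}[e^{A\sum_{i\in S}X_i}\mathbf{1}_{\Omega_n}]/M_S(A)$, the conditional chi-square simplifies to the same form with $M_S(A)$ replaced by $\tilde M_S(A):=\tilde{\E}_0[\exp(A\sum_{i\in S}X_i)]$, whose $A\to\infty$ limit is exactly the conditional ratio assumed in (II). The diagonal bound $\P_{\beta,\bQ,\mathbf{0}}(\bX_S=1\mid\Omega_n)\ge 2^{-|S|}$ still follows from FKG for $\P_{\beta,\bQ,\mathbf{0}}$ since both $\{\bX_S=1\}$ and $\Omega_n$ are increasing. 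To transfer the conditional TV bound into a lower bound on the unconditional minimax risk, I would invoke
\begin{align*}
\P_{\beta,\bQ,\mathbf{0}}(T=1)+\P_{\pi_A}(T=0)\ge \min\bigl(\P_{\beta,\bQ,\mathbf{0}}(\Omega_n),\P_{\pi_A}(\Omega_n)\bigr)\bigl[\tilde{\P}_0(T=1)+\tilde{\P}_{\pi_A}(T=0)\bigr]
\end{align*}
together with $\P_{\pi_A}(\Omega_n)\ge\P_{\beta,\bQ,\mathbf{0}}(\Omega_n)\ge\kappa$ by FKG (since $\bmu\ge 0$ raises the probability of the increasing event $\Omega_n$).

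The hardest step I anticipate is the monotonicity-in-$A$ for the conditional ratio: conditioning on an increasing event need not preserve FKG in general (e.g., i.i.d.\ Bernoullis conditioned on $\sum_i X_i\ge 1$ become negatively associated), so the derivative argument from part (I) does not port directly. I plan to bypass this by writing the conditional ratio as the unconditional ratio times the correction factor $\P_{\beta,\bQ,\mathbf{0}}(\Omega_n)\P_{\beta,\bQ,\bmu_{S_1\cup S_2}(A)}(\Omega_n)/[\P_{\beta,\bQ,\bmu_{S_1}(A)}(\Omega_n)\P_{\beta,\bQ,\bmu_{S_2}(A)}(\Omega_n)]$ and controlling each factor using unconditional FKG on the positive-field tilted measures, where FKG continues to hold.
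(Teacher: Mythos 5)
Your part (I) is essentially the paper's own argument: the same uniform prior over the disjoint sub-collection $\mathcal{C}_n'$, the same Griffiths/GKS monotonicity in $A$ used to replace the second moment by its $A\to\infty$ limit, the same FKG-plus-spin-flip-symmetry bound $\P_{\beta,\bQ,\mathbf{0}}(\bX_S=\mathbf{1})\ge 2^{-s}$ for the diagonal, and the same counting $2^s/|\mathcal{C}_n'|=o(1)$ when $s\le c\log n$ with $c\log 2<C_l$.

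For part (II) you take a genuinely more roundabout route than the paper, and it can be made to work, but one step of it is exactly the paper's proof and you leave it implicit. The paper never conditions the second moment: it keeps the unconditional chi-square bound from part (I) and converts the assumed conditional ratio into an unconditional one directly, via the inclusion $\P_{\beta,\bQ,\mathbf{0}}(\bX_{S_i}=\mathbf{1})\ge \frac{\kappa}{2}\,\P_{\beta,\bQ,\mathbf{0}}(\bX_{S_i}=\mathbf{1}\mid\Omega_n)$ and the FKG inequality $\P_{\beta,\bQ,\mathbf{0}}(\bX_{S_1}=\mathbf{1},\bX_{S_2}=\mathbf{1})\le \P_{\beta,\bQ,\mathbf{0}}(\bX_{S_1}=\mathbf{1},\bX_{S_2}=\mathbf{1}\mid\Omega_n)$, which give $\E_{\beta,\bQ,\mathbf{0}}L_{\pi(A)}^2\le \frac{4}{\kappa^2}+o(1)$ and hence no asymptotically powerful test. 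Your plan (conditional chi-square with $\tilde M_S(A)=M_S(A)\,\P_{\beta,\bQ,\bmu_S(A)}(\Omega_n)/\P_{\beta,\bQ,\mathbf{0}}(\Omega_n)$, the correction-factor decomposition to dodge the genuine failure of conditional FKG, and the $\min\bigl(\P_{\beta,\bQ,\mathbf{0}}(\Omega_n),\P_{\pi_A}(\Omega_n)\bigr)$ transfer inequality) is sound, and your diagnosis of the conditional-FKG obstruction is correct. However, after your decomposition the off-diagonal conditional term is bounded by the unconditional moment ratio at finite $A$ times roughly $\kappa^{-2}$, and your monotonicity step only bounds that first factor by the \emph{unconditional} probability ratio, which in scenario (II) is not assumed to tend to $1$; to close the argument you still need to bound it by $\kappa^{-2}(1+o(1))$ times the assumed conditional ratio via the same inclusion-plus-FKG step as the paper. ``Controlling each factor by unconditional FKG on the tilted measures'' covers the correction factor but not this. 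Once that step is written out, the conditional detour and transfer inequality become unnecessary (the unconditional bound $\E_{\beta,\bQ,\mathbf{0}}L_{\pi(A)}^2=O(1)$ already rules out powerful tests), which is also why (II) correctly asserts only the weaker conclusion: the chi-square stays bounded by a constant depending on $\kappa$ rather than tending to $1$, so powerlessness is not claimed.
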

Theorem \ref{thm:lower_short_range_small_s} verifies our intuition from Theorem \ref{thm:upper} {\color{black} that no signal can be detected for $s\leq c \log{n}$ for suitably small $c>0$.}  The verification of the conditions on Theorem \ref{thm:lower_short_range_small_s} essentially reduces to establishing versions of ratio scale strong-mixing for Ising models.

\section{\bf Specific Examples}\label{section:Examples} 
In this section we provide examples of coupling strengths $(\beta,\bQ)$ and class of signals $\mathcal{C}_n$ which when looked through the lens of Theorems \ref{thm:upper},  \ref{thm:lower_short_range_large_s}, and  \ref{thm:lower_short_range_small_s} yield matching (in terms of rate) upper and lower bounds. 

\subsection{\bf Mean Field Interactions}\label{sec:mean_field}

In this section we verify the validity and optimality of our upper and lower bounds for some examples of Mean-Field type Ising models. Mean-Field Ising models can be generally characterized by positing conditions on $\bQ$ under which mean-field approximation holds (see e.g. \cite{basak2017universality,jain2018mean} for exact definitions and details). Our results on rate optimal detection boundaries will be verified for some important sub-classes of such mean-field Ising models.  

In this section, throughout, the  matrix $\mathbf{Q}$ will usually be associated with a certain sequence of simple labeled graphs $\mathbb{G}_n=(\mathcal{V}_n,\mathcal{E}_n)$ with vertex set $\mathcal{V}_n=\{1,\dots,n\}$ and edge set $\mathcal{E}_n \subseteq \mathcal{V}_n\times \mathcal{V}_n$ and corresponding $\mathbf{Q}= \mathbf{G}_n/\overline{d}$ where we define the average degree of the graph $\mathbb{G}_n$ to be $\overline{d}=|\mathcal{E}_n|/|\mathcal{V}_n|$. 
Here $\mathbf{G}_n$ is the adjacency matrix of $\mathbb{G}_n$. Also given any square matrix $\mathbf{M}$, we use $\lambda_i(\mathbf{M})$ to denote the $i$-th largest eigenvalue of $\mathbf{M}$.

Since such models have no apparent geometry, there is less restriction on the choice of signal classes $\mathcal{C}_n$. Consequently, in this section we discuss testing against sparse alternatives of size $s$ define by $\Xi(\mathcal{C}_n,s,A)$, where $\mathcal{C}_n$ is any collection of subsets of $\mathcal{V}_n$ of size $s$ such that~\eqref{eq:forupper}~and~\eqref{eq:forlower} hold.

In all the examples to be considered in this subsection, the critical temperature corresponds to $\beta_c=1$ (see e.g. \cite{basak2017universality}), and we demonstrate a double phase transition on the detection boundary in terms of the signal size $s$ at the critical temperature (compared to only one phase transition at $s\sim \log{n}$ for non-critical temperatures), at $s\sim \log{n}$ and $s\sim \sqrt{n}/\log{n}$. In particular, for $s\gtrsim \sqrt{n}/\log{n}$ the behavior of the testing problem changes at the critical temperature, and one is able to detect lower signals using a simple test based on total number of spins.
\\

{\color{black} 

Our first example in this regard is for dense regular graphs. 
 
\begin{theorem}\label{thm:dense_regular}
Suppose $\mathbb{G}_n$ corresponds to a $d_n$-regular graph, which is dense, i.e.  $d_n=\Theta(n)$ and~\eqref{eq:forupper}~and~\eqref{eq:forlower} hold. Then there exists constants $c,C>0$ such that if,
\begin{enumerate}
\item[(a)]
$s\le c\log n$, then the following conclusions hold:
\begin{itemize}
\item
If $\beta\le 1$, all tests are asymptotically powerless for any $A>0$. 

%
\item
If $\beta>1$, no test is asymptotically powerful for any $A>0$, provided $\limsup_{n\rightarrow\infty}\frac{\lambda_2(\mathbf{G}_n)}{d_n}<1$. 
\end{itemize}

\item[(b)]
$s\ge C\log n$, then there exists constants $c', C'>0$ such that the following conclusions hold:
\begin{itemize}
\item
If $\beta<1$, all tests are asymptotically powerless if $\tanh(A)\le c'\sqrt{\frac{\log n}{s}}$. On the other hand, if $\tanh(A)\ge C'\sqrt{\frac{\log n}{s}}$, there is a sequence of asymptotically powerful tests.

\item
Suppose $\beta=1$ and $\limsup\limits_{n\to\infty} \lambda_2(\mathbf{G}_n)/d_n<1$. 

\begin{itemize}
    \item 
If $s\ll \sqrt{n}/\log{n}$, all tests are asymptotically powerless if $\tanh(A)\le c'\sqrt{\frac{\log n}{s}}$, and there is a sequence of asymptotically powerful tests 
if $\tanh(A)\ge C'\sqrt{\frac{\log n}{s}}$. 

\item 
If $s\gtrsim \sqrt{n}/\log{n}$, all tests are asymptotically powerless if $s\tanh(A)\ll n^{1/4}$ and there exists a sequence of asymptotically powerful tests if $s\tanh(A)\gg n^{1/4}$.
\end{itemize}

\item
If $\beta>1$ and $\limsup\limits_{n\to\infty} \lambda_2(\mathbf{G}_n)/d_n<1$, there are no asymptotically powerful tests if $\tanh(A)\le c'\sqrt{\frac{\log n}{s}}$. On the other hand, if $\tanh(A)\ge C'\sqrt{\frac{\log n}{s}}$, there is a sequence of asymptotically powerful tests.

\end{itemize}

\end{enumerate}

\end{theorem}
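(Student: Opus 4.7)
The plan is to deduce Theorem \ref{thm:dense_regular} by instantiating the general upper bounds of Theorems \ref{thm:upper} and \ref{thm:upper_unknown_beta_Q}, the lower bounds of Theorems \ref{thm:lower_short_range_large_s} and \ref{thm:lower_short_range_small_s}, and an extra sum-of-spins test at criticality, then feeding in the correlation and mixing estimates for dense regular graphs that are collected in Section \ref{sec:comres} (in particular Lemmas \ref{lem:slarge} and \ref{lem:altbeh}). A crucial initial observation is that $\mathbb{G}_n$ is $d_n$-regular implies $\overline{d}=d_n$, so $\bQ=\mathbf{G}_n/d_n$ has row sums exactly $1$ and $\|\bQ\|_{\infty\to\infty}=1$; thus the operator-norm hypotheses of all the general theorems are satisfied uniformly in $n$. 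The argument splits naturally into the three temperature regimes $\beta<1$, $\beta=1$, $\beta>1$, with a further split inside each regime according to whether $s\le c\log n$, $C\log n\le s\ll \sqrt{n}/\log n$, or $s\gtrsim \sqrt{n}/\log n$.

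For the upper bounds, in every case \emph{except} $\beta=1$ with $s\gtrsim \sqrt{n}/\log n$, I would invoke Theorem \ref{thm:upper} directly: the scan statistic $L_n$ is asymptotically powerful whenever $\tanh(A)\gtrsim \sqrt{\log n/s}$, since \eqref{eq:forupper} is assumed. For $\beta=1$ with $s\gtrsim \sqrt{n}/\log n$, the scan test is too conservative, and I would instead use the total-spin test $T_n=\sum_{i=1}^n X_i$, rejecting when $|T_n|\ge c\, n^{3/4}\log n$. Under $H_0$ at critical temperature on a dense regular graph with spectral gap $\limsup \lambda_2(\mathbf G_n)/d_n<1$, Lemma \ref{lem:altbeh} gives that $T_n/n^{3/4}$ is tight; under $H_1$ a Ferromagnetic (FKG) comparison with signal $\bmu_S(A)$ shifts $T_n$ by order $s\tanh(A)$, so $s\tanh(A)\gg n^{1/4}$ produces a detectable departure. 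This is precisely the regime where the critical fluctuation scale $n^{3/4}$ beats the scan scale $\sqrt{s\log n}$.

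For the lower bounds with $s\ge C\log n$ I would apply Theorem \ref{thm:lower_short_range_large_s}. For $\beta<1$, use part (I) with $\Omega_n$ trivial; the variance condition reduces to $\mathrm{Var}_{\beta,\bQ,\mathbf{0}}(\sum_{i\in S}X_i)=\Theta(s)$ and the covariance condition to $\mathrm{Cov}_{\beta,\bQ,\mathbf{0}}(X_i,X_j)=o(1/s)$ for $i\in S_1, j\in S_2$ disjoint, both supplied by the high-temperature spin--spin correlation estimates in Lemma \ref{lem:slarge}, yielding $r_n=\Theta(s)$ and $r_n'$ large enough to recover $\tanh(A)\lesssim \sqrt{\log n/s}$. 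For $\beta>1$ (with the spectral gap condition), apply part (II) with $\Omega_n=\{\bar{\bX}\ge 0\}$, the positive pure phase; under $\P_{\beta,\bQ,\bmu_S(2\eta)}(\,\cdot\,|\Omega_n)$ the conditional correlations again decay like $1/n$ by the pure-phase bounds in Section \ref{sec:comres}, and $\P(\Omega_n)\to 1/2$ by symmetry. For $\beta=1$, part (I) with the critical-regime estimate $\mathrm{Var}(\sum_{i\in S}X_i)=\Theta(s\vee s^2/\sqrt{n})$ (from Lemma \ref{lem:slarge}) produces both boundaries, the scan boundary $\sqrt{\log n/s}$ when $s\ll \sqrt n/\log n$ and the summed boundary $n^{1/4}/s$ when $s\gtrsim \sqrt n/\log n$.

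For $s\le c\log n$, the verification of Theorem \ref{thm:lower_short_range_small_s} reduces to a ratio-scale strong-mixing statement: for disjoint $S_1,S_2$ of size $s$,
\[
\frac{\P_{\beta,\bQ,\mathbf 0}(\bX_{S_1}=\mathbf 1,\bX_{S_2}=\mathbf 1)}{\P_{\beta,\bQ,\mathbf 0}(\bX_{S_1}=\mathbf 1)\P_{\beta,\bQ,\mathbf 0}(\bX_{S_2}=\mathbf 1)}\to 1,
\]
possibly after conditioning on $\Omega_n=\{\bar{\bX}\ge 0\}$ when $\beta>1$. I would prove this by a conditioning argument: given the configuration on $[n]\setminus(S_1\cup S_2)$, the spins inside $S_1$ and $S_2$ are conditionally independent (since $\bQ$ has zero intra-$S_j$ weight relative to the bulk for dense graphs), and the induced external fields on the two blocks concentrate around the same value to leading order. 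The spectral gap hypothesis ensures that the magnetization $\bar{\bX}$ concentrates sharply (Gaussian-scale for $\beta\ne 1$, $n^{1/4}$-scale for $\beta=1$), which is what drives the asymptotic factorization.

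The main obstacle I anticipate is the sharp analysis at $\beta=1$: both the upper bound via $T_n$ and the lower bound via Theorem \ref{thm:lower_short_range_large_s}(I) hinge on knowing that $T_n/n^{3/4}$ has a non-degenerate limit and on a matching variance bound $\mathrm{Var}(\sum_{i\in S}X_i)\asymp s+s^2/\sqrt n$. These are exactly the non-Gaussian critical-fluctuation statements for mean-field Ising models on dense regular graphs with spectral gap, and the bulk of the technical work lives in Lemma \ref{lem:altbeh} and Lemma \ref{lem:slarge}. A secondary obstacle is handling $\beta>1$ cleanly: the conditional correlation and mixing estimates under $\bar{\bX}\ge 0$ require the spectral gap hypothesis $\limsup \lambda_2(\mathbf G_n)/d_n<1$ to rule out pathological near-disconnected regular graphs where additional phases could persist.
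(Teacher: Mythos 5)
Your overall architecture coincides with the paper's: Theorem \ref{thm:upper} for the scan upper bounds, Theorem \ref{thm:lower_short_range_large_s} (part (I) for $\beta\le 1$, part (II) with $\Omega_n=\{\bar{\bX}\geq 0\}$ for $\beta>1$) fed by the correlation bounds of Lemma \ref{lem:slarge} with $r_n\asymp s$ (resp. $s+s^2/\sqrt n$ at criticality) and $r_n'\asymp s/\log n$, and Theorem \ref{thm:lower_short_range_small_s} for $s\le c\log n$ via the conditional factorization of $\P_{\beta,\bQ,\mathbf 0}(\bX_S=\mathbf a)$, which is exactly what Lemma \ref{lem:smallinsig} supplies (your conditioning-on-the-bulk sketch is the same mechanism, since the intra-block couplings are $O(s^2/n)=o(1)$).

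The genuine gap is in the upper bound at $\beta=1$, $s\gtrsim \sqrt n/\log n$. First, Lemma \ref{lem:altbeh} is a statement about the \emph{alternative} ($\PG$ with $\bmu\in\Xi(\mathcal C_n,s,A)$), not about null tightness of $T_n/n^{3/4}$; the type I error control must come from a separate null fluctuation result (the paper invokes \cite[Theorem 1.3]{Deb2020} for $\PZ(n^{1/4}\bar{\bX}\geq k_n)\to 0$). Second, and more seriously, your power argument via an FKG ``shift of $T_n$ by order $s\tanh(A)$'' cannot work: under the null $T_n$ fluctuates on the scale $n^{3/4}$, so a linear mean shift of size $s\tanh(A)\gg n^{1/4}$ is negligible compared to the null spread, and would only separate the hypotheses if $s\tanh(A)\gg n^{3/4}$. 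The reason detection is possible already at $s\tanh(A)\gg n^{1/4}$ is the \emph{nonlinear} critical response quantified in Lemma \ref{lem:altbeh}: roughly $\bar{\bX}^3\asymp \frac{1}{n}\sum_i\mu_i$, so $n^{1/4}\bar{\bX}\gtrsim k_n:=(n^{-1/4}s\tanh A)^{1/3}\to\infty$ under the alternative. Correspondingly, your rejection threshold $c\,n^{3/4}\log n$ is too large: it only yields power when $(n^{-1/4}sA)^{1/3}\gtrsim\log n$, i.e. $sA\gtrsim n^{1/4}(\log n)^3$, missing part of the claimed regime; the paper rejects when $n^{1/4}\bar{\bX}\geq \delta k_n$ with this slowly diverging $k_n$. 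With the test and its analysis replaced by that argument (and the null tail taken from the cited result), the rest of your proposal matches the paper's proof.
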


Theorem \ref{thm:dense_regular}, which includes the classical Curie-Weiss Model as a special case (corresponding to $d_n=n-1$), demonstrates the benefit of critical temperature in detecting lower signals for $s\gtrsim \sqrt{n}/\log{n}$. In particular, for any $\beta\neq \beta_c$, the detection thresholds resemble that of $\beta=0$ (i.e. independent observations) and only for $\beta=\beta_c$ one can detect lower signals -- and that too only when the number of signals is large enough. 

The assumption of denseness of the regular graph can be removed under randomness. In particular, a similar result holds for sparser but random regular graphs. We state this in our next result.

\begin{theorem}\label{thm:random_regular}
Suppose $\mathbb{G}_n$ is the adjacency matrix of a $d_n$ random regular graph, with 
\[\theta:=\liminf_{n\rightarrow\infty}\frac{\log d_n}{\log n}\in [0,1]\]
and~\eqref{eq:forupper},~\eqref{eq:forlower} hold. Then there exists fixed constants $c,C>0$ such that if,
\begin{enumerate}
\item[(a)]
$s\le c\log n$, then the following conclusions hold:
\begin{itemize}
\item
If $\beta\le 1$, all tests are asymptotically powerless for any $A$, provided $\theta>0$.

%
\item
If $\beta>1$, no test is asymptotically powerful for any $A$, provided $\theta>0$.
\end{itemize}

\item[(b)]
$s\ge C\log n$, then there exists constants $c',C'>0$ such that the following conclusions hold:
\begin{itemize}
\item
If $\beta<1$, all tests are asymptotically powerless if $\tanh(A)\le c'\sqrt{\frac{\log n}{s}}$ and $\theta>1/2$. On the other hand, if $\tanh(A)\ge C'\sqrt{\frac{\log n}{s}}$, there is a sequence of asymptotically powerful tests for any $\theta \geq 0$.

\item
Suppose $\beta=1$. 

\begin{itemize}
    \item 
If $s\ll \sqrt{n}/\log{n}$, all tests are asymptotically powerless if $\tanh(A)\le c'\sqrt{\frac{\log n}{s}}$, $\theta>1/2$ and there is a sequence of asymptotically powerful tests if $\tanh(A)\ge C'\sqrt{\frac{\log n}{s}}$ and $\theta\geq 0$. 

\item 
If $s\gtrsim \sqrt{n}/\log{n}$ and $\theta>1/2$, then all tests are asymptotically powerless if $s\tanh(A)\ll n^{1/4}$, and there exists a sequence of asymptotically powerful tests if $s\tanh(A)\gg n^{1/4}$.
\end{itemize}

\item
If $\beta>1$, there are no asymptotically powerful tests if $\tanh(A)\le c'\sqrt{\frac{\log n}{s}}$ and $\theta>2/3$. On the other hand, if $\tanh(A)\ge C'\sqrt{\frac{\log n}{s}}$, there is a sequence of asymptotically powerful tests for any $\theta \geq 0$.

\end{itemize}

\end{enumerate}

\end{theorem}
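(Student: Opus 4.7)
\textbf{Plan for Theorem \ref{thm:random_regular}.} The result parallels Theorem \ref{thm:dense_regular}, and the plan is to reuse the same scaffolding: the abstract upper-bound results in Section \ref{section:upper} for all the testability claims, and the abstract lower-bound results in Section \ref{section:lower} for the powerless/non-powerful claims. The key new inputs one must supply, beyond what was used in the dense-regular case, are spin-spin correlation and mixing bounds for the Ising model on a random $d_n$-regular graph in the high-, critical- and low-temperature regimes, with the required strength of each bound governed by the sparsity parameter $\theta = \liminf \log d_n / \log n$.

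\textbf{Upper bounds.} For any $d_n$-regular graph (random or not) and $\bQ = \mathbf{G}_n/d_n$, regularity gives $\|\bQ\|_{\infty \to \infty}=1$, so Theorem \ref{thm:upper} applies across all $\beta$ and yields an asymptotically powerful scan-based test whenever $\tanh(A) \ge C'\sqrt{\log n/s}$, for any $\theta \ge 0$. This covers every ``asymptotically powerful'' assertion except the improved boundary at criticality with $s \gtrsim \sqrt{n}/\log n$. For that case I would use a test based on the total magnetization $\sum_{i=1}^n X_i$: under the null at $\beta=1$ the sum has fluctuations of order $n^{3/4}$ (the classical critical scale), while under $\bmu_S(A)$ it picks up a mean shift of order $s\tanh(A)$ by a monotone-coupling/Griffiths-style comparison, so the test rejects reliably whenever $s\tanh(A) \gg n^{1/4}$. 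The needed mean-shift and fluctuation estimates are packaged in Lemma \ref{lem:altbeh}.

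\textbf{Lower bounds.} Here I would invoke Theorem \ref{thm:lower_short_range_small_s} for $s \le c\log n$ and Theorem \ref{thm:lower_short_range_large_s} for $s \ge C\log n$, taking $\Omega_n = \{\bar{X} \ge 0\}$ in the low-temperature regime (by symmetry of the null law $\liminf \P_{\beta,\bQ,\mathbf{0}}(\Omega_n) = 1/2$). The hypotheses of these theorems reduce to: (i) a ratio-mixing bound $\P(\bX_{S_1}=1,\bX_{S_2}=1)/[\P(\bX_{S_1}=1)\P(\bX_{S_2}=1)] \to 1$ uniformly over disjoint $S_1,S_2 \in \mathcal{C}_n'$ of size $s$; (ii) variance bounds of the form $\mathrm{Var}(\sum_{i\in S} X_i) = O(s)$; and (iii) a covariance-decay bound $\mathrm{Cov}(\sum_{i\in S_1}X_i, \sum_{i\in S_2}X_i) = o(r_n')$, each in its appropriate conditional form in the low-temperature case. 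On a random regular graph these estimates are obtained from the spin-correlation and sum-of-spins analyses carried out in Section \ref{sec:comres} (Lemma \ref{lem:slarge} and its relatives), by combining the local tree-like geometry of the random regular graph with the second-eigenvalue control $\lambda_2(\mathbf{G}_n)/d_n = o(1)$ à la Friedman.

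\textbf{Main obstacle.} The hard part is the probabilistic work behind the correlation estimates in the sparse regime, and it is what pins down the particular thresholds $\theta > 0$, $\theta > 1/2$, $\theta > 2/3$ in the statement. Concretely: $\theta>0$ is what lets a random $d_n$-regular graph locally approximate a tree and gives the ratio-mixing bound underlying the small-$s$ lower bound; $\theta>1/2$ is the sharpest sparsity under which pairwise interactions between disjoint subsets of size $s \gg \log n$ remain negligible at high temperature and criticality, where $r_n \asymp s$ and, at $\beta=1$, $r_n' \asymp s^2/\sqrt n$; and the more stringent $\theta>2/3$ in the low-temperature regime absorbs the loss from conditioning on the half-space $\Omega_n$, which inflates both the variance and cross-covariance bounds that feed Theorem \ref{thm:lower_short_range_large_s}(II). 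These mixing, spin-spin correlation, and critical sum-of-spin estimates for mean-field Ising models are precisely the novel technical content collected in Section \ref{sec:comres}, and the proof of Theorem \ref{thm:random_regular} is essentially a case-by-case verification that their outputs satisfy the hypotheses of Theorems \ref{thm:upper}, \ref{thm:lower_short_range_large_s}, and \ref{thm:lower_short_range_small_s}.
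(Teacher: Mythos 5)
Your plan has the same skeleton as the paper's proof: reduce everything to Theorems \ref{thm:upper}, \ref{thm:lower_short_range_large_s} and \ref{thm:lower_short_range_small_s} (with $\Omega_n=\{\bar{\bX}\ge 0\}$ at low temperature), use $\|\bQ\|_{\infty\to\infty}=1$ and Friedman's second-eigenvalue bound, and handle the critical regime $s\gtrsim \sqrt{n}/\log n$ with the total-magnetization test via Lemma \ref{lem:altbeh}. The paper in fact first disposes of $d_n=\Theta(n)$ by citing Theorem \ref{thm:dense_regular} and then treats $d_n=o(n)$, which is where all the new work sits.

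The genuine gap is in how you propose to get the correlation inputs in the sparse regime. The paper does not use local tree-likeness of the random regular graph at all; what it actually needs, and proves, are two graph-level events holding with probability $1-o(1)$: (i) $\max_{i,j}(\bQ^3)_{ij}\lesssim (\log (n/d_n))^3/n$, and (ii) for every vertex set $S$ of size $2s$, the number of edges inside $S$ is $\lesssim s^2 p_n$ with $p_n\asymp d_n\log(n/d_n)/n$. Both are obtained by stochastically sandwiching the random regular graph inside an Erd\H{o}s--R\'enyi graph and applying Chernoff plus union bounds (events $D_n$ and $E_n$), together with a case split $s\ll d_n/\log n$ versus $s\gtrsim d_n/\log n$; without (ii) the cross-covariance between two candidate signal sets $S_1,S_2$ coming from edges joining them cannot be pushed below $o(s/\log n)$, which is exactly what Theorem \ref{thm:lower_short_range_large_s} demands with $r_n'=s/\log n$, and a tree-approximation argument gives no such uniform-over-$\binom{n}{2s}$ control. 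Relatedly, the thresholds $\theta>1/2$ and $\theta>2/3$ do not come from conditioning losses in the abstract lower bound but from the error terms in Lemma \ref{lem:slarge}: the unconditional non-edge correlation bound carries an $\alpha_n^4=(\log n/\bar d)^2$ term (forcing $\bar d\gg\sqrt n$, i.e.\ $\theta>1/2$), while the conditional bound at low temperature only gives $\alpha_n^3$ (forcing $\bar d\gtrsim n^{2/3}$, i.e.\ $\theta>2/3$), and the small-$s$ mixing bounds of Lemma \ref{lem:smallinsig} need only $\bar d$ polylogarithmic, whence $\theta>0$. So the architecture of your plan is right, but the random-graph estimates you defer to "local tree-like geometry plus $\lambda_2$ control" are the actual content of the proof and would need to be replaced by the sandwiching/edge-count argument (or an equivalent) to make the verification of the hypotheses of Theorems \ref{thm:lower_short_range_large_s} and \ref{thm:lower_short_range_small_s} go through.
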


It is intuitive that the results for random regular graphs should naturally extend to suitable Erd\H{o}s-R\'{e}nyi graphs as well. This intuition is indeed correct -- as verified by our next result.

\begin{theorem}\label{thm:erdos_renyi}
Suppose $\mathbb{G}_n$ is the adjacency matrix of an Erd\H{o}s-R\'{e}nyi random graph with parameter $p_n$, such that
\[\theta:=\liminf_{n\rightarrow\infty}\frac{\log (np_n)}{\log n}\in [0,1]\]
as before and~\eqref{eq:forupper},~\eqref{eq:forlower} hold. Then the same conclusions hold as in Theorem~\ref{thm:random_regular} except that every occurrence of the condition $\theta\geq 0$ in part (b) of Theorem~\ref{thm:random_regular} is replaced with $\theta>0$.

\end{theorem}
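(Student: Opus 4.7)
The plan is to reduce the Erd\H{o}s-R\'{e}nyi case to the random regular case of Theorem~\ref{thm:random_regular} by exploiting concentration of the graph structure. When $\theta>0$ we have $np_n\to\infty$ polynomially, so standard binomial concentration gives that for all vertices simultaneously, $\mathrm{deg}_i=np_n(1+o(1))$ with probability $1-o(1)$, and also $\bar{d}=|\mathcal{E}_n|/n=np_n(1+o(1))/2$. Therefore each row sum of $\mathbf{Q}=\mathbf{G}_n/\bar{d}$ is $2(1+o(1))$, so $\|\mathbf{Q}\|_{\infty\to\infty}\le C_u'$ holds on a high-probability event $\mathcal{G}_n$ of the graph. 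All subsequent analysis is carried out on this ``good'' graph event, and the conclusions transfer to the annealed probability with $o(1)$ loss. The role of $\theta>0$ is precisely to ensure $\P(\mathcal{G}_n)=1-o(1)$; when $\theta=0$, i.e.\ $np_n=O(1)$, there are vertices of zero/bounded degree, and row sums of $\mathbf{Q}$ are not uniformly bounded, so the scan test machinery breaks down.

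For the upper bounds in part (b), I would apply Theorem~\ref{thm:upper} on $\mathcal{G}_n$ to obtain the scan-test boundary $\tanh(A)\gtrsim\sqrt{\log n/s}$ for all three subcases, which is exactly the random regular bound but now requires $\theta>0$. For the improved upper bound at $\beta=1$, $s\gtrsim\sqrt{n}/\log n$, I would use the test based on $\sum_i X_i$ as in the corresponding part of Theorem~\ref{thm:random_regular}. The limit law for $n^{-3/4}\sum_i X_i$ at criticality for mean-field Ising models depends on the graph only through the top eigenvalue structure of $\mathbf{Q}$, and for Erd\H{o}s-R\'enyi with $\theta>0$, the relevant spectral estimates coincide with those of the complete graph up to vanishing error, so that Lemma~\ref{lem:altbeh} applies and yields the threshold $s\tanh(A)\gg n^{1/4}$.

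The lower bounds in part (b) require verifying the variance/covariance hypotheses of Theorem~\ref{thm:lower_short_range_large_s} (and the ratio-mixing hypotheses of Theorem~\ref{thm:lower_short_range_small_s} in part (a)) for the Erd\H{o}s-R\'enyi Ising model. Once restricted to $\mathcal{G}_n$, the results of Section~\ref{sec:comres} on spin-spin correlations and mixing for mean-field type Ising models can be invoked directly, giving $\mathrm{Var}_{\beta,\mathbf{Q},\mathbf{0}}(\sum_{i\in S}X_i)=\Theta(s)$ for $\beta\ne 1$ and comparable estimates with $r_n'\sim s^2/\sqrt n$ for $\beta=1$ (this is what gives the $n^{1/4}/s$ term). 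The conditions $\theta>1/2$ (for $\beta\le 1$) and $\theta>2/3$ (for $\beta>1$) inherited from Theorem~\ref{thm:random_regular} arise from the error terms needed to control the covariance between sums over two disjoint signal sets -- these error terms depend on the fluctuations of the graph structure and vanish only once $np_n$ is large enough relative to the scale at which the relevant correlations decay. At criticality the lower bound uses the conditional variance under a tilted measure on an increasing symmetry-breaking event, essentially identical to the random regular argument.

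The main obstacle I anticipate is at $\beta=1$ in the regime $s\gtrsim\sqrt{n}/\log n$, where both upper and lower bounds depend on the precise asymptotic distribution of $n^{-3/4}\sum_i X_i$. At criticality the magnetization is highly sensitive to perturbations of $\mathbf{Q}$, and the binomial fluctuations of degrees contribute a rank-one perturbation of order $n^{-1/2}$ to $\mathbf{Q}$; one must check that this does not shift the limit law. The cleanest route is a quenched comparison: on $\mathcal{G}_n$ one compares the Erd\H{o}s-R\'enyi Ising measure with the Curie-Weiss measure using the concentration of $\bX^\top\mathbf{Q}\bX$ around $\bX^\top(\mathbf{1}\mathbf{1}^\top-\mathbf{I})\bX/(n-1)$, which requires quantitative spectral control and is precisely the content of the supporting lemmas in Section~\ref{sec:comres}. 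Once this comparison is in hand, the rest of the argument is a careful bookkeeping of the random regular proof.
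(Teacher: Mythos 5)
Your proposal follows essentially the same route as the paper: reduce to the random regular case (Theorem~\ref{thm:random_regular}) by showing that an Erd\H{o}s--R\'enyi graph with $\theta>0$ is ``approximately regular'' on a high-probability graph event, so that $\|\bQ\|_{\infty\to\infty}=O_p(1)$ and the mean-field lemmas of Section~\ref{sec:comres} apply verbatim; you also correctly identify $\theta>0$ as exactly what is needed for the scan-test upper bound (row sums of $\bQ$ are no longer deterministically bounded, unlike the regular case).

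Two verification steps are left unsupported as written. First, the critical-temperature lemmas (Lemma~\ref{lem:slarge}(c), Lemma~\ref{lem:mcontrol}(b),(c), Lemma~\ref{lem:altbeh}) require the quantitative rate $\max_{i\in[n]}|d_i/\overline{d}-1|=O_p\bigl(\sqrt{\log n/\overline{d}}\,\bigr)$, not merely $d_i=np_n(1+o(1))$; the paper obtains this from a Chernoff bound on $\mathrm{Bin}(n-1,p_n)$ plus a union bound over vertices (and the same for $\overline{d}$), so your statement needs this upgrade, which is routine but necessary. Second, you need the spectral condition $\max\{|\lambda_2(\bQ)|,|\lambda_n(\bQ)|\}=o_p(1)$ (hence \eqref{eq:connected} and the hypothesis of Lemma~\ref{lem:mcontrol}(b)(ii) and Lemma~\ref{lem:altbeh}); this is \emph{not} ``the content of the supporting lemmas in Section~\ref{sec:comres}'' --- those lemmas assume such spectral control as a hypothesis --- but an external random-graph eigenvalue estimate, which the paper imports from~\cite[Theorem 1.1]{Feige2005} (valid since $np_n\gtrsim n^{\theta-o(1)}\gg\log n$ when $\theta>0$). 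With those two inputs supplied, your quenched argument on the good graph event is exactly the paper's proof; no separate comparison with the Curie--Weiss measure is needed, since Lemmas~\ref{lem:mcontrol} and~\ref{lem:altbeh} are already stated for approximately regular graphs with a spectral gap.
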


A summary of the detection boundary for mean field Ising models is given in the tree in Figure \ref{fig:SummaryTree}. Even though all the transitions happen at a constant level, we remove all constants to make the results more transparent.

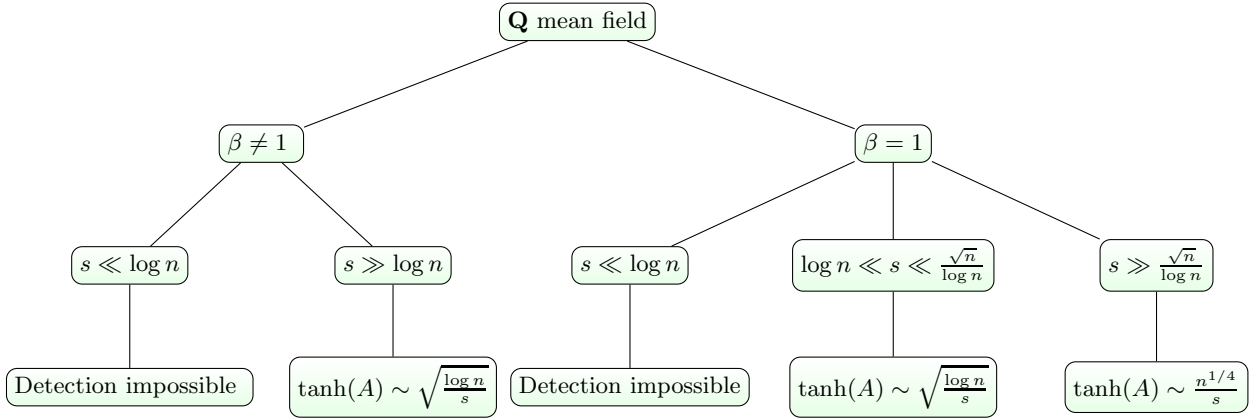
\begin{figure}
\begin{center}
\begin{tikzpicture}[level distance=1.62cm,
			level 1/.style={sibling distance=8.4cm},
			level 2/.style={sibling distance=3.5cm},
  every node/.style = {shape=rectangle, rounded corners,
    draw, align=center,
    top color=white, bottom color=green!10}]]
  \node {$\bQ$ mean field}
    child { node {$\beta\ne 1$ } 
      child{ node{$s\ll \log n$} child {node {Detection impossible  }}}
    child { node {$s\gg \log n$}child{node{ $\tanh(A)\sim \sqrt{\frac{\log n}{s}}$}}} } child{ node {$\beta=1$} child{node{$s\ll \log n$} child{node{Detection impossible}}} child{node{$\log n\ll s\ll \frac{\sqrt{n}}{\log n}$} child{node { $\tanh(A)\sim \sqrt{\frac{\log n}{s}}$}}}child{node{$s \gg \frac{\sqrt{n}}{\log n}$} child{node{$\tanh(A)\sim \frac{n^{1/4}}{s}$}}}};
\end{tikzpicture}
\caption{Summary of detection boundary in Mean field models.}
\label{fig:SummaryTree}
\end{center}
\end{figure} 
The proofs of the theorems above, mostly relies on verifying the conditions of Theorems \ref{thm:upper},  \ref{thm:lower_short_range_large_s}, and  \ref{thm:lower_short_range_small_s} for the respective graphs. Only for $\beta=\beta_c=1$ and $s\gtrsim \sqrt{n}/\log{n}$, the optimal upper bound does not follow from Theorem \ref{thm:upper}. In this case the optimal test is not based on a scan test but rather simply on the total magnetization $\sum_{i=1}^n X_i$. The sharp analysis of the test based $\sum_{i=1}^n X_i$ requires several additional technical details. We develop the necessary ingredients in Section \ref{sec:comres}.

\subsection{\bf Short Range Interactions}\label{section:lattice}
 Indeed, the most classical example of an Ising Model corresponds to nearest neighbor interactions on a lattice in dimension $d$ \citep{ising1925beitrag,onsager1944crystal}. To introduce this model it is convenient to rewrite the vertices of the graph as the vertices of a lattice as follows. Given positive integer $d$, consider a growing sequence of integer lattice hypercubes of dimension $d$ defined by  $\Lambda_n(d):=[-n^{1/d},n^{1/d}]^d\cap \mathbb{Z}^d$, $n\geq 1$, where $\mathbb{Z}^d$ denotes the d-dimensional integer lattice. For any two distinct elements $i,j\in \Lambda_n(d)$ we put a weight $\mathbf{Q}_{ij}$, with the restriction that $\mathbf{Q}_{ij}=\mathbf{Q}_{ji}$. Thus $\mathbf{Q}$ is a symmetric array with zeros on the diagonal. With this notation, we say $\bQ$ is short range if there exists $L\geq 1$ such that $\bQ_{ij}=\bQ_{ij}(\Lambda_n(d),L)=\I(0<\|i-j\|_1\leq L)$. Since such a model has an inherent geometry given by the lattic structure in $d$-dimensions, it is natural to consider signals which can be described by such geometry. Similar to one of the emblematic cases considered in \cite{arias2005near,arias2011detection,walther2010optimal,butucea2013detection,konig2020multidimensional}, here we discuss testing against block sparse alternatives of size $s$ define by $\Xi(\mathcal{C}_n,s,A)$ with
 \begin{align}
\mathcal{C}_n=\left\{\prod\limits_{j=1}^d[a_j:b_j]\cap \Lambda_n(d): \ b_j-a_j=\lceil s^{1/d}\rceil\right\}.\label{eqn:cube_signals}
 \end{align}
 Although we only present the results for sub-cube detection in this paper, one can easily extend the results to detection of thick clusters (see \cite{arias2011detection} for details) with minor modifications of the arguments presented here.

	We now argue that the conditions of Theorems \ref{thm:lower_short_range_large_s} and \ref{thm:lower_short_range_small_s} hold right up to the critical temperature in such model and class of signals problem pair and therefore we have sharp matching lower bounds corresponding to the upper bounds presented after Theorem \ref{thm:upper}. In the following analyses, we let $\beta_c(d,L)$ denote the critical temperature of an Ising model with $\mathcal{Q}=\bQ(\Lambda_{n}(d),L)$ in \eqref{eq:critical}. Although analytic forms of $\beta_c(d,L)$ are intractable for $d\geq 3$, the existence of such critical temperatures has been classically studied -- see e.g. \cite{Ellis_Newman,duminil2017lectures,friedli2017statistical} for more details.
	\begin{theorem}\label{thm:lattice}
	Let $\bX \sim \P_{\beta,\bQ,\bmu}$ with  $\bQ_{ij}=\I(0<\|i-j\|_1\leq L)$ for $i,j\in \Lambda_n(d)$ and $0\leq \beta<\beta_c(d,L)$,  and consider testing \eqref{eqn:sparse_hypo} with $\mathcal{C}_n$ as in 
	\eqref{eqn:cube_signals}. Then there exists  positive constants 
	$c,C>0$ depending on $\beta,L,d$ such that the following hold.
	\begin{enumerate}
		\item[(a)] Suppose $s\leq c\log{n}$. Then all tests are asymptotically powerless irrespective of $A$.
		\item[(b)] Suppose $s\geq C\log{n}$. Then there exists constants $c',C'>0$ such that if
		 $\tanh(A)\ge C' \sqrt{\frac{\log{n}}{s}}$, there exists a sequence of asymptotically powerful tests. On the other hand, if $\tanh(A)\le c' \sqrt{\frac{\log{n}}{s}}$, then all tests are asymptotically powerless.		
	\end{enumerate}
	\end{theorem}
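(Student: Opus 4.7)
The plan is to combine the general upper bound of Theorem \ref{thm:upper} with the lower bound results of Theorems \ref{thm:lower_short_range_large_s}(I) and \ref{thm:lower_short_range_small_s}(I). The central technical inputs are two classical facts from statistical physics, both valid throughout the subcritical regime $\beta<\beta_c(d,L)$: the uniform exponential decay of two-point correlations (see e.g.\ Aizenman--Barsky--Fern\'{a}ndez) and a form of complete analyticity that yields strong-mixing/multiplicative-decay estimates (Dobrushin--Shlosman, Martinelli--Olivieri). I would invoke these results rather than reprove them. A preliminary step is to verify the hypotheses of the general theorems. The matrix $\bQ$ has $\|\bQ\|_{\infty\to\infty}\le|\{k\in\mathbb{Z}^d:0<\|k\|_1\le L\}|=O(L^d)$, a constant, and $|\mathcal{C}_n|=O(n)$, confirming \eqref{eq:forupper}. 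For \eqref{eq:forlower} and the lower bounds, I would tile $\Lambda_n(d)$ with disjoint $\lceil s^{1/d}\rceil$-cubes separated by $\ell_1$-distance at least $D$, obtaining a family $\mathcal{C}_n'$ of cardinality $\gtrsim n/(\lceil s^{1/d}\rceil+D)^d$; since $s\le n^{1-\upsilon}$ and we will only take $D=O(\log n)$ below, this yields $\log|\mathcal{C}_n'|\ge C_l\log n$ for a suitable $C_l>0$.

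The upper bound in part (b) is an immediate application of Theorem \ref{thm:upper}. For the lower bound in part (b) I would apply Theorem \ref{thm:lower_short_range_large_s}(I) with exponential decay of correlations,
\[|\mathrm{Cov}_{\beta,\bQ,\mathbf{0}}(X_i,X_j)|\le C e^{-c\|i-j\|_1},\]
uniform in $n$ and $i,j\in\Lambda_n(d)$. Summing gives $\mathrm{Var}_{\beta,\bQ,\mathbf{0}}(\sum_{i\in S}X_i)=O(s)$, so we may take $r_n=\Theta(s)\ge C\log n$. For disjoint cubes $S_1,S_2$ at pairwise distance at least $D$, a further summation yields
\[\sup_{S_1\ne S_2\in\mathcal{C}_n'}\mathrm{Cov}_{\beta,\bQ,\mathbf{0}}\Big(\sum_{i\in S_1}X_i,\sum_{j\in S_2}X_j\Big)=O\bigl(s\,D^{d-1}e^{-cD}\bigr).\]
Taking $D$ of order $\log\log n$ (or $\log n$) together with $r_n'$ a slowly divergent sequence dominated by a constant multiple of $s/\log n$ validates both $r_n'\to\infty$ and $\sup\mathrm{Cov}=o(r_n')$, while keeping $\sqrt{1/r_n'}$ of the order $\sqrt{(\log n)/s}$ up to constants; the matching lower bound follows.

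For part (a) I would apply Theorem \ref{thm:lower_short_range_small_s}(I), whose ratio-scale mixing hypothesis
\[\sup_{S_1\ne S_2\in\mathcal{C}_n'}\Big|\tfrac{\P_{\beta,\bQ,\mathbf{0}}(\bX_{S_1}=\mathbf{1},\bX_{S_2}=\mathbf{1})}{\P_{\beta,\bQ,\mathbf{0}}(\bX_{S_1}=\mathbf{1})\,\P_{\beta,\bQ,\mathbf{0}}(\bX_{S_2}=\mathbf{1})}-1\Big|\to 0\]
I would establish by combining the additive complete-analyticity bound
\[\bigl|\P_{\beta,\bQ,\mathbf{0}}(\bX_{S_1}=\mathbf{1},\bX_{S_2}=\mathbf{1})-\P_{\beta,\bQ,\mathbf{0}}(\bX_{S_1}=\mathbf{1})\P_{\beta,\bQ,\mathbf{0}}(\bX_{S_2}=\mathbf{1})\bigr|\le C|S_1||S_2|e^{-c\,\mathrm{dist}(S_1,S_2)}\]
with the uniform lower bound $\P_{\beta,\bQ,\mathbf{0}}(\bX_S=\mathbf{1})\ge e^{-C''s}$ obtained from sequential single-site conditional probabilities, which are uniformly bounded below since $\beta\|\bQ\|_{\infty\to\infty}$ is bounded. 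Reducing the ratio correction to $o(1)$ then amounts to requiring $\mathrm{dist}(S_1,S_2)\ge C_1 s$ for a sufficiently large constant $C_1$; since $s\le c\log n$, the choice $D=C_1 s=O(\log n)$ is consistent with $\log|\mathcal{C}_n'|\ge C_l\log n$, and the argument is complete. The main obstacle, in my view, lies precisely in this promotion from additive to ratio-scale mixing uniformly over $\mathcal{C}_n'$: one must dominate the exponentially small marginal $\P(\bX_S=\mathbf{1})$ by the pairwise-separation decay, which is where subcriticality $\beta<\beta_c(d,L)$ (as opposed to mere two-point correlation decay) and its finite-volume form via complete analyticity become essential.
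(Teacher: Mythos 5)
Your treatment of the $s\ge C\log n$ regime is essentially the paper's: the upper bound is a direct application of the scan test of Theorem \ref{thm:upper} (with $\|\bQ\|_{\infty\to\infty}$ a constant and $\log|\mathcal{C}_n|\lesssim\log n$), and the lower bound feeds exponential decay of two-point correlations into Theorem \ref{thm:lower_short_range_large_s}(I) with $r_n=\Theta(s)$; the paper simply separates the disjoint cubes in $\mathcal{C}_n'$ by $4s^{1/d}$ rather than by a logarithmic distance, which is immaterial.

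The genuine issue is in part (a), in the source of your additive mixing estimate. You treat complete analyticity / strong spatial mixing (Dobrushin--Shlosman, Martinelli--Olivieri) as a classical fact valid throughout $0\le\beta<\beta_c(d,L)$. That is only established in $d=2$ (where weak mixing implies strong mixing for squares); for $d\ge 3$ strong spatial mixing up to $\beta_c$ is not known, and only the Dobrushin-uniqueness (very high temperature) regime is covered, so your argument as written does not give the theorem for $d\ge3$ and $\beta$ near $\beta_c(d,L)$. What does hold throughout the subcritical phase, and what the paper uses instead, is the Edwards--Sokal/random-cluster route: conditionally on the FK event $\{S_1\not\leftrightarrow S_2\}$ the spins on the two blocks are independent, and sharpness of the subcritical phase gives $\tilde{\P}(S_1\leftrightarrow S_2)\le \exp(-\rho\log n)$ once the blocks in $\mathcal{C}_n'$ are separated by $\log n$; combined with the FKG bound $\P_{\beta,\bQ,\mathbf{0}}(\bX_S=\mathbf{1})\ge 2^{-s}$ this delivers precisely your additive-to-ratio promotion with error of order $\exp(-\rho\log n+s\log 4)=o(1)$ when $s\le c\log n$. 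So your reduction (additive covariance bound for the two block events, divided by marginals bounded below exponentially in $s$) is sound, but the additive input must be derived from the FK coupling (or an equivalent correlation inequality), not from complete analyticity. A secondary, easily repaired point: with separation $D=C_1s$ and $s$ bounded, the supremum over pairs in your ratio-mixing condition is merely small, not $o(1)$; take $D$ diverging (e.g.\ $D=\log n$, as in the paper), which is still compatible with $\log|\mathcal{C}_n'|\gtrsim\log n$.
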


We note that at the critical point $\beta=\beta_c(d,L)$ we do not expect Theorem~\ref{thm:lattice} to hold, and the detection boundary to be lower (see the discussion in \cite{mukherjee2019testing} for heuristics in this regard). 

	We conclude this example by considering the case of one dimensional Ising Model i.e. $d=1$. This is the earliest studied Ising model and has $\bQ$ correspond to the adjacency matrix of the line graph on $n$ vertices \citep{ising1925beitrag}. 
	It is well known that the Ising model on the line graph does not exhibit a thermodynamic phase transition i.e. $\beta_c(1,1)=+\infty$ (see e.g. \cite{ising1925beitrag} and  \cite[Section 3.3]{friedli2017statistical}). As an immediate corollary to Theorem \ref{thm:lower_short_range_large_s} and Theorem \ref{thm:lower_short_range_small_s}, we get that the detection boundary remains the same for any $\beta\ge 0$, and is the same as the independent case i.e. $\beta=0$.	
	\begin{corollary}\label{corr:linegraph}
		Let $\bX \sim \P_{\beta,\bQ,\bmu}$ with $d=1$, $\bQ_{ij}=\I(|i-j|=1)$ for $i,j\in \Lambda_n(d)$ and $\beta\in \mathbb{R}^+$,  and consider testing \eqref{eqn:sparse_hypo} with $\mathcal{C}$ as in 
	\eqref{eqn:cube_signals}. Then there exists  positive constants 
	$c,C>0$ depending on $\beta$ such that the following hold.
	\begin{enumerate}
		\item[(a)] Suppose $s\leq c\log{n}$. Then all tests are asymptotically powerless irrespective of $A$.
		
		\item[(b)] Suppose $s\geq C\log{n}$. Then there exists constants $c',C'>0$ such that if
		 $\tanh(A)\ge C' \sqrt{\frac{\log{n}}{s}}$, there exists a sequence of asymptotically powerful tests. On the other hand, if $\tanh(A)\le c' \sqrt{\frac{\log{n}}{s}}$, then all tests are asymptotically powerless.
		
	\end{enumerate}
	\end{corollary}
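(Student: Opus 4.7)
The upper bound in part (b) is an immediate consequence of Theorem~\ref{thm:upper}: the line-graph coupling $\bQ_{ij}=\I(|i-j|=1)$ has $\|\bQ\|_{\infty\to\infty}=2$ and $|\mathcal{C}_n|\le 2n$, so both structural assumptions of Theorem~\ref{thm:upper} are satisfied, and the scan test is asymptotically powerful as soon as $\tanh(A)\ge C'\sqrt{\log n/s}$.

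For the lower bounds, the plan is to verify the hypotheses of Theorems~\ref{thm:lower_short_range_large_s} and~\ref{thm:lower_short_range_small_s} using the fact that $\beta_c(1,1)=+\infty$, which forces the one-dimensional Ising chain to behave like a uniform high-temperature model at every finite $\beta$. Writing $t:=\tanh\beta\in(-1,1)$ and using the transfer-matrix description of the 1D chain (the non-trivial eigenvalue of the transfer matrix equals exactly $t$, and the spin marginals are uniform by spin-flip symmetry), I would first establish the two uniform-in-$n$ estimates
\begin{equation*}
\bigl|\mathrm{Cov}_{\beta,\bQ,\mathbf{0}}(X_i,X_j)\bigr|\le |t|^{|i-j|},\qquad
\left|\frac{\P_{\beta,\bQ,\mathbf{0}}(\bX_{S_1}=1,\bX_{S_2}=1)}{\P_{\beta,\bQ,\mathbf{0}}(\bX_{S_1}=1)\,\P_{\beta,\bQ,\mathbf{0}}(\bX_{S_2}=1)}-1\right|\lesssim |t|^{d(S_1,S_2)},
\end{equation*}
valid for any two disjoint intervals $S_1,S_2\subset\Lambda_n(1)$ separated by a gap $d(S_1,S_2)\ge 1$, where $\lesssim$ hides a constant depending only on $\beta$.

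With these in hand, for both parts (a) and (b) I would fix $d_n:=\lceil 3\log n/|\log t|\rceil$ and take $\mathcal{C}_n'\subset\mathcal{C}_n$ to be a maximal collection of disjoint length-$s$ intervals with pairwise gaps at least $d_n$. A packing estimate gives $|\mathcal{C}_n'|\gtrsim n/\max(s,\log n)$, so under the standing assumption $s\le n^{1-\upsilon}$ the cardinality condition~\eqref{eq:forlower} is met in both regimes. For part~(a) (where $s\le c\log n$), the ratio-mixing estimate gives $|t|^{d_n}=O(n^{-3})\to 0$, which verifies the hypothesis of Theorem~\ref{thm:lower_short_range_small_s}(I) and delivers powerlessness for all $A$. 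For part~(b) (where $s\ge C\log n$), summing the correlation bound yields $\sup_{S\in\mathcal{C}_n'}\mathrm{Var}_{\beta,\bQ,\mathbf{0}}\bigl(\sum_{i\in S}X_i\bigr)\le C_\beta s=:r_n$, which satisfies $r_n\ge C\log n$ after absorbing $C_\beta$ into $C$; and for any $S_1\ne S_2\in\mathcal{C}_n'$ the cross-covariance is bounded by $\sum_{i\in S_1,j\in S_2}|t|^{|i-j|}\lesssim s|t|^{d_n}=O(sn^{-3})$, which is $o(r_n')$ for any divergent $r_n'\le s/\log n$. Plugging $(r_n,r_n')$ into Theorem~\ref{thm:lower_short_range_large_s}(I) yields the matching lower bound $\tanh(A)\le c'\sqrt{\log n/s}$.

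The main obstacle is the uniform verification of the two displayed estimates at every finite $\beta\in\mathbb{R}^+$. Both are classical consequences of transfer-matrix diagonalization combined with spin-flip symmetry, and the crucial structural input is the absence of a phase transition in 1D, i.e., $\beta_c(1,1)=+\infty$, which keeps the sub-dominant transfer-matrix eigenvalue $t$ strictly inside $(-1,1)$ for every finite $\beta$. Mild endpoint effects of the free-boundary chain can either be absorbed into the implicit constants or handled by conditioning on the boundary spins, so the argument reduces to explicit $2\times 2$ matrix calculations.
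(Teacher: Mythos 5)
Your proposal is correct and reaches the corollary by the same high-level reduction as the paper (Theorem~\ref{thm:upper} for the upper bound, Theorems~\ref{thm:lower_short_range_large_s} and~\ref{thm:lower_short_range_small_s} for the lower bounds, exploiting $\beta_c(1,1)=+\infty$), but the way you verify the hypotheses is genuinely different. The paper treats the corollary as the $d=1$ instance of Theorem~\ref{thm:lattice}: the covariance conditions are checked via the cited exponential correlation-decay bound of Lemma~\ref{lemma:correlation_decay} (valid for all $\beta<\beta_c(d,L)$), and the ratio-mixing condition for small $s$ is obtained through the Edwards--Sokal coupling with the random-cluster model and exponential decay of connectivities. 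You instead exploit the exact Markov-chain/transfer-matrix structure of the free-boundary 1D chain to get $\mathrm{Cov}_{\beta,\bQ,\mathbf{0}}(X_i,X_j)=(\tanh\beta)^{|i-j|}$ exactly and a one-step ratio-mixing bound $\big|\P(\bX_{S_1}=1,\bX_{S_2}=1)/\big(\P(\bX_{S_1}=1)\P(\bX_{S_2}=1)\big)-1\big|\le|\tanh\beta|^{d(S_1,S_2)}$ via conditioning on the boundary spin of the gap; this is elementary, self-contained, and is essentially the ``exact computation'' route the paper only alludes to in the remark following the corollary. What the paper's route buys is that the identical argument covers all $d\ge 1$ below criticality; what yours buys is that no external input (correlation decay below $\beta_c$, random-cluster mixing) is needed, and the constants are explicit in $\beta$. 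One shared technicality, not specific to your write-up since the paper's verification of Theorem~\ref{thm:lattice} has the same feature: Theorem~\ref{thm:lower_short_range_large_s} as stated requires $r_n'\to\infty$, so your choice ``any divergent $r_n'\le s/\log n$'' is only available when $s/\log n\to\infty$; in the boundary regime $s\asymp\log n$ one should either note that the cross-covariances are in fact $o(1)$ so the second-moment cross terms are $1+o(1)$ for any bounded $A^2 r_n'$, or argue directly from the second-moment computation, to recover the full claim $\tanh(A)\le c'\sqrt{\log n/s}$ there.
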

We note that for this particular case, a different proof using exact expressions for the log partition function 
can be used to show the validity of Corollary  \ref{corr:linegraph} for any $\beta\in \mathbb{R}$. 

\section{Proofs of Main Results}\label{section:proofs}

\subsection{Some Supporting Lemmas}\label{section:supporting_lemmas}

In this section we collect the lemmas (whose proofs we defer to Section \ref{sec:pfsecsupplem}) which will be used in the proofs of Theorems~\ref{thm:upper}--\ref{thm:lower_short_range_small_s}. 

\begin{lemma}[GHS Inequality \citep{lebowitz1974ghs}]\label{lemma:GHS}
		Suppose $X\sim \P_{\beta,\bQ,\bmu}$ with $\beta>0$, $\bQ_{ij}\geq 0$ for all $i,j\in [n]$ and 
	 $\bmu\in \left(\mathbb{R}^+\right)^{n}$. Then for any $(i_1,i_2,i_3)\in [n]^{\otimes 3}$ one has
	\begin{align*}
      \frac{\partial^3\log{Z_n(\beta,\bQ,\bmu)}}{\partial \mu_{i_1}\partial \mu_{i_2}\partial \mu_{i_3}}\leq 0.
	\end{align*}
	Consequently, for any $\bmu_1\succcurlyeq\bmu_2\succcurlyeq\mathbf{0}$ (i.e. coordinate-wise inequality) one has
\begin{align}
\mathrm{Cov}_{\beta,\bQ,\bmu_1}(X_i,X_j)&\leq \mathrm{Cov}_{\beta,\bQ,\bmu_2}(X_i,X_j),\label{eqn:correlation_ordering}
\end{align}
whenever $\beta\bQ_{ij}\geq 0$ for all $i,j\in [n]$.
\end{lemma}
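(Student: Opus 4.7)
The plan is to reduce the third-derivative inequality to a statement about the third joint cumulant of $\bX$ under $\PG$, establish that cumulant inequality via a four-replica symmetrization, and finally deduce the covariance-ordering consequence by integration. Direct differentiation of $\log Z_n(\beta,\bQ,\bmu)$ gives
\[
\frac{\partial \log Z_n}{\partial \mu_i}=\EG X_i,\qquad
\frac{\partial^2 \log Z_n}{\partial \mu_i\,\partial \mu_j}=\CG(X_i,X_j),
\]
and one further differentiation yields the third joint cumulant $u_3(i_1,i_2,i_3)$ of $(X_{i_1},X_{i_2},X_{i_3})$ under $\PG$. Hence the first assertion of the lemma reduces to showing $u_3(i_1,i_2,i_3)\le 0$ whenever $\bQ_{ij}\ge 0$ and $\bmu\succcurlyeq\mathbf{0}$.

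For this, I would carry out the classical four-replica argument of \cite{lebowitz1974ghs}. Introduce four i.i.d.\ copies $\bX^{(1)},\ldots,\bX^{(4)}\sim \PG$ and pass to orthogonal coordinates
\[
\sigma_i=\tfrac12\sum_{a=1}^{4}X_i^{(a)},\qquad
\tau^{(k)}_i=\tfrac12\sum_{a=1}^{4}\epsilon^{(k)}_a X_i^{(a)},\quad k=1,2,3,
\]
where $(\epsilon^{(1)},\epsilon^{(2)},\epsilon^{(3)})$ are the three non-trivial rows of a Hadamard matrix of order $4$. Expanding and rearranging, the single-copy third cumulant admits a representation of the form
\[
u_3(i_1,i_2,i_3)=-c\cdot \E\!\left[\tau^{(1)}_{i_1}\tau^{(1)}_{i_2}\tau^{(1)}_{i_3}\,\Phi(\sigma)\right]
\]
for some positive constant $c$, with $\Phi$ a coordinate-wise non-decreasing functional and the expectation taken under the joint product law. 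The crucial point is that the transformed quartic weight on $(\sigma,\tau^{(1)},\tau^{(2)},\tau^{(3)})$ inherits ferromagnetic interactions from $\bQ_{ij}\ge 0$ and a non-negative effective external field from $\bmu\succcurlyeq\mathbf{0}$; consequently a Griffiths/FKG-type positivity renders the bracketed expectation non-negative, giving $u_3\le 0$. The main obstacle in this step is the sign bookkeeping needed to collapse the cumulant into a single signed expectation — both hypotheses $\bQ_{ij}\ge 0$ and $\bmu\succcurlyeq \mathbf{0}$ enter essentially there.

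For the covariance-ordering consequence~\eqref{eqn:correlation_ordering}, the first part yields
\[
\frac{\partial}{\partial \mu_k}\CG(X_i,X_j)=\frac{\partial^3 \log Z_n(\beta,\bQ,\bmu)}{\partial \mu_i\,\partial \mu_j\,\partial \mu_k}\le 0
\]
for every $k\in[n]$ and every $\bmu\succcurlyeq \mathbf{0}$, whenever $\beta\bQ_{ij}\ge 0$. Since $\bmu_1\succcurlyeq\bmu_2\succcurlyeq \mathbf{0}$, I would parametrize a coordinate-wise monotone path from $\bmu_2$ to $\bmu_1$ entirely inside $(\R_+)^n$ — for example, by raising the coordinates one at a time — and integrate the non-positive directional derivative along this path. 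This yields the desired inequality $\mathrm{Cov}_{\beta,\bQ,\bmu_1}(X_i,X_j)\le \mathrm{Cov}_{\beta,\bQ,\bmu_2}(X_i,X_j)$.
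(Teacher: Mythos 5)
The paper does not actually prove this lemma: it is quoted as a classical result, and Section~\ref{sec:pfsecsupplem} simply states that Lemmas~\ref{lemma:GHS}--\ref{lemma:chatterjee} follow from the cited references (here \cite{lebowitz1974ghs}). Your second step — identifying $\partial^3 \log Z_n/\partial\mu_{i_1}\partial\mu_{i_2}\partial\mu_{i_3}$ with the third joint cumulant, noting that $\partial \mathrm{Cov}_{\beta,\bQ,\bmu}(X_i,X_j)/\partial\mu_k$ is exactly this quantity, and integrating along a coordinate-wise monotone path from $\bmu_2$ to $\bmu_1$ inside $(\R^+)^n$ — is correct and is precisely how the ``consequently'' part of the lemma is meant to be obtained. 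So for that portion you are in agreement with the intended argument.

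The gap is in the core claim $u_3(i_1,i_2,i_3)\le 0$. You defer it to a ``four-replica symmetrization'' and assert, without derivation, a representation $u_3=-c\,\E\bigl[\tau^{(1)}_{i_1}\tau^{(1)}_{i_2}\tau^{(1)}_{i_3}\,\Phi(\sigma)\bigr]$ with $\Phi$ monotone, followed by ``a Griffiths/FKG-type positivity.'' This is not the structure of the cited proof and, as stated, does not close: Lebowitz's argument uses \emph{two} duplicated copies with the variables $t_i=\sigma_i-\sigma_i'$, $q_i=\sigma_i+\sigma_i'$, and the needed correlation inequalities for the $(t,q)$ system are not consequences of plain FKG, because after the rotation the single-site measure is supported on a constrained set (the replicas are coupled through the $\pm1$ constraint) and is not a ferromagnetic product measure; Ginibre-type inequalities and a nontrivial bookkeeping of how $\bmu\succcurlyeq\mathbf{0}$ enters are exactly where the work lies. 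Four-replica (Hadamard) rotations are the standard device for the $u_4\le 0$ (Lebowitz) inequality at zero field, and the specific signed identity you write for $u_3$ is not a known identity one can simply invoke. In short, the crucial inequality is asserted rather than proved; either derive the duplicate-variable inequalities in full (as in \cite{lebowitz1974ghs}) or, as the paper does, cite the result outright rather than sketch an unverified mechanism.
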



\begin{lemma}[GKS Inequality \citep{friedli2017statistical}]\label{lemma:GKS}
		Suppose $X\sim \P_{\beta,\bQ,\bmu}$ with $\beta>0$, $\bQ_{ij}\geq 0$ for all $i,j\in [n]$ and $\bmu\in \left(\mathbb{R}^+\right)^{n}$. Then the following hold for any $i,j\in [n]$
		\begin{align*}
		\mathrm{Cov}_{\beta,\bQ,\bmu}(X_i,X_j)\geq 0; \quad \E_{\beta,\bQ,\bmu}(X_i)\geq 0.
		\end{align*}
\end{lemma}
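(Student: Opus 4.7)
The statement combines two classical inequalities: the non-negativity of every single-spin mean $\E_{\beta,\bQ,\bmu}[X_i]\ge 0$ (often called GKS1) and the non-negativity of every pairwise correlation $\E_{\beta,\bQ,\bmu}[X_iX_j]\ge \E_{\beta,\bQ,\bmu}[X_i]\,\E_{\beta,\bQ,\bmu}[X_j]$ (GKS2). The plan is to establish each by a moment-expansion argument in the spirit of Ginibre, with a duplicated-measure trick handling the covariance.

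For the mean, I would Taylor-expand the Boltzmann weight $\exp\bigl(\tfrac{\beta}{2}\bx^{\top}\bQ\bx+\bmu^{\top}\bx\bigr)$ and observe that after expansion every monomial $\prod_i x_i^{\alpha_i}$ appears with a non-negative coefficient, since each factor $\beta\bQ_{ij}$ or $\mu_i$ is non-negative by hypothesis. Multiplying by $X_i$ shifts the $i$-th exponent by one, and summing the resulting monomials over $\bx\in\{\pm 1\}^n$ yields $2^n$ when every exponent is even and $0$ otherwise, so it is always non-negative. Dividing by $Z_n>0$ gives $\E_{\beta,\bQ,\bmu}[X_i]\ge 0$. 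The same argument with $X_i$ replaced by $X_A:=\prod_{k\in A}X_k$ delivers the more general $\E_{\beta,\bQ,\bmu}[X_A]\ge 0$ for every $A\subseteq[n]$; this stronger fact will be used implicitly in the covariance step.

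For the covariance I would introduce an independent copy $\mathbf{Y}$ of $\bX$ from the same measure and use the symmetrization identity $\mathrm{Cov}_{\beta,\bQ,\bmu}(X_i,X_j)=\tfrac{1}{2}\E\bigl[(X_i-Y_i)(X_j-Y_j)\bigr]$, where the expectation is under the product measure on $(\bX,\mathbf{Y})$. Performing the Ginibre change of variables $U_k=(X_k+Y_k)/2$ and $V_k=(X_k-Y_k)/2$, each pair takes values in $\{(\pm 1,0),(0,\pm 1)\}$ with $U_kV_k=0$, and the joint density becomes proportional to $\exp\bigl(2\beta\sum_{k<\ell}\bQ_{k\ell}(U_kU_\ell+V_kV_\ell)+2\sum_k\mu_kU_k\bigr)$, so all coefficients remain non-negative, while a direct calculation gives $\mathrm{Cov}_{\beta,\bQ,\bmu}(X_i,X_j)=2\,\E[V_iV_j]$. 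Taylor-expanding this new Boltzmann weight reduces $\E[V_iV_j]$ to a sum, with non-negative weights, of factorized single-site integrals $\prod_k\sum_{(U_k,V_k)}U_k^{a_k}V_k^{b_k}$. The crucial observation is that each single-site integral vanishes unless one of $a_k,b_k$ is zero and the other is even, in which case it equals $2$ (or $4$ when both are zero); in particular every such integral is non-negative, so $\E[V_iV_j]\ge 0$ and the covariance bound follows.

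The main obstacle I anticipate lies in this covariance step: the duplicated single-site state space $\{(\pm 1,0),(0,\pm 1)\}$ is not a product of copies of $\{\pm 1\}$, so the standard GKS1 Taylor argument does not apply verbatim and has to be re-run with the per-site parity computation sketched above. The payoff of the Ginibre substitution is that the external field $\bmu$ couples only to the $U$-variables while the ferromagnetic quadratic form respects the $U$-$V$ split; this is precisely what keeps all the new coefficients non-negative and forces each single-site integral to be non-negative, allowing the expansion argument to close exactly as in the single-spin case.
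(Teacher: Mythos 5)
Your proposal is correct, but it cannot be compared to a proof in the paper because the paper offers none: Lemma 2 is stated with a citation to \citet{friedli2017statistical}, and Section 5.6 explicitly says that the proofs of Lemmas 1--4 ``follow from the references cited in the statements themselves.'' What you have written is a self-contained reconstruction of the classical argument that the cited reference contains: GKS--I ($\E_{\beta,\bQ,\bmu}(X_i)\ge 0$, and more generally $\E_{\beta,\bQ,\bmu}(X_A)\ge 0$) via Taylor expansion of the Boltzmann weight, whose coefficients are non-negative because $\beta\bQ_{ij}\ge 0$ and $\mu_i\ge 0$, combined with the parity computation $\sum_{\bx\in\{\pm1\}^n}\prod_i x_i^{\alpha_i}\in\{0,2^n\}$; and GKS--II (the covariance bound) via Ginibre's duplicated-system rotation $U_k=(X_k+Y_k)/2$, $V_k=(X_k-Y_k)/2$, under which the doubled Hamiltonian keeps non-negative coefficients, $\mathrm{Cov}_{\beta,\bQ,\bmu}(X_i,X_j)=2\,\E[V_iV_j]$, and each factorized single-site sum over $\{(\pm1,0),(0,\pm1)\}$ is non-negative (it is $4$, $2$, or $0$ depending on the parities of the exponents). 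I checked the details — the change of variables, the identity $x_kx_\ell+y_ky_\ell=2(u_ku_\ell+v_kv_\ell)$, $x_k+y_k=2u_k$, and the per-site parity table — and they are all sound; the only cosmetic remark is that your covariance step does not actually use the generalized GKS--I fact you announce (``used implicitly''), since the per-site computation already gives term-by-term non-negativity. In short: the paper delegates this lemma to the literature, and your write-up supplies essentially the standard Griffiths--Ginibre proof that the cited textbook gives, which is a perfectly valid (indeed the canonical) way to establish it.
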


\begin{lemma}[Lemma 8 of \cite{daskalakis2019testing}]\label{lemma:griffith_second}
	Suppose $X^{(k)}\sim \P_{\beta^{(k)},\bQ^{(k)},\mathbf{0}}$ for  $k=1,2$ with $\beta^{(1)}\bQ^{(1)}_{ij}\geq \beta^{(2)}\bQ^{(2)}_{ij}\geq 0$ for all $i,j$. 
	Then
	\begin{align*}
	\mathrm{Cov}_{\beta^{(1)},\bQ^{(1)},\mathbf{0}}(X_i,X_j)\geq \mathrm{Cov}_{\beta^{(2)},\bQ^{(2)},\mathbf{0}}(X_i,X_j), \quad \forall i,j.
	\end{align*}
\end{lemma}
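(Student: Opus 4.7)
The plan is to interpolate linearly between the two coupling matrices and differentiate the two-point function, reducing the inequality to the four-point Griffiths--Kelly--Sherman (GKS~II) inequality on the interpolated ferromagnet. First, at zero external field the measure $\P_{\beta^{(k)},\bQ^{(k)},\mathbf{0}}$ is invariant under the global spin flip $\bX\mapsto-\bX$, so $\E[X_i]=0$ and $\mathrm{Cov}(X_i,X_j)=\E[X_iX_j]$ in both models. Writing $\bJ^{(k)}:=\beta^{(k)}\bQ^{(k)}$, the hypothesis becomes $\bJ^{(1)}_{kl}\ge \bJ^{(2)}_{kl}\ge 0$ for all $k,l$, so it suffices to show $\E_{\bJ^{(1)}}[X_iX_j]\ge \E_{\bJ^{(2)}}[X_iX_j]$.

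Next, introduce the one-parameter family $\bJ(t):=(1-t)\bJ^{(2)}+t\bJ^{(1)}$ for $t\in[0,1]$, which has non-negative entries throughout, and let $\E_t$ denote expectation under the zero-field Ising model with coupling $\bJ(t)$. Using the standard identity $\partial\log Z_n(\bJ)/\partial\bJ_{kl}=\E[X_kX_l]$ for the Hamiltonian $\tfrac{1}{2}\bx^\top\bJ\bx$, a direct differentiation in $t$ yields
$$
\frac{d}{dt}\E_t[X_iX_j]=\sum_{k<l}\bigl(\bJ^{(1)}_{kl}-\bJ^{(2)}_{kl}\bigr)\Big\{\E_t[X_iX_jX_kX_l]-\E_t[X_iX_j]\,\E_t[X_kX_l]\Big\}.
$$
The prefactor $\bJ^{(1)}_{kl}-\bJ^{(2)}_{kl}$ is non-negative by assumption, so it remains to show that each bracketed quantity is non-negative for every $t\in[0,1]$.

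That last step is exactly the four-point GKS~II inequality applied with $A=\{i,j\}$ and $B=\{k,l\}$ in the ferromagnetic Ising model with coupling $\bJ(t)\ge 0$ and zero field. The main obstacle I anticipate is that Lemma~\ref{lemma:GKS} only records the two-point form of GKS; I would either invoke the standard four-point statement from \cite{friedli2017statistical} directly, or reprove it quickly via Ginibre's duplicate-variable trick (introduce an independent copy $\bY$, change variables to $(\bX+\bY,\bX-\bY)$, and observe that every term in the Taylor expansion of the Boltzmann weight has non-negative coefficient because $\bJ(t)\ge 0$). With that classical ingredient in hand, $t\mapsto\E_t[X_iX_j]$ is non-decreasing on $[0,1]$, and comparison at $t=0$ and $t=1$ gives the claim.
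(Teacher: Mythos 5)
Your proposal is correct. Note that the paper itself offers no argument for this lemma: it simply cites Lemma 8 of \cite{daskalakis2019testing} (see the remark at the start of Section 5.6 that the proofs of Lemmas 1--4 follow from the cited references), so there is no in-paper proof to compare against; your interpolation in the coupling matrix, $\frac{d}{dt}\E_t[X_iX_j]=\sum_{k<l}\bigl(\bJ^{(1)}_{kl}-\bJ^{(2)}_{kl}\bigr)\bigl(\E_t[X_iX_jX_kX_l]-\E_t[X_iX_j]\E_t[X_kX_l]\bigr)$, combined with the four-point GKS~II inequality is exactly the standard derivation of this monotonicity-in-couplings statement. You are also right to flag that Lemma \ref{lemma:GKS} as recorded in the paper only covers one- and two-point functions, so the full GKS~II inequality for products of spins (from \cite{friedli2017statistical}, or via Ginibre's duplicate-variable argument, which handles overlapping index sets since $X_i^2=1$) must be invoked separately; with that ingredient, and the observation that zero field gives $\E[X_i]=0$ so covariances reduce to $\E[X_iX_j]$, the argument is complete.
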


\begin{lemma}[See Theorem 1.5 of~\cite{chatterjee2007stein}~and~Lemma 2.1(b) of~\cite{Deb2020}]\label{lemma:chatterjee}
Let $\bX\sim \PJ$. Then for any $t>0$ and $S\subset [n]$ we have
\begin{align*}
\PJ\left(|L_{S}(\bmu)|>2(1+\beta\|\bQ\|_{\infty \rightarrow \infty})t\right)&\leq 2e^{-t^2/2},
\end{align*} 
where
$L_{S}(\bmu):=\frac{1}{\sqrt{|S|}}\sum_{i\in S} (X_i-\tanh(\beta m_i+\mu_i))$ with $m_i=\sum_{j=1}^n \bQ_{ij}X_j.$
\end{lemma}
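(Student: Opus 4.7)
The plan is to apply the exchangeable-pair concentration framework of Chatterjee (Theorem 1.5 of Chatterjee 2007), for which the statement is essentially tailor-made once we recognize that $L_S(\bmu)$ is \emph{conditionally centered}: by the defining formula of the Ising model, $\E_{\beta,\bQ,\bmu}[X_i \mid X_{-i}] = \tanh(\beta m_i + \mu_i)$, so each summand of $L_S(\bmu)$ is a martingale-increment-like object, and in particular $\E L_S(\bmu) = 0$.

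First I would construct an exchangeable pair via a single Glauber step: draw $\bX \sim \PJ$, pick $I \in [n]$ uniformly at random and independently of $\bX$, and let $\bX'$ agree with $\bX$ on all coordinates except that $X_I'$ is resampled from its conditional law given $X_{-I}$. Stationarity of the Ising measure under Glauber dynamics makes $(\bX,\bX')$ exchangeable. Then I would introduce the antisymmetric functional $F(\bX,\bX') := n\bigl(L_S(\bmu)(\bX) - L_S(\bmu)(\bX')\bigr)$ and check, using the conditional centering identity above, that $\E[F(\bX,\bX')\mid \bX] = L_S(\bmu)(\bX)$; this is the hypothesis needed for Chatterjee's setup.

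Next I would bound $|L_S(\bmu)(\bX) - L_S(\bmu)(\bX')|$ on the event $X_I \ne X_I'$. Writing $\Delta := X_I - X_I' \in \{-2,+2\}$, there are two contributions: a direct one $\I(I\in S)\Delta/\sqrt{|S|}$ from the possibly-changed spin, and an indirect one through the conditional means, since for $j \in S$ we have $m_j - m_j' = \bQ_{jI}\Delta$. Using $|\tanh(a)-\tanh(b)|\le |a-b|$ and the definition of $\|\bQ\|_{\infty\to\infty}$, the indirect contribution is at most
\[
\frac{\beta}{\sqrt{|S|}}\sum_{j\in S}|\bQ_{jI}||\Delta| \;\le\; \frac{2\beta\|\bQ\|_{\infty\to\infty}}{\sqrt{|S|}}.
\]
Adding the two contributions gives the uniform bound $|L_S(\bmu)(\bX) - L_S(\bmu)(\bX')| \le 2(1+\beta\|\bQ\|_{\infty\to\infty})/\sqrt{|S|}$.

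Finally, I would plug this into Chatterjee's variance proxy $v(\bx)=\tfrac{1}{2}\E[|F(\bX,\bX')|\,|L_S(\bmu)(\bX)-L_S(\bmu)(\bX')| \mid \bX=\bx]$. Using $|F|\le n\cdot 2(1+\beta\|\bQ\|_{\infty\to\infty})/\sqrt{|S|}$ together with the fact that only a single coordinate (the one indexed by the uniformly chosen $I$) is possibly updated, the $n$ and the $1/n$ from the uniform choice of $I$ cancel, and one obtains $v(\bx)\le 2(1+\beta\|\bQ\|_{\infty\to\infty})^2$ almost surely. Chatterjee's theorem then yields $\PJ(|L_S(\bmu)|>u)\le 2\exp(-u^2/(4(1+\beta\|\bQ\|_{\infty\to\infty})^2))$, which is exactly the stated bound after the substitution $u = 2(1+\beta\|\bQ\|_{\infty\to\infty})t$. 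The main obstacle is the careful accounting in the variance-proxy step, which must correctly track the cancellation between the $n$-prefactor in $F$ and the $1/n$ probability of touching any given coordinate; beyond this piece of bookkeeping, the argument is a direct and standard instance of the Stein/exchangeable-pairs machinery.
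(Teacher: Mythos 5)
Your overall route --- form an exchangeable pair by one Glauber step and invoke Chatterjee's Theorem 1.5 --- is exactly the mechanism behind the references the paper cites for this lemma (the paper itself gives no proof beyond the citation). However, your construction of the antisymmetric function has a genuine gap. With $F(\bX,\bX'):=n\left(L_S(\bmu)(\bX)-L_S(\bmu)(\bX')\right)$, the required identity $\E[F(\bX,\bX')\mid \bX]=L_S(\bmu)(\bX)$ is false in general: averaging over the resampled coordinate $I$, the linear part $\tfrac{1}{\sqrt{|S|}}\mathbf{1}\{I\in S\}(X_I-X_I')$ does reproduce $L_S(\bmu)(\bX)$, but resampling $X_I$ also perturbs every conditional-mean term $\tanh(\beta m_j+\mu_j)$ with $j\in S$ and $\bQ_{jI}\neq 0$, and because $\tanh$ is nonlinear the conditional expectation of these perturbations does not vanish (it is not obtained by plugging the conditional mean of $X_I'$ into the argument). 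So the hypothesis of Chatterjee's theorem is not satisfied by your $F$, and the variance-proxy bookkeeping that follows, while roughly of the right shape, rests on an identity that fails; crudely, the neglected term can be as large as order $\beta\|\bQ\|_{\infty\to\infty}\sqrt{|S|}$, i.e.\ not negligible relative to $L_S$.

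The fix is the standard one (and is what the cited lemma of Deb--Mukherjee does): build $F$ from the raw partial sum rather than from $L_S$ itself, and let $L_S$ arise as the conditional expectation. Take $F(\bX,\bX'):=\tfrac{n}{\sqrt{|S|}}\sum_{i\in S}(X_i-X_i')=\tfrac{n}{\sqrt{|S|}}\mathbf{1}\{I\in S\}(X_I-X_I')$. This is antisymmetric, and since $\bQ_{ii}=0$ the field $m_i$ is a function of $\bX_{-i}$ only, so exactly $\E[F(\bX,\bX')\mid\bX]=\tfrac{1}{\sqrt{|S|}}\sum_{i\in S}(X_i-\tanh(\beta m_i+\mu_i))=L_S(\bmu)(\bX)$. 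Moreover $F$ vanishes unless $I\in S$, and when a coordinate $i\in S$ is resampled your Lipschitz estimate $|L_S(\bmu)(\bX)-L_S(\bmu)(\bX')|\le 2(1+\beta\|\bQ\|_{\infty\to\infty})/\sqrt{|S|}$ (using symmetry of $\bQ$, so column sums are bounded by $\|\bQ\|_{\infty\to\infty}$) is correct. Hence the variance proxy is at most $\tfrac12\cdot\tfrac{|S|}{n}\cdot\tfrac{2n}{\sqrt{|S|}}\cdot\tfrac{2(1+\beta\|\bQ\|_{\infty\to\infty})}{\sqrt{|S|}}=2(1+\beta\|\bQ\|_{\infty\to\infty})$, and Chatterjee's theorem gives $\P\left(|L_S(\bmu)|\ge u\right)\le 2\exp\left(-u^2/\left(4(1+\beta\|\bQ\|_{\infty\to\infty})\right)\right)$; substituting $u=2(1+\beta\|\bQ\|_{\infty\to\infty})t$ yields $2e^{-(1+\beta\|\bQ\|_{\infty\to\infty})t^2}\le 2e^{-t^2/2}$, which is the stated bound.
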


\begin{lemma}
\label{lemma:chatterjee_high_temp}

Let $\bX\sim \PJ$ such that $\beta \|\bQ\|_{\infty\rightarrow\infty}<1$ and $\beta\min_{i,j}\bQ_{i,j}\geq 0$. Then for any $S\subset [n]$  and $t>0$ we have

\begin{align*}
\P_{\bQ,\bmu}\Big(|\tilde{L}_{S}(\bmu)|>\frac{t}{\sqrt{1-\beta \|\bQ\|_{\infty\rightarrow\infty}}}\Big)\le 2 e^{-t^2},
\end{align*}
where $
\tilde{L}_{S}(\bmu):=\frac{1}{\sqrt{|S|}}\sum_{i \in S} (X_i-\E_{\bQ,\bmu}X_i)$.
\end{lemma}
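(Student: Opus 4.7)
The plan is to reduce the claim to Lemma~\ref{lemma:chatterjee} via the decomposition
\[
\tilde{L}_{S}(\bmu) \;=\; L_{S}(\bmu)\;+\;\frac{1}{\sqrt{|S|}}R,\qquad R:=\sum_{i\in S}\bigl(\tanh(\beta m_i+\mu_i)-\E_{\bQ,\bmu}\tanh(\beta m_i+\mu_i)\bigr),
\]
which follows from the Ising identity $\E_{\bQ,\bmu}X_i=\E_{\bQ,\bmu}\tanh(\beta m_i+\mu_i)$ (itself a consequence of $\E[X_i\mid \bX_{-i}]=\tanh(\beta m_i+\mu_i)$). The first summand is handled directly by Lemma~\ref{lemma:chatterjee}, so the real work lies in bounding $R$ sharply under the Dobrushin-type assumption $\eta:=\beta\|\bQ\|_{\infty\rightarrow\infty}<1$ together with ferromagnetism $\beta\bQ_{ij}\ge 0$.

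For the remainder $R$, I will use a mean-value-theorem expansion of $\tanh$ around $\beta\E_{\bQ,\bmu}m_i+\mu_i$ to write
\[
R \;=\; \beta\sum_{j=1}^{n} c_j(\bX)\,(X_j-\E_{\bQ,\bmu}X_j)\;+\;\text{(deterministic constant)},
\]
with $c_j(\bX)=\sum_{i\in S}\bQ_{ij}\tanh'(\xi_i(\bX))$ satisfying $0\le c_j(\bX)\le \sum_{i\in S}\bQ_{ij}$ and hence $\sum_j c_j(\bX)^{2}\le \eta^{2}|S|$ by the symmetry of $\bQ$ and $\tanh'\in(0,1]$. Thus $R/\sqrt{|S|}$ is, up to a centering constant, a weighted linear statistic of the same Ising model of the same structural type as $\tilde{L}_{S}(\bmu)$ but contracted in its $\ell_2$ scale by a factor of order $\eta$.

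This sets up a self-bounding argument at the level of moment generating functions. Introducing, for each deterministic weight vector $\mathbf{v}$, the normalized statistic $\tilde L(\mathbf{v}):=\sum_j v_j(X_j-\E_{\bQ,\bmu}X_j)/\|\mathbf{v}\|_2$, I will propagate a uniform sub-Gaussian MGF bound of the form $\log\E_{\bQ,\bmu} e^{\lambda\tilde L(\mathbf{v})}\le \lambda^{2}/(2(1-\eta))$ by induction: Lemma~\ref{lemma:chatterjee} supplies the corresponding bound for the conditionally centered analogue $L(\mathbf{v})$, and the decomposition combined with the weight contraction above yields an inequality of the form $\Phi_{\mathbf{v}}(\lambda)\le \tfrac{\lambda^{2}}{2}+\eta\,\sup_{\mathbf{w}}\Phi_{\mathbf{w}}(\lambda)$ after invoking the ferromagnetic correlation inequalities (Lemmas~\ref{lemma:GKS}--\ref{lemma:griffith_second}) to absorb the stochastic dependence of the weights $c_j(\bX)$ on $\bX$. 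Solving this fixed-point inequality gives the desired uniform MGF bound, and a standard Chernoff argument with the optimal choice $\lambda=t\sqrt{1-\eta}$ produces exactly $\P_{\bQ,\bmu}(|\tilde{L}_{S}(\bmu)|>t/\sqrt{1-\eta})\le 2e^{-t^{2}}$.

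The principal obstacle will be handling the random weights $c_j(\bX)$ in the mean-value step while still extracting the sharp geometric factor in $\eta$: a naive McDiarmid-style bounded-differences estimate on $R$, applied directly through the Lipschitz coefficients $\alpha_j:=2\beta\sum_{i\in S}\bQ_{ij}$ with $\sum_j\alpha_j^{2}\le 4\eta^{2}|S|$, would yield the loose variance proxy $(1-\eta)^{-2}$ rather than the required $(1-\eta)^{-1}$. The remedy is to refuse to separate $R$ from $\tilde L_{S}(\bmu)$ and instead run the decomposition inside a single MGF/self-bounding loop, using the ferromagnetic FKG/GKS inequalities to pass the supremum over admissible weight vectors $\{\mathbf{c}:0\le c_j\le\sum_{i\in S}\bQ_{ij}\}$ inside the exponential at no loss. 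Once this is achieved, the iteration collapses the successive powers of $\eta$ into a single factor of $(1-\eta)^{-1}$ and the claim follows.
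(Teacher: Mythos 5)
Your decomposition $\tilde{L}_S(\bmu)=L_S(\bmu)+|S|^{-1/2}R$ and the $\ell_2$-contraction of the weights are fine, but the heart of your argument --- replacing the configuration-dependent weights $c_j(\bX)=\sum_{i\in S}\bQ_{ij}\tanh'(\xi_i(\bX))$ by a supremum over deterministic weight vectors inside the exponential moment ``at no loss'' via the ferromagnetic inequalities (Lemmas \ref{lemma:GKS} and \ref{lemma:griffith_second}) --- is exactly the step that is not justified, and it is where the whole difficulty lives. GKS/FKG compare expectations and covariances of \emph{monotone} functions of the spins; they provide no mechanism for decoupling a random coefficient vector from the fluctuations $(X_j-\E_{\bQ,\bmu}X_j)$ inside an MGF, and in general $\E\exp\big(\lambda\sum_j c_j(\bX)(X_j-\E X_j)\big)$ is \emph{not} dominated by $\sup_{\mathbf{c}}\E\exp\big(\lambda\sum_j c_j(X_j-\E X_j)\big)$ over $0\le c_j\le\sum_{i\in S}\bQ_{ij}$, since the random weights can correlate with the signs of the fluctuations; moreover $\tanh'(\xi_i)=\sech^2(\xi_i)$ is not a monotone function of the configuration, so FKG-type statements do not even apply to it. The fixed-point inequality $\Phi_{\mathbf{v}}(\lambda)\le\lambda^2/2+\eta\,\sup_{\mathbf{w}}\Phi_{\mathbf{w}}(\lambda)$ also requires the MGF of the \emph{sum} $L_S(\bmu)+\beta\sum_j c_j(\bX)(X_j-\E X_j)$ to split additively in the exponent; Lemma \ref{lemma:chatterjee} is only a marginal tail bound (not a conditional MGF bound), so the only generic splitting tool is H\"older, which loses precisely the factor you are trying to keep. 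Finally, the ``deterministic constant'' $D$ in your expansion of $R$ is not zero (the weights correlate with the fluctuations, so the linear part is not mean-zero) and would also need to be controlled. As written, the proposal amounts to re-deriving a Dobrushin-condition concentration theorem without the machinery such proofs actually use, so there is a genuine gap.

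For comparison, the paper's proof is two lines: compute the total variation distance between the conditional laws of $X_i$ given two configurations agreeing off coordinate $i$,
\begin{align*}
d_{TV}\left(\P^{(i)}(\cdot|\bx_{-i}),\P^{(i)}(\cdot|\by_{-i})\right)=\tfrac12\Big|\tanh\Big(\beta\sum_j\bQ_{ij}x_j+\mu_i\Big)-\tanh\Big(\beta\sum_j\bQ_{ij}y_j+\mu_i\Big)\Big|\le\beta\sum_j\bQ_{ij}\mathbf{1}\{x_j\ne y_j\},
\end{align*}
which identifies $\beta\bQ$ as a Dobrushin interdependence matrix with $\|\beta\bQ\|_{\infty\rightarrow\infty}<1$, and then invoke Theorem 4.3 of Chatterjee (2005) for the Lipschitz function $|S|^{-1/2}\sum_{i\in S}x_i$; the factor $(1-\beta\|\bQ\|_{\infty\rightarrow\infty})^{-1}$ comes directly from that theorem, and ferromagnetism is not needed at all. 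If you want a self-contained argument, you would essentially have to reproduce Chatterjee's exchangeable-pairs proof under the Dobrushin condition, not a GKS-based weight replacement.
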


\begin{lemma}\label{lemma:second_moment_monotonicity}
Consider a finite set $\bmu_1,\ldots,\bmu_k$ of $\left(\mathbb{R}^+\right)^{n}$ and  let $\pi$ be the uniform prior on them.  If $\beta\bQ_{ij}\geq 0$ for all $i,j\in [n]$ and $L_{\pi}$ denotes the likelihood ratio of \eqref{eqn:sparse_hypo} w.r.t $\pi$, then $\E_{\mathbf{\beta,\bQ,\mathbf{0}}}(L_{\pi}^2)$, viewed as a function of $k\times n$ coordinates of $\bmu_1,\ldots,\bmu_k$ is coordinate-wise increasing. 

\end{lemma}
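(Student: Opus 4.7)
The plan is to first compute $\E_{\beta,\bQ,\mathbf{0}}(L_\pi^2)$ in closed form using ratios of partition functions, and then verify the coordinate-wise monotonicity termwise using the GKS inequality (Lemma~\ref{lemma:GKS}).

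Writing $Z(\bmu):=Z_n(\beta,\bQ,\bmu)$ for brevity, the Ising density from \eqref{eqn:general_ising} gives
\begin{align*}
\frac{\P_{\beta,\bQ,\bmu}(\bx)}{\P_{\beta,\bQ,\mathbf{0}}(\bx)}=\frac{Z(\mathbf{0})}{Z(\bmu)}e^{\bmu^\top \bx},
\end{align*}
so that $L_\pi(\bX)=\frac{1}{k}\sum_{l=1}^{k}\frac{Z(\mathbf{0})}{Z(\bmu_l)}e^{\bmu_l^\top \bX}$. Squaring, taking expectation under $\P_{\beta,\bQ,\mathbf{0}}$, and using that $\E_{\beta,\bQ,\mathbf{0}}[e^{(\bmu_l+\bmu_m)^\top \bX}]=Z(\bmu_l+\bmu_m)/Z(\mathbf{0})$, the second moment becomes
\begin{align*}
\E_{\beta,\bQ,\mathbf{0}}(L_\pi^2)=\frac{1}{k^2}\sum_{l,m=1}^{k}T_{l,m},\qquad T_{l,m}:=\frac{Z(\mathbf{0})\,Z(\bmu_l+\bmu_m)}{Z(\bmu_l)\,Z(\bmu_m)}.
\end{align*}
Each $T_{l,m}$ is strictly positive, so it suffices to show that $T_{l,m}$ is coordinate-wise non-decreasing in each of the $kn$ coordinates of $(\bmu_1,\ldots,\bmu_k)$.

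Fix a coordinate $(\bmu_l)_i$. Since $\partial_{\mu_i}\log Z(\bmu)=\E_{\beta,\bQ,\bmu}(X_i)$, I compute
\begin{align*}
\partial_{(\bmu_l)_i}\log T_{l,m}=\begin{cases}\E_{\beta,\bQ,\bmu_l+\bmu_m}(X_i)-\E_{\beta,\bQ,\bmu_l}(X_i), & m\ne l,\\ 2\bigl[\E_{\beta,\bQ,2\bmu_l}(X_i)-\E_{\beta,\bQ,\bmu_l}(X_i)\bigr], & m=l,\end{cases}
\end{align*}
and for $m'\ne l$ the partial $\partial_{(\bmu_l)_i}\log T_{m',m'}=0$. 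Thus it remains to argue that $\E_{\beta,\bQ,\bmu}(X_i)$ is non-decreasing along the segment from $\bmu_l$ to $\bmu_l+\bmu_m$ (and similarly from $\bmu_l$ to $2\bmu_l$). This is immediate: for any $\bmu\succcurlyeq\mathbf{0}$, Lemma~\ref{lemma:GKS} yields $\partial_{\mu_j}\E_{\beta,\bQ,\bmu}(X_i)=\mathrm{Cov}_{\beta,\bQ,\bmu}(X_i,X_j)\ge 0$, so $\bmu\mapsto \E_{\beta,\bQ,\bmu}(X_i)$ is coordinate-wise non-decreasing on the positive orthant. Since $\bmu_m\succcurlyeq \mathbf{0}$ and $\bmu_l\succcurlyeq\mathbf{0}$, both differences above are nonnegative, hence $\partial_{(\bmu_l)_i}T_{l,m}=T_{l,m}\cdot \partial_{(\bmu_l)_i}\log T_{l,m}\ge 0$. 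Summing over $l,m$ and invoking symmetry $T_{l,m}=T_{m,l}$ for the same argument applied to $(\bmu_m)_i$, we conclude $\E_{\beta,\bQ,\mathbf{0}}(L_\pi^2)$ is coordinate-wise non-decreasing, as desired.

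No step is genuinely hard; the only care required is the bookkeeping of the diagonal $m=l$ term (where the derivative is twice that of an off-diagonal term) and verifying that the GKS inequality, which is stated only for $\bmu\succcurlyeq \mathbf{0}$, is being applied on the correct domain — which it is, because all vectors $\bmu_l,\bmu_l+\bmu_m,2\bmu_l$ lie in $(\mathbb{R}^+)^n$ by hypothesis.
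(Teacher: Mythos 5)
Your proposal is correct and follows essentially the same route as the paper: decompose $k^2\E_{\beta,\bQ,\mathbf{0}}(L_\pi^2)$ into the partition-function ratios $Z(\mathbf{0})Z(\bmu_l+\bmu_m)/\big(Z(\bmu_l)Z(\bmu_m)\big)$ (with the diagonal terms $Z(\mathbf{0})Z(2\bmu_l)/Z(\bmu_l)^2$), differentiate the log of each term to get a difference of magnetizations, and conclude nonnegativity from GKS. The paper phrases the last step via the mean value theorem (the difference of gradients equals $\bmu_l^\top\mathrm{Var}_{\beta,\bQ,\pmb{\eta}}(\bX)\mathbf{e}_i$ with $\pmb{\eta}$ on the segment), which is equivalent to your integration of $\mathrm{Cov}_{\beta,\bQ,\bmu}(X_i,X_j)\ge 0$ along the segment.
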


\begin{lemma}\label{lemma:expectation_vs_externalmag}
	Suppose $X\sim \P_{\beta,\bQ,\bmu}$  with $\bmu\in (\mathbb{R}^+)^{n}$, and $\beta \bQ_{i,j}\ge 0$ for all $i\ne j$. Setting $\rho:=(1-\tanh(\beta \|\bQ\|_{\infty\rightarrow\infty})$ we have $\E_{\beta, \bQ,\bmu}(X_i)\ge \rho\tanh(\mu_i)$.
		\end{lemma}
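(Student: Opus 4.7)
The plan is to first condition on $X_{-i}:=(X_j)_{j\ne i}$. Under $\PG$ the conditional law of $X_i$ given $X_{-i}$ is the two-point measure on $\{\pm 1\}$ with log-odds $2(\beta m_i+\mu_i)$, so $\EG[X_i\mid X_{-i}]=\tanh(\beta m_i+\mu_i)$ and therefore $\EG(X_i)=\EG[\tanh(\beta m_i+\mu_i)]$. Without loss of generality I take $\beta\ge 0$, since otherwise replacing $(\beta,\bQ)$ by $(-\beta,-\bQ)$ defines the same measure and ensures $\bQ_{ij}\ge 0$.

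The first substantive step is a deterministic (pointwise in $X_{-i}$) inequality
\[
\tanh(\beta m_i+\mu_i)\;\ge\;\tanh(\beta m_i)+\rho\,\tanh(\mu_i).
\]
Because each $X_j\in\{\pm 1\}$ and $\bQ_{ij}\ge 0$, we have $|m_i|\le\|\bQ\|_{\infty\to\infty}$, hence $|\tanh(\beta m_i)|\le \tanh(\beta\|\bQ\|_{\infty\to\infty})=1-\rho=:\gamma$. Using the identity
\[
\tanh(a+b)-\tanh(a)\;=\;\tanh(b)\cdot\frac{1-\tanh^2 a}{1+\tanh(a)\tanh(b)}
\]
with $a=\beta m_i$, $b=\mu_i$, and writing $u=\tanh(a)\in[-\gamma,\gamma]$, $v=\tanh(b)\in[0,1)$, the pointwise claim reduces to $\frac{1-u^2}{1+uv}\ge 1-\gamma$, equivalently $\gamma(1+uv)-u(u+v)\ge 0$. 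This last inequality is elementary: splitting on the sign of $uv$ and using $|u|\le \gamma\le 1$, $v\le 1$ settles both cases.

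Taking expectations yields $\EG(X_i)\ge \EG[\tanh(\beta m_i)]+\rho\tanh(\mu_i)$, so everything reduces to showing $\EG[\tanh(\beta m_i)]\ge 0$. Under the zero-field measure $\PZ$ the Ising distribution is invariant under the global spin flip $\bX\mapsto -\bX$, and $\bX\mapsto \tanh(\beta m_i(\bX))$ is an odd function, so $\EZ[\tanh(\beta m_i)]=0$. Since $\beta\bQ_{ij}\ge 0$, the map $X_{-i}\mapsto \tanh(\beta m_i)$ is coordinate-wise non-decreasing, and Holley's inequality for ferromagnetic Ising models gives that $\PG$ stochastically dominates $\PZ$ whenever $\bmu\succcurlyeq\mathbf{0}$; applied to the increasing function $\tanh(\beta m_i)$ this yields $\EG[\tanh(\beta m_i)]\ge \EZ[\tanh(\beta m_i)]=0$, completing the argument.

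The main obstacle I anticipate is the positivity $\EG[\tanh(\beta m_i)]\ge 0$: a purely pointwise strengthening to $\tanh(\beta m_i+\mu_i)\ge \rho\tanh(\mu_i)$ is false (take $m_i=-\|\bQ\|_{\infty\to\infty}$ and $\mu_i$ small), so one genuinely needs to use that the positive external field biases $m_i$ upward on average. Combining spin-flip symmetry at $\bmu=\mathbf{0}$ with the FKG/Holley stochastic domination is the most direct route.
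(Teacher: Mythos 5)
Your proposal is correct and follows essentially the same route as the paper's proof: reduce via the tower property to $\E_{\beta,\bQ,\bmu}\tanh(\beta m_i+\mu_i)$, apply the pointwise bound $\tanh(\beta m_i+\mu_i)\ge \tanh(\beta m_i)+\rho\tanh(\mu_i)$ using $|\beta m_i|\le\beta\|\bQ\|_{\infty\to\infty}$, and then show $\E_{\beta,\bQ,\bmu}\tanh(\beta m_i)\ge \E_{\beta,\bQ,\mathbf{0}}\tanh(\beta m_i)=0$ by stochastic monotonicity in $\bmu$ together with spin-flip symmetry at $\bmu=\mathbf{0}$. The only difference is that you spell out the elementary verification of the pointwise $\tanh$ inequality, which the paper leaves implicit.
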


\subsection{Proof of Theorem \ref{thm:upper}}

By  Lemma \ref{lemma:chatterjee} we get
\begin{align*}
\PZ\left(|L_{S}({\bf 0})|> 2(1+\beta \|\bQ\|_{\infty \rightarrow \infty})\sqrt{2(1+\delta)\log{|\mathcal{C}_n|}}\right)\leq 2\exp\left(-(1+\delta)\log{|\mathcal{C}_n|}\right).
\end{align*} 
A union bound then gives
\begin{align*}
\PZ\left(L_n>2(1+\beta\|\bQ\|_{\infty \rightarrow \infty})\sqrt{2(1+\delta)\log{|\mathcal{C}_n|}}\right)&\leq \frac{2}{|\mathcal{C}_n|^{\delta}}\stackrel{n \rightarrow \infty}{\rightarrow}0,
\end{align*}
yielding a control over the Type I error of the test. In order to control the Type II error of the test suppose $\mathrm{supp}(\bmu)=S$ for some $S\in \mathcal{C}_n$. Then we show that $|L_{S}(\mathbf{0})|$ beats the the null cut-off. To this end note that
\begin{align*}
|L_{S}(\mathbf{0})|\geq \left\vert\frac{1}{\sqrt{|S|}}\sum_{i\in S} (\tanh(\beta m_i+\mu_i)-\tanh(\beta m_i))\right\vert-|L_{S}(\bmu)|
\end{align*}
Again by Lemma \ref{lemma:chatterjee} we have
\begin{align*}
\P_{\beta,\bQ,\bmu}\left(|L_{S}(\bmu)|\leq 2(1+\beta \|\bQ\|_{\infty \rightarrow \infty})\sqrt{2(1+\delta)\log{|\mathcal{C}_n|}}\right)\geq 1-2\exp\left(-(1+\delta)\log{|\mathcal{C}_n|}\right).
\end{align*} 

On the other hand, observe that from elementary calculus,
	\begin{align*}
	\sup_{x\in [0,K],y\geq 0}\frac{\tanh(x+y)-\tanh(x)}{\tanh(y)}\gtrsim 1,
	\end{align*}
	where $K>0$ and the (hidden) constant on the right is strictly positive and depends on $K$. Therefore there exists a constant $M>0$ (depending on $\beta$, $\lVert \bQ\rVert_{\infty\to\infty}$) such that
\begin{align*}
\sum_{i\in S} (\tanh(\beta m_i+\mu_i)-\tanh(\beta m_i)
 \geq M\tanh(A)|S|.
\end{align*} 
The desired control on the Type II error therefore follows on noting the following string of inequalities:
 \begin{align*}
 M\tanh(A)\sqrt{|S|}\ge C'\cdot M\sqrt{\log n}
\ge 4(1+\beta \|\bQ\|_{\infty\rightarrow\infty})\sqrt{2(1+\delta)\log |\mathcal{C}_n|},
 \end{align*}
 where the last inequality holds for all $n$ large (using the fact that $\log{|\mathcal{C}_n|}\leq C_u\log{n}$) for $C':=\frac{8(1+\beta C_u')\sqrt{C_u}}{M}$.

\subsection{Proof of Theorem \ref{thm:upper_unknown_beta_Q}}


To begin, a union bound along with Lemma \ref{lemma:chatterjee_high_temp} gives
\begin{align*}
\PZ\left(\tilde{L}_n>\sqrt{\frac{(1+\delta)\log |\mathcal{C}_n|}{1-\eta}}\right)&\leq \frac{2}{|\mathcal{C}_n|^{\delta}}\stackrel{n \rightarrow \infty}{\rightarrow}0,
\end{align*}
yielding a control over the Type I error of the test. 

Proceeding to control Type II error, 
with $\rho:=1-\tanh(\eta)$ we have for the signal set $S\in \mathcal{C}_n$
\begin{align*}
\big|\tilde{L}_{S}(\mathbf{0})\big|\geq \frac{1}{\sqrt{|S|}}\sum_{i\in S} \E_{\beta,\bQ,\bmu}(X_i)-|\tilde{L}_{S}(\bmu)|\ge \rho\tanh(A)\sqrt{|S|}-|\tilde{L}_{S}(\bmu)|,
\end{align*}
where the last inequality uses Lemma \ref{lemma:expectation_vs_externalmag}. Also, again invoking Lemma \ref{lemma:chatterjee_high_temp} we have
$$\P_{\beta,\bQ,\bmu}\left(|\tilde{L}_S(\bmu)|\le \sqrt{\frac{(1+\delta) \log |\mathcal{C}_n|}{1-\eta}}\right)\ge 1-2\exp(-(1+\delta)\log |\mathcal{C}_n|).$$
This gives that
$$\tilde{L}_n\ge \tilde{L}_{S}({\bf 0})\ge \rho\tanh(A)\sqrt{|S|}-\sqrt{\frac{(1+\delta)\log |\mathcal{C}_n|}{1-\eta}}$$ with probability tending to $1$ under $\P_{\beta,\bQ,\bmu}$, and so Type II error converges to $0$ as soon as we have $$\rho\tanh(A)\sqrt{s}\ge 2\sqrt{\frac{(1+\delta)\log |\mathcal{C}_n|}{1-\eta}},$$
which can be achieved by choosing $C'=\frac{4\sqrt{C_u}}{\rho\sqrt{1-\eta}}$ (since $\log|\mathcal{C}_n|\leq C_u\log{n}$), which depends only on $\eta,C_u$.

\subsection{Proof of Theorem \ref{thm:lower_short_range_large_s}}

\begin{enumerate}
	
\item [(I)]

Let $\mathcal{C}_n'$ be the subclass of $\mathcal{C}_n$ described in the statement of Theorem \ref{thm:lower_short_range_large_s}. Recall that for any $S\in \mathcal{C}_n$ and real number $\eta$, $\bmu_S(\eta)$ denotes the vector which has $\mu_i=\eta\I(i\in S)$. 
Letting $\pi$ denote the uniform prior on $\{\P_{\beta,\bQ,\bmu_S(A)}, S\in \mathcal{C}_n'\}$ it is easy to see that the corresponding second moment of the likelihood ratio is given by (owing to the disjointedness of the sets in $\mathcal{C}_n'$),
\begin{align}
\E_{\beta,\bQ,\mathbf{0}}(L_{\pi}^2)&=\frac{1}{|\mathcal{C}_n'|^2}\sum_{S\in \mathcal{C}_n'}\frac{Z_n^2(\beta,\bQ,\mathbf{0})Z_n(\beta,\bQ,\bmu_S(2A))}{Z_n^2(\beta,\bQ,\bmu_S(A))Z_n(\beta,\bQ,\mathbf{0})}\nonumber\\
&+\frac{1}{|\mathcal{C}_n'|^2}\sum_{S_1\neq S_2\in \mathcal{C}_n'}\frac{Z_n^2(\beta,\bQ,\mathbf{0})Z_n(\beta,\bQ,\mathbf{\bmu}_{S_1\cup S_2}(A))}{Z_n(\beta,\bQ,\bmu_{S_1}(A))Z_n(\beta,\bQ,\bmu_{S_2}(A))Z_n(\beta,\bQ,\mathbf{0})}.\label{eqn:second_monent_short_range}
\end{align}
Now for any $S\in \mathcal{C}_n'$ a two term Taylor expansion in $A$ around $0$  gives the existence of $\eta\in [0,A]$ (depending on $S,A,\beta$) 
such that
\begin{align*}
\ & \frac{Z_n^2(\beta,\bQ,\mathbf{0})Z_n(\beta,\bQ,\bmu_S(2A))}{Z_n^2(\beta,\bQ,\bmu_S(A))Z_n(\beta,\bQ,\mathbf{0})}\\
&=\exp\left(\log{Z_n(\beta,\bQ,\mathbf{0})}+\log{Z_n(\beta,\bQ,\bmu_S(2A))}-2\log{Z_n(\beta,\bQ,\bmu_S(A))}\right)\\
&=\exp\left(\frac{A^2}{2}\left[4\frac{\partial^2 \log Z_n(\beta,\bQ,\bmu_S(2h))}{\partial h^2}\vert_{h=\eta}-2\frac{\partial^2\log Z_n(\beta,\bQ,\bmu_S(h))}{\partial h^2}\vert_{h=\eta}\right]\right)\\
&=\exp\left(\frac{A^2}{2}\left[4\mathrm{Var}_{\beta,\bQ,\bmu_S(2\eta)}\left(\sum_{i\in S}X_i\right)-2\mathrm{Var}_{\beta,\bQ,\bmu_S(\eta)}\left(\sum_{i\in S}X_i\right)\right]\right)\\
&\leq \exp\left(\frac{A^2}{2}\left[4\mathrm{Var}_{\beta,\bQ,\bmu_S(2\eta)}\left(\sum_{i\in S}X_i\right)+2\mathrm{Var}_{\beta,\bQ,\bmu_S(\eta)}\left(\sum_{i\in S}X_i\right)\right]\right)\\
&\leq \exp\left(\frac{2A^2}{2}\left[4\sum_{i,j\in S}\mathrm{Cov}_{\beta,\bQ,\bmu_S(2\eta)}(X_i,X_j)+2\sum_{i,j\in S}\mathrm{Cov}_{\beta,\bQ,\bmu_S(\eta)}(X_i,X_j)\right]\right).
\end{align*}

Now the main challenge is to understand these spin-spin covariances at arbitrary magnetization $\eta$ at locations $S$. To deal with this we employ GHS inequality (Lemma \ref{lemma:GHS}) to get that for any $\eta\geq 0$, any $S\in \mathcal{C}_n'$, and any $i,j$ we have
\begin{align*}
\mathrm{Cov}_{\beta,\bQ,\bmu_S(\eta)}\left(X_i,X_j\right)\leq \mathrm{Cov}_{\beta,\bQ,\mathbf{0}}\left(X_i,X_j\right).
\end{align*}

Therefore, using the condition of the theorem we have that

\begin{align}
\frac{Z_n^2(\beta,\bQ,\mathbf{0})Z_n(\beta,\bQ,\bmu_S(2A))}{Z_n^2(\beta,\bQ,\bmu_S(A))Z_n(\beta,\bQ,\mathbf{0})}&\leq
\exp\left(6A^2\mathrm{Var}_{\beta,\bQ,\mathbf{0}}\sum_{i\in S} X_i\right)\leq  \exp\left(6A^2r_n\right).\label{eqn:second_moment_shortrange_term1}
\end{align}

Next note that, once again for any $S_1,S_2\in \mathcal{C}_n'$ we have for some $\eta\in [0,A]$ (possibly different) such that the following hold by a two term Taylor expansion in $A$ around $0$: 
\begin{align*}
\ & \frac{Z_n^2(\beta,\bQ,\mathbf{0})Z_n(\beta,\bQ,\mathbf{\bmu}_{S_1\cup S_2}(\eta))}{Z_n(\beta,\bQ,\bmu_{S_1}(A))Z_n(\beta,\bQ,\bmu_{S_2}(A))Z_n(\beta,\bQ,\mathbf{0})}\\
&=\exp\left(\frac{A^2}{2}\left[\begin{array}{c}\mathrm{Var}_{\beta,\bQ,\mathbf{\bmu}_{S_1\cup S_2}(\eta)}\left(\sum_{i\in S_1\cup S_2}X_i\right)\\-\mathrm{Var}_{\beta,\bQ,\bmu_{S_1}(\eta)}\left(\sum_{i\in S_1}X_i\right)-\mathrm{Var}_{\beta,\bQ,\bmu_{S_2}(\eta)}\left(\sum_{i\in S_2}X_i\right)\end{array}\right]\right).
\end{align*}
Again by GHS inequality (Lemma \ref{lemma:GHS}) one has that for disjoint $S_1,S_2$ and $\eta\geq 0$
\begin{align*}
\mathrm{Var}_{\beta,\bQ,\mathbf{\bmu}_{S_1\cup S_2}(\eta)}\left(\sum_{i\in S_1}X_i\right)\le \mathrm{Var}_{\beta,\bQ,\bmu_{S_1}(\eta)}\left(\sum_{i\in S_1}X_i\right),\\
\mathrm{Var}_{\beta,\bQ,\mathbf{\bmu}_{S_1\cup S_2}(\eta)}\left(\sum_{i\in S_2}X_i\right)\le \mathrm{Var}_{\beta,\bQ,\bmu_{ S_2}(\eta)}\left(\sum_{i\in  S_2}X_i\right).
\end{align*}
Consequently, 
\begin{align}
\ & \frac{Z_n^2(\beta,\bQ,\mathbf{0})Z_n(\beta,\bQ,\bmu_{S_1\cup S_2}(A))}{Z_n(\beta,\bQ,\bmu_{S_1}(A))Z_n(\beta,\bQ,\bmu_{S_2}(A))Z_n(\beta,\bQ,\mathbf{0})}\nonumber\\
&\leq \exp\left(\frac{A^2}{2}\sum_{i\in S_1, j\in S_2}\mathrm{Cov}_{\beta,\bQ,\mathbf{\bmu}_{S_1\cup S_2}(\eta)}(X_i,X_j)\right)\nonumber\\
&\leq \exp\left(\frac{A^2}{2}\sum_{i\in S_1, j\in S_2}\mathrm{Cov}_{\beta,\bQ,\mathbf{0}}(X_i,X_j)\right)
\end{align}
where the second to last line follows, as before, by GHS inequality.
Therefore, combining \eqref{eqn:second_monent_short_range} and \eqref{eqn:second_moment_shortrange_term1}, we have 
\begin{align}
\E_{\beta,\bQ,\mathbf{0}}(L_{\pi}^2)&\leq \frac{1}{|\mathcal{C}_n'|^2}\sum_{S\in \mathcal{C}_n'}\exp\left(6A^2r_n \right) +\frac{1}{|\mathcal{C}_n'|^2}\sum_{S_1\neq S_2\in \mathcal{C}_n'}\exp\left(o(A^2r_n')\right).\label{eqn:second_moment_short_range_generalclass}
\end{align}

As $\log{|\mathcal{C}_n'|}\geq C_l\log{n}$, there exists constant $c'>0$ such that if $A\leq c'\min\left\{\sqrt{\frac{\log{n}}{r_n}},\frac{1}{\sqrt{r_n'}}\right\}$ then one has $\E_{\beta,\bQ,\mathbf{0}}(L_{\pi}^2)=1+o(1)$ by \eqref{eqn:second_moment_short_range_generalclass}. Since $r_n\geq C\log{n}$, the same conclusion holds if $\tanh(A)\leq c'\min\left\{\sqrt{\frac{\log{n}}{r_n}},\frac{1}{\sqrt{r_n'}}\right\}$ for a different constant $c'>0$.

\item [(II)] 
To prove this part of theorem, we consider the same prior $\pi$ as in part (I) and denote $P_{\beta,\bQ,\pi}$ to be the corresponding mixture of probability  measures. We first claim that it is enough to prove that $\mathbb{P}_{\beta,\bQ,\pi}(\cdot|\Omega_n)$ is contiguous w.r.t. $\mathbb{P}_{\beta,\bQ,\mathbf{0}}(\cdot|\Omega_n)$ (where for any distribution $\P$ of $ \bX$, $\P(\cdot|\Omega)$ is used denote conditional distribution given $\bX\in \Omega$). To verify this by contradiction, suppose we have a sequence of rejection regions $\mathcal{R}_n$ such that $$
\P_{\beta,\bQ,\mathbf{0}}(\mathcal{R}_n)\rightarrow 0,\quad 
\P_{\beta,\bQ,\pi}(\mathcal{R}_n)\rightarrow 1.$$ We will show that if $\mathbb{P}_{\beta,\bQ,\pi}(\cdot|\Omega_n)$ is contiguous w.r.t. $\mathbb{P}_{\beta,\bQ,\mathbf{0}}(\cdot|\Omega_n)$ then $\limsup_{n\rightarrow \infty}\P_{\beta,\bQ,\pi}(\mathcal{R}_n)\leq 1-\kappa$, which will give a contradiction. To see this, first note that since $\P_{\beta,\bQ,\mathbf{0}}(\mathcal{R}_n)\rightarrow 0$ and $\liminf_{n\rightarrow \infty}\P_{\beta,\bQ,\mathbf{0}}(\Omega_n)\geq \kappa>0$, we have $\P_{\beta,\bQ,\mathbf{0}}(\mathcal{R}_n|\Omega_n)\rightarrow 0$. Consequently, by contiguity one must have $\P_{\beta,\bQ,\pi}(\mathcal{R}_n|\Omega_n)\rightarrow 0$. Also since $\Omega_n$ is an increasing event for every $n$, we also have by monotonicity of measures  w.r.t. $\bmu$ 
that $\liminf_{n\rightarrow \infty}\P_{\beta,\bQ,\pi}(\Omega_n)\geq \kappa$.  Thus writing
\begin{align*}
\P_{\beta,\bQ,\pi}(\mathcal{R}_n)&= \P_{\beta,\bQ,\pi}(\mathcal{R}_n|\Omega_n)\P_{\beta,\bQ,\pi}(\Omega_n)+\P_{\beta,\bQ,\pi}(\mathcal{R}_n\cap \Omega_n^c),
\end{align*}
the first term of the right hand side of the display above goes to $0$ by contiguity and the second term satisfies $$\limsup_{n\rightarrow \infty}\P_{\beta,\bQ,\pi}(\mathcal{R}_n\cap \Omega_n^c)\leq \limsup_{n\rightarrow \infty}\P_{\beta,\bQ,\pi}(\Omega_n^c)\leq 1-\kappa.$$
It follows that $\limsup\limits_{n\to\infty}\P_{\beta,\bQ,\pi}(\mathcal{R}_n)\le 1-\kappa$, as desired.
\\

It thus suffices to verify conditional contiguity, which follows 
$\E_{\beta,\bQ,\mathbf{0}}(L^2_{\pi,\Omega_n}|\Omega_n)$ stays bounded, where 
$L_{\pi,\Omega_n}$ is likelihood ratio of the probability measure $\mathbb{P}_{\beta,\bQ,\pi}(\cdot|\Omega_n)$ is contiguous w.r.t. $\mathbb{P}_{\beta,\bQ,\mathbf{0}}(\cdot|\Omega_n)$. To this end, one has that the conditional probability measure $\P_{\beta,\bQ,\bmu}(\cdot|\Omega_n)$ corresponding to any $\P_{\beta,\bQ,\bmu}$ in \eqref{eqn:general_ising} is given by
\begin{align*}
\P_{\beta,\bQ,\bmu}(\bX=\bx|\Omega)=\frac{\exp\left(\frac{\beta}{2}\bx^T\bQ\bx+\bmu^T\bX\right)\mathbf{1}(x\in \Omega_n)}{Z_n(\beta,\bQ,\bmu|\Omega_n)},
\end{align*}
where $Z_n(\beta,\bQ,\bmu|\Omega_n)=\sum_{\bx\in \Omega_n}\exp\left(\frac{\beta}{2}\bx^T\bQ\bx+\bmu^T\bX\right)$. Therefore, by direct calculations similar to part (I) one has 
\begin{align}
\E_{\beta,\bQ,\mathbf{0}}(L^2_{\pi,\Omega_n}|\Omega_n)&=\frac{1}{|\mathcal{C}_n'|^2}\sum_{S\in \mathcal{C}_n'}\frac{Z_n^2(\beta,\bQ,\mathbf{0}|\Omega_n)Z_n(\beta,\bQ,\bmu_S(2A)|\Omega_n)}{Z_n^2(\beta,\bQ,\bmu_S(A)|\Omega_n)Z_n(\beta,\bQ,\mathbf{0}|\Omega_n)}\nonumber\\
&+\frac{1}{|\mathcal{C}_n'|^2}\sum_{S_1\neq S_2\in \mathcal{C}_n'}\frac{Z_n^2(\beta,\bQ,\mathbf{0}|\Omega_n)Z_n(\beta,\bQ,\mathbf{\bmu}_{S_1\cup S_2}(\eta)|\Omega_n)}{Z_n(\beta,\bQ,\bmu_{S_1}(A)|\Omega_n)Z_n(\beta,\bQ,\bmu_{S_2}(A))Z_n(\beta,\bQ,\mathbf{0}|\Omega_n)}\\
&=I+II\label{eqn:conditional_second_moment}
\end{align}
Now, it is easy to check by direct calculations that for any $S\in\mathcal{C}_n'$,
\begin{align*}
\frac{\partial \log Z_n(\beta,\bQ,\bmu_S(h)|\Omega_n)}{\partial h}\vert_{h=\eta}&=\E_{\beta,\bQ,\bmu_S(\eta)}\left(\sum_{i\in S}X_i|\Omega_n\right),\\
\frac{\partial^2 \log Z_n(\beta,\bQ,\bmu_S(h)|\Omega_n)}{\partial h^2}\vert_{h=\eta}&=\mathrm{Var}_{\beta,\bQ,\bmu_S(\eta)}\left(\sum_{i\in S}X_i|\Omega_n\right)
\end{align*}
This implies that, the first term of \eqref{eqn:conditional_second_moment} can be bounded similar to part (I) as
\begin{align}
I\leq \frac{1}{|\mathcal{C}_n'|^2}\sum_{S\in \mathcal{C'}}\exp\left(4\frac{A^2}{2}\mathrm{Var}_{\beta,\bQ,\mu_S(2\eta)}\left(\sum_{i\in S}X_i|\Omega_n\right)\right)\label{eqn:conditional_second_moment_I}
\end{align}
for some $0\leq \eta\leq A$ 
Similarly, the second term of \eqref{eqn:conditional_second_moment} can be written as
\begin{align}
II=\exp\left(\frac{A^2}{2}\left[\begin{array}{c}\mathrm{Var}_{\beta,\bQ,\mathbf{\bmu}_{S_1\cup S_2}(\eta)}\left(\sum_{i\in S_1\cup S_2}X_i|\Omega_n\right)\\-\mathrm{Var}_{\beta,\bQ,\bmu_{S_1}(\eta)}\left(\sum_{i\in S_1}X_i|\Omega_n\right)-\mathrm{Var}_{\beta,\bQ,\bmu_{S_2}(\eta)}\left(\sum_{i\in S_2}X_i|\Omega_n\right)\end{array}\right]\right).\label{eqn:conditional_second_moment_II}
\end{align}
The conclusion then follows using the given assumptions, in a similar manner as in part (I).

\end{enumerate}

\subsection{Proof of Theorem \ref{thm:lower_short_range_small_s}}
\begin{enumerate}
\item[(a)]
With $\pi(A)$ denoting the same prior as in the proof of Theorem \ref{thm:lower_short_range_large_s}, the likelihood ratio $L_{\pi(A)}$ is given by 
\begin{align*}
L_{\pi(A)}({\bf x})=\frac{1}{|\mathcal{C}_n'|}\sum_{S\in \mathcal{C}_n'}\frac{\P_{\bbeta,\bQ,\bmu_S(A)}(\bX={\bf x})}{\P_{\bbeta,\bQ,\mathbf{0}}(\bX={\bf x})}
=\frac{1}{|\mathcal{C}_n'|}\sum_{S\in \mathcal{C}_n'}\frac{\P_{\bbeta,\bQ,\bmu_S(A)}(\bX_S={\bf x}_S)}{\P_{\bbeta,\bQ,\mathbf{0}}(\bX_S={\bf x}_S)},
\end{align*}
where the second inequality follows on noting that the conditional distribution of $\bX_{S^c}$ given $\bX_S$ is the same under both the measures $\P_{\beta,\bQ,\bmu_S(A)}$ and $\P_{\beta,\bQ,\mathbf{0}}$. On letting $A\rightarrow\infty$ gives
\begin{align*}
\lim_{A\rightarrow\infty}L_\pi({\bf x})=\frac{1}{|\mathcal{C}_n'|}\sum_{S\in \mathcal{C}_n'}\frac{\mathbf{1}({\bf x}_s={\bf 1})}{\P_{\bbeta,\bQ,\mathbf{0}}(\bX_S={\bf x}_S)}=:L_\infty, \text{ say}.
\end{align*}
Since, by Lemma \ref{lemma:second_moment_monotonicity}, $\E_{\beta,\bQ,\mathbf{0}} L_{\pi(A)}^2$ is a non-decreasing function of $A$, and $L_{\pi(A)}\le  \frac{1}{\P_{\beta,Q,0}(\bX_S={\bf x}_S)}$ which is bounded in $A$, using Dominated Convergence we have
\begin{align}\label{eq:s_less}
\notag \E_{\beta,\bQ,\mathbf{0}} L_{\pi(A)}^2&\le \lim_{A\rightarrow\infty}\E_{\beta,\bQ,\mathbf{0}} L_{\pi(A)}^2\\
&=\E_{\beta,\bQ,\mathbf{0}} L_\infty^2\\
\notag&=\frac{1}{|\mathcal{C}_n'|^2}\sum_{S\in \mathcal{C}_n'}\frac{1}{\P_{\bbeta,\bQ,\mathbf{0}}(\bX_S={\bf 1})}+\frac{1}{|\mathcal{C}_n'|^2}\sum_{S_1,S_2\in \mathcal{C}_n'}\frac{\P_{\bbeta,\bQ,\mathbf{0}}(\bX_{S_1\cup S_2}={\bf 1})}{\P_{\bbeta,\bQ,\mathbf{0}}(\bX_{S_1}={\bf 1}) \P_{\bbeta,\bQ,\mathbf{0}}(\bX_{S_2}={\bf 1})}\\
& \le \frac{2^s}{|\mathcal{C}_n'|}+\sup_{S_1\ne S_2}\frac{\P_{\bbeta,\bQ,\mathbf{0}}(\bX_{S_1\cup S_2}={\bf 1})}{\P_{\bbeta,\bQ,\mathbf{0}}(\bX_{S_1}={\bf 1}) \P_{\bbeta,\bQ,\mathbf{0}}(\bX_{S_2}={\bf 1})}.
\end{align}
The first term in the RHS of \eqref{eq:s_less} is small since $\log{|\mathcal{C}_n'|}\geq C_l\log{n}$ and $s\leq c\log n$ for a small enough $c>0$. The second term converges to $1$ using the given hypothesis. Thus we have $ \E L_\infty^2=1+o(1)$, and so the proof is complete.

\item[(b)]
It suffices to show that $\E_{\beta,\bQ,\mathbf{0}} L_{\pi(A)}^2=O(1)$. We can assume that $\PZ (\Omega_n)\geq \kappa/2$ for all large $n$ and some $\kappa>0$. Using \eqref{eq:s_less} it suffices to show that
\begin{align}\label{eq:combine}
\sup_{S_1\ne S_2\in \mathcal{C}_n'}\frac{\P_{\beta,\bQ,\mathbf{0}}(\bX_{S_1}={\bf 1},\bX_{S_2}={\bf 1})}{\P_{\beta,\bQ,\mathbf{0}}(\bX_{S_1}={\bf 1})\P_{\beta,\bQ,\mathbf{0}}(\bX_{S_2}={\bf 1})}\le \frac{4}{\kappa^2}+o(1).
\end{align}
To this effect, for $i=1,2$ a simple inclusion gives
\[\P_{\beta,\bQ,\mathbf{0}}(\bX_{S_i}={\bf 1})\ge \P_{\beta,\bQ,\mathbf{0}}(\bX_{S_i}={\bf 1},\Omega_n)\ge \frac{\kappa}{2}\P_{\beta,\bQ,\mathbf{0}}(\bX_{S_i}={\bf 1}|\Omega_n),\]
and the FKG inequality subsequently gives
\[\P_{\beta,\bQ,\mathbf{0}}(\bX_{S_1}={\bf 1},\bX_{S_2}={\bf 1})\le \P_{\beta,\bQ,\mathbf{0}}(\bX_{S_1}={\bf 1},\bX_{S_2}={\bf 1}|\Omega_n).\]
Combining these two observations along with the given hypothesis, \eqref{eq:combine} follows.


\end{enumerate}

\subsection{Proofs of Lemmas from~\ref  {section:supporting_lemmas}}\label{sec:pfsecsupplem}

The proofs of Lemmas~\ref{lemma:GHS}--\ref{lemma:chatterjee} follow from the references cited in the statements themselves.

\subsubsection{Proof of Lemma~\ref{lemma:chatterjee_high_temp}}

A direct computation gives
\begin{align*}
d_{TV}\left(\P^{(i)}(\cdot|\bx_{-i}),\P^{(i)}(\cdot|\by_{-i})\right)&=\frac{1}{2}\Big|\tanh\left(\beta \sum_{j}\bQ_{ij}x_j+\mu_i\right)-\tanh\left(\beta\sum_{j}\bQ_{ij}y_j+\mu_i\right)\Big|\\
&\le \beta \sum_{j}\bQ_{ij}1\{x_j\ne y_j\}.
\end{align*}
This, along with assumption $0\leq \beta \|\bQ\|_2<1$, on invoking \citep{chatterjee2005concentration}[Theorem 4.3] gives
the desired conclusion.

\subsubsection{Proof of Lemma~\ref{lemma:second_moment_monotonicity}}

	Note that
	\begin{align*}
	k^2\E_{\mathbf{\beta,\bQ,\mathbf{0}}}(L_{\pi}^2)&=\sum_{l}\frac{Z_n(\beta,\bQ,\mathbf{0})Z_n(\beta,\bQ,2\bmu_{l})}{Z_n^2(\beta,\bQ,\bmu_{l})}+\sum_{l_1\neq l_2}\frac{Z_n(\beta,\bQ,\mathbf{0})Z_n(\beta,\bQ,\bmu_{l_1}+\bmu_{l_2})}{Z_n(\beta,\bQ,\bmu_{l_1})Z_n(\beta,\bQ,\bmu_{l_2})}\\
	&=\sum_{l}\exp\left(\log Z_n(\beta,\bQ,\mathbf{0}))+\log Z_n(\beta,\bQ,2\bmu_{l})-2\log Z_n(\beta,\bQ,\bmu_{l})\right)\\
	+&\sum_{l_1\neq l_2}\exp\left(\log Z_n(\beta,\bQ,\mathbf{0}))+\log{Z_n(\beta,\bQ,\bmu_{l_1}+\bmu_{l_2})}-\log{Z_n(\beta,\bQ,\bmu_{l_1})-\log{Z_n(\beta,\bQ,\bmu_{l_2})}}\right)
	\end{align*} fix any coordinate $l=1,\ldots,k$ 
 and consider  $k^2\E_{\mathbf{\beta,\bQ,\mathbf{0}}}(L_{\pi}^2)$ as a function of the $n$ coordinates of $\bmu_l$ fixing the rest coordinates. We note that it is enough to show that each coordinate of this gradient is non-negative in the direction of any vector in $\left(\mathbb{R}^+\right)^{n}$. Being a sum of exponentials, it is sufficient to individually consider each exponent and show the same conclusion desired above. A typical such term is one of two types:
 \begin{align}
 2\frac{\partial \log{Z_n(\beta,\bQ,\bmu)}}{\partial \bmu}\vert_{\bmu=2\bmu_l}-2\frac{\partial \log{Z_n(\beta,\bQ,\bmu)}}{\partial \bmu}\vert_{\bmu=\bmu_l} \label{eqn:derivative_1}\quad \text{or} \\
  \frac{\partial \log{Z_n(\beta,\bQ,\bmu+\bmu_{l'})}}{\partial \bmu}\vert_{\bmu=\bmu_l}-\frac{\partial \log{Z_n(\beta,\bQ,\bmu)}}{\partial \bmu}\vert_{\bmu=\bmu_l}, l\neq l'. \label{eqn:derivative_2}
 \end{align}
 However, by mean value theorem, the $i^{\mathrm{th}}$ coordinate of $2\frac{\partial \log{Z_n(\beta,\bQ,\bmu)}}{\partial \bmu}_{\bmu=2\bmu_l}-2\frac{\partial \log{Z_n(\beta,\bQ,\bmu)}}{\partial \bmu}_{\bmu=\bmu_l}$ equals $\bmu_l^T\mathrm{Var}_{\beta,\bQ,\pmb{\eta}}(\bX)\mathbf{e}_i$ for some $\pmb{\eta}$ lying on the line joining $\bmu_l$ and $2\bmu_l$ and $\mathbf{e}_i$ denoting the $i^{\mathrm{th}}$ unit vector in $\mathbb{R}^{n}$. This implies that all coordinates of $\pmb{\eta}$ are positive. Consequently, each coordinate of $\mathrm{Var}_{\beta,\bQ,\pmb{\eta}}(\bX)$ is positive, since $\beta\bQ_{j_1j_2}\geq 0$ for all $j_1,j_2\in [n]$ which gives $\mathrm{Cov}_{\beta,\bQ,\pmb{\eta}}(X_{j_1}X_{j_2})\geq 0$ by GKS inequality (Lemma \ref{lemma:GKS}). This proves that the first term \eqref{eqn:derivative_1} has positive coordinates. A similar proof works for the second term \eqref{eqn:derivative_2}.
 
\subsubsection{Proof of Lemma~\ref{lemma:expectation_vs_externalmag}}
	Note that
	\begin{align*}
	\E_{\beta,\bQ,\bmu}(X_i)=&\E_{\beta,\bQ,\bmu}\tanh(\beta \sum_{j\in [n]}\bQ_{ij}X_j+\mu_i)\\\ge &\E_{\beta,\bQ,\bmu}\tanh(\beta \sum_{j\in [n]}\bQ_{ij}X_j)+(1-\tanh(\beta \|\bQ\|_{\infty\rightarrow\infty})\tanh(\mu_i),
	\end{align*}
	from which the result follows on noting that 
	$$\E_{\beta,\bQ,\bmu}\tanh(\beta \sum_{j\in [n]}\bQ_{ij}X_j)\ge \E_{\beta,\bQ,{\bf 0}}\tanh(\beta \sum_{j\in [n]}\bQ_{ij}X_j)=0.$$
	In the above display, the first inequality follows the fact that the Ising model is stochastically non decreasing in $\bmu$, along with the observation that the function $(x_j,j \in [n])\mapsto \tanh(\beta \sum_{j\in [n]}\bQ_{ij}x_j)$ is non decreasing, and the second equality follows by symmetry of the Ising model when $\bmu={\bf 0}$.

\section{Proofs of Theorems~\ref{thm:dense_regular}--\ref{thm:lattice}}\label{sec:corpf}
This section will be devoted to proving Theorems~\ref{thm:dense_regular}--\ref{thm:lattice}. Towards that direction, we first mention a collection of lemmas, whose proofs we defer. 

\subsection{Some Auxiliary Lemmas}
The first lemma describes some relevant properties of a fixed-point equation which arises naturally in Mean-Field Ising models (see~\cite{basak2017universality} for details) and will be useful for the subsequent discussion.

\begin{lemma}[See Page 10 in~\cite{AmirAndrea2010}]\label{lem:fixsol}
	Consider the fixed point equation 
	\begin{equation}\label{eq:mainfix}
\phi(x)=0,\text{ where } \phi(x):=x-\tanh(\beta x+B).
\end{equation}
	\begin{enumerate}
		\item[(a)] (High temperature) If $\beta<1$, then~\eqref{eq:mainfix} has a unique solution at $t=0$, and $\phi'(0)>0$.
		\item[(b)] (Low temperature) If $\beta>1$, then \eqref{eq:mainfix} has two non zero roots $\pm t$ of this equation, where $t>0$, and $\phi'(\pm t)>0$.
		\item[(c)] (Critical temperature) If $\beta=1$,  then~\eqref{eq:mainfix} has a unique solution at $t=0$, and $\phi'(0)=0$.
	\end{enumerate} 
\end{lemma}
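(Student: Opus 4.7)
My plan is to reduce all three parts to elementary one-variable calculus on
\begin{align*}
\phi(x) = x - \tanh(\beta x + B), \qquad \phi'(x) = 1 - \beta\,\mathrm{sech}^2(\beta x + B), \qquad \phi''(x) = 2\beta^2\,\mathrm{sech}^2(\beta x + B)\tanh(\beta x + B).
\end{align*}
The stated location $t = 0$ of the root in parts (a) and (c) forces the implicit convention $B = 0$, under which $\phi$ is odd, $\phi(0) = 0$, and $\phi(x) \sim x$ as $|x| \to \infty$. I will work throughout with this convention, noting that all roots then come in pairs $\pm t$ and that $\phi'$ is even.

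For parts (a) and (c), the proof reduces to showing $\phi$ is strictly monotone, which instantly yields uniqueness of the root $t = 0$. In part (a), the bound $\mathrm{sech}^2 \le 1$ together with $\beta < 1$ gives $\phi'(x) \ge 1 - \beta > 0$ uniformly, so $\phi$ is strictly increasing on $\mathbb{R}$, and evaluating at $0$ delivers $\phi'(0) = 1 - \beta > 0$. In part (c), a direct simplification yields $\phi'(x) = 1 - \mathrm{sech}^2(x) = \tanh^2(x) \ge 0$, vanishing only at $x = 0$, which is enough for strict monotonicity, and $\phi'(0) = 0$ is immediate.

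Part (b) is the only case requiring a bit of care. Here $\phi'(0) = 1 - \beta < 0$ while $\phi(x) \to +\infty$ as $x \to +\infty$, so the intermediate value theorem produces a positive zero $t$. For uniqueness of the positive zero and for \emph{strict} positivity of $\phi'(t)$ (rather than merely nonnegativity), I will invoke strict convexity of $\phi$ on $(0,\infty)$, which follows from $\phi'' > 0$ there. Strict convexity combined with $\phi(0) = 0$ rules out two positive zeros, and combined additionally with $\phi'(0) < 0$ forces the lone positive zero $t$ to satisfy $\phi'(t) > 0$, since otherwise $\phi$ could not rise back up to zero from being strictly negative. The companion statement at $-t$ then follows from oddness of $\phi$ and evenness of $\phi'$. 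I do not anticipate any real obstacle; this is essentially a $\tanh$-sketching exercise, treated as folklore in Dembo--Montanari, and the only mildly delicate point is extracting the strict sign of $\phi'(\pm t)$ via the convexity argument above rather than a bare application of the intermediate value theorem.
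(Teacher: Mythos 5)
Your proof is correct, and it is worth noting that the paper itself offers no argument for this lemma: it is simply cited from page 10 of \cite{AmirAndrea2010}, so your self-contained calculus verification is a genuinely different (and more explicit) route. The substance checks out: the derivative formulas are right, $\phi'\ge 1-\beta>0$ disposes of part (a), $\phi'(x)=\tanh^2(x)$ with an isolated zero at the origin gives strict monotonicity and $\phi'(0)=0$ in part (c), and in part (b) the combination of $\phi(0)=0$, $\phi'(0)=1-\beta<0$, $\phi\to+\infty$, and strict convexity on $(0,\infty)$ (via $\phi''=2\beta^2\sech^2(\beta x)\tanh(\beta x)>0$) correctly yields existence, uniqueness of the positive root, and the strict sign $\phi'(t)>0$ (if $\phi'(t)\le 0$, then $\phi'$ increasing would force $\phi'<0$ on $(0,t)$ and hence $\phi(t)<0$, a contradiction); oddness then handles $-t$. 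Your decision to read the lemma with the convention $B=0$ is also the right call: as literally stated with general $B\ne 0$, parts (a) and (c) would be false (the unique fixed point is not at $0$), and the paper's later usage — $t$ is defined in Section 6 as the nonnegative root of $x=\tanh(\beta x)$, with the external field entering separately through expressions like $\sech^2(\beta t+\mu_i)$ — confirms that $B=0$ is the intended reading. What your approach buys is a complete, elementary argument in place of an external citation; what the citation buys the paper is brevity and the broader context in \cite{AmirAndrea2010}, where the same fixed-point analysis is carried out allowing a nonzero field.
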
 
In the rest of the paper, $t$ will always denote the nonnegative root of $\phi(\cdot)$ as defined in~\ref  {lem:fixsol}.

For the remaining results we need a few notation. For a graph $\mathbb{G}_n=(\mathcal{V}_n,\mathcal{E}_n)$ with vertex set $\mathcal{V}_n$ and edge-set $\mathcal{E}_n$, let $\mathbf{G}_n$ denote the adjacency matrix with its $(i,j)^{\mathrm{th}}$ element denoted by $\mathbf{G}_n(i,j)$. Let $d_i$ denote the degree of vertex $i\in \mathcal{V}_n$, $\overline{d}$ the average degree, and $d_{\max}$ the maximum degree. For an Ising model $\P_{\beta,\bQ,\bmu}$ defined on $\mathbb{G}_n$ we will use the convention that $\bQ=\mathbf{G}/\overline{d}$. Finally, we denote the $i^{\mathrm{th}}$ largest eigenvalue of a square matrix $\mathbf{M}$ by $\lambda_i(\mathbf{M})$. With these notation, the following lemma establishes sharp bounds on the spin-spin correlations for some Mean-Field type Ising models. These will serve as quintessential ingredients for verifying the conditions of Theorem \ref{thm:lower_short_range_large_s} for the examples in Section \ref{section:Examples}.

\begin{lemma}\label{lem:slarge}
	Let $\alpha_n:=\sqrt{\frac{\log{n}}{\overline{d}}}$ and assume that $\max_{i\in [n]}\Big|\frac{d_i}{\overline{d}}-1\Big|\to 0$. 
	\begin{enumerate}
	\item[(a)] If $0\leq\beta<1$ and $\overline{d}\gtrsim (\log n)^2$ then we have:
	$$|\EZ (X_iX_j)|\lesssim \begin{cases}\frac{1}{\overline{d}}& \mbox{if }(i,j)\in \mathcal{E}_n\\ \bigg(\max_{(i,j)} (\bQ^3)_{ij}\bigg)+\alpha_n^4 &\mbox{if }(i,j)\notin \mathcal{E}_n \end{cases},$$
	where $\mathcal{E}_n$ denotes the set of edges in $\mathbb{G}_n$.
	
	\item[(b)] If $\beta>1$ and $\limsup\limits_{n\to\infty} \max_{i\in [n]}\beta \sech^2(\beta t+\mu_i)< 1$ and the assumptions in Lemma \ref{lem:mcontrol} [part (b)(i) or (b)(ii)] hold, then we have:
	
	(i)\begin{align*}
	\big|\CG (X_i,X_j|\bar{\bX}\geq 0) &\lesssim \begin{cases} \frac{1}{\overline{d}}& \mbox{if }(i,j)\in \mathcal{E}_n\\ \bigg(\max_{(i,j)}(\bQ^3)_{ij}\bigg)+\alpha_n^3 &\mbox{if }(i,j)\notin \mathcal{E}_n \end{cases}.
	\end{align*}
(ii) $$\max_{i\in [n]}|\VG (X_i|\bar{\bX}\geq 0)-\sech^2(\beta t+\mu_i)|\lesssim \alpha_n.$$

	\item[(c)] If $\beta=1$, \eqref{eq:connected} holds, and the graph $\mathbb{G}_n$ satisfies  
	\[\bd\gtrsim \sqrt{n}(\log{n})^5,\quad \max_{i\in [n]}\Big|\frac{d_i}{\overline{d}}-1\Big|\lesssim \alpha_n,\] then we have
	$|\EZ [X_iX_j]|\lesssim n^{-1/2}.$
		\end{enumerate}
\end{lemma}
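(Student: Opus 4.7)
The engine for all three parts will be the Gibbs cavity identity: conditioning on $\bX_{-j}$ gives $\E[X_j\mid \bX_{-j}] = \tanh(\beta m_j + \mu_j)$, hence for $i\neq j$,
\[
\EG[X_iX_j] = \EG\!\left[X_i \tanh(\beta m_j + \mu_j)\right].
\]
Taylor-expanding $\tanh$ through third order converts this into a self-consistent relation for the pair-correlations $C_{ij}=\EG[X_iX_j]$ of the schematic form $C = \beta \bQ C + (\text{cubic correction}) + R$, where the cubic correction involves $\EG[X_i m_j^3]$ and $R$ is a quintic remainder controlled by sub-Gaussian concentration of $m_j$ around its conditional mean. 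This concentration is available because $\bd\gtrsim(\log n)^2$ and the degree-regularity hypothesis $\max_i |d_i/\bd - 1|\to 0$ yields $\|\bQ\|_{\infty\to\infty}\leq 1+o(1)$.

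\textbf{Parts (a) and (b).} When $\beta<1$, $\beta\bQ$ is a strict contraction on $\ell^\infty$ and the above recursion can be iterated. One iteration gives the edge bound $C_{ij}\lesssim \beta\bQ_{ij}=\beta/\bd$; two further iterations push the leading non-edge contribution to $\beta^3(\bQ^3)_{ij}$. The cubic correction $\EZ[X_i m_j^3]$ is handled by expanding $m_j^3=\sum_{k_1,k_2,k_3}\bQ_{jk_1}\bQ_{jk_2}\bQ_{jk_3}X_{k_1}X_{k_2}X_{k_3}$ and bounding the resulting four-point functions via $\EZ m_j^2\lesssim 1/\bd$ together with a logarithmic overhead from sub-Gaussian tails, yielding the stated $\alpha_n^4$ error. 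For part (b), the same scheme is carried out conditionally on $\{\bar{\bX}\geq 0\}$: one linearizes $\tanh(\beta m_j + \mu_j)$ about $\beta t + \mu_j$ (where $t$ is the positive root from Lemma~\ref{lem:fixsol}), so the effective contraction rate becomes $\beta\sech^2(\beta t + \mu_j)$, which is strictly less than one uniformly by hypothesis. The conditional concentration of $m_j$ about $t$ needed here is imported from Lemma~\ref{lem:mcontrol}, and (b)(ii) follows from the same expansion applied to
\[
\VG[X_i\mid\bar{\bX}\geq 0] = 1 - \bigl(\EG[X_i\mid\bar{\bX}\geq 0]\bigr)^2 \approx 1-\tanh^2(\beta t + \mu_i),
\]
together with the $\alpha_n$-approximation of the conditional mean by $\tanh(\beta t + \mu_i)$ coming from Lemma~\ref{lem:mcontrol}.

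\textbf{Part (c).} At criticality the previous argument collapses because $\phi'(0)=0$, so $\beta\bQ$ no longer contracts. The replacement heuristic is that the macroscopic magnetization $\bar{\bX}$ dominates the correlation structure, living on the critical scale $n^{-1/4}$ rather than the Gaussian $n^{-1/2}$. The strategy is to use the sharper regularity $\max_i|d_i/\bd - 1|\lesssim \alpha_n$ and connectivity~\eqref{eq:connected} to approximate $m_i$ by $\bar{\bX}$ uniformly in $i$, deduce
\[
\EZ[X_iX_j] = \EZ\!\left[\tanh(\beta m_i)\tanh(\beta m_j)\right] + o(n^{-1/2}) \lesssim \EZ[\bar{\bX}^2] + o(n^{-1/2}),
\]
and then invoke the sharp critical moment bound $\EZ[\bar{\bX}^2]\asymp n^{-1/2}$, which is exactly the content of Lemma~\ref{lem:altbeh} and which requires the strong regularity $\bd\gtrsim \sqrt n(\log n)^5$.

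\textbf{Main obstacle.} The chief technical difficulty is part (c): at criticality one cannot decouple $X_i$ and $X_j$ by a single step of local conditioning, so the bound $n^{-1/2}$ must come from a delicate analysis of $\bar{\bX}$ on the critical scale rather than from contractive iteration, which is where Lemma~\ref{lem:altbeh} is essential. A secondary difficulty in (b) is that the usual Stein/Dobrushin concentration estimates degrade under the conditioning $\{\bar{\bX}\geq 0\}$; resolving this requires the pure-phase representation from Lemma~\ref{lem:mcontrol} to transport unconditional bounds to the conditional measure uniformly in the vertex $i$.
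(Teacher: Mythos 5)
Your plan for parts (a) and (b) is essentially the paper's argument: the cavity identity $\E[X_j\mid \bX_{-j}]=\tanh(\beta m_j+\mu_j)$ plus a Taylor expansion turns the pair correlation into a self-consistent inequality for the maximal correlation, which is closed using the contraction $\beta<1$ (respectively $\beta\sech^2(\beta t+\mu_i)<1$ in the low-temperature case) together with the uniform moment bounds $\EZ\max_i|m_i|^k\lesssim\alpha_n^k$ (and their conditional analogues) from Lemma \ref{lem:mcontrol}; one further splitting of $\EZ[m_im_j]$ into edge and non-edge contributions produces the $(\bQ^3)_{ij}$ term, exactly as in the paper's bootstrap with the maximal correlations $\rho_n^{(1)},\rho_n^{(2)}$. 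One step you leave under-specified in (b): the cavity identity does not commute with conditioning on the global event $\{\bar{\bX}\geq 0\}$, because that event involves the spin being resampled. The paper resolves this with a leave-one-out comparison (see \eqref{eq:margin}), showing that conditioning on $\{\bar{\bX}\geq 0\}$ and on $\{\bar{\bX}_i>0\}$ (the average excluding vertex $i$) changes conditional expectations of bounded functions by only $O(e^{-\rho n})$ uniformly in $i$; importing conditional concentration of $m_i$ about $t$ from Lemma \ref{lem:mcontrol} does not by itself give you this, so this step needs to be made explicit.

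The genuine gap is in part (c). You reduce the problem, as the paper does, to the critical second-moment bound $\EZ[\bar{\bX}^2]\lesssim n^{-1/2}$, but you assert that this bound "is exactly the content of Lemma \ref{lem:altbeh}". It is not: Lemma \ref{lem:altbeh} is a statement about the alternative measure $\PG$ with $\bmu\in\Xi(\mathcal{C}_n,s,A)$ and $sA\gg n^{1/4}$, and it gives a lower bound on $n^{1/4}\bar{\bX}$ under the signal (it is used to prove the power of the magnetization test); it says nothing about the null second moment of $\bar{\bX}$, and leaning on it would in any case be circular in spirit, since its proof rests on the same critical-scaling machinery. As written, your chain therefore has no source for its key estimate. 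In the paper this input is imported from \cite{Deb2020} (their Lemma 2.4(c), together with their Equation (4.25) relating $\bar{\mm}$ to $\bar{\bX}$, giving $\EZ\bar{\mm}^2\lesssim n^{-1/2}$ under $\bd\gtrsim\sqrt{n}(\log n)^5$), after which the expansion of $\tanh(m_i)$ about $\bar{\mm}$ with $\EZ\max_i|m_i-\bar{\mm}|^2\lesssim\alpha_n^2$ from Lemma \ref{lem:mcontrol}(c)(ii) finishes the argument exactly as you sketch. To repair the proposal, either cite that external result or prove the critical moment bound separately (e.g.\ via an exchangeable-pair/Stein argument at $\beta=1$); it does not follow from Lemma \ref{lem:altbeh}.
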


Our next result establishes some crucial probability estimates for Mean-Field type Ising models. These will serve as quintessential ingredients for verifying the conditions of Theorem \ref{thm:lower_short_range_small_s} for the examples in Section \ref{section:Examples}.

\begin{lem}\label{lem:smallinsig} 
	Assume that $\max_{i\in [n]}|d_i/\bd-1|\to 0$.
\begin{enumerate}
	\item[(a)] If $0\leq \beta< 1$, then for any fixed $c>0$ and any $s$ satisfying $s\le c\log n$, we have
	\begin{equation}\label{eq:smallinsig}\lim_{n\rightarrow\infty}\sup_{S:|S|=s,\ {\bf a}\in \{-1,1\}^s}\Bigg|\frac{\PZ (\bX_S={\bf a})}{2^{-s}}-1\Bigg|=0,\end{equation}
	provided $\bd\gg (\log{n})^4$.
	
	\item[(b)]
	If $\beta=1$, same conclusion as part (a) holds provided $\bd \gg (\log n)^{10}$.
	
	\item[(c)] 
 If $\beta>1$ and the assumptions in either part (b)(i) or part (b)(ii) of Lemma~\ref{lem:mcontrol} hold, then we get:
	\begin{equation}\label{eq:smallinsignouniq}\lim_{n\rightarrow\infty}\sup_{S:|S|=s,\ {\bf a}\in \{-1,1\}^s}\Bigg|\frac{\PZ (\bX_S={\bf a}|\bar{\bX}\geq 0)}{g({\bf a},s)}-1\Bigg|=0,\quad g({\bf a},s):=\frac{\exp(\beta t\sum_{i=1}^{s} a_i)}{\sum_{{\bf b}\in \{-1,1\}^{s}} \exp(\beta t\sum_{i=1}^{s} b_i)},
	\end{equation}
	provided $\overline{d}\gg (\log n)^4$.
	\end{enumerate}
\end{lem}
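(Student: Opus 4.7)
The plan is to write the joint probability $\PZ(\bX_S = \mathbf{a})$ as a telescoping product of conditional probabilities along an arbitrary ordering $i_1,\ldots,i_s$ of $S$:
\[
\PZ(\bX_S = \mathbf{a}) = \prod_{k=1}^s \frac{1 + a_k \mu_k}{2}, \qquad \mu_k := \EZ\bigl[X_{i_k} \mid X_{i_1}=a_1,\ldots,X_{i_{k-1}}=a_{k-1}\bigr],
\]
with the event $\{\bar{\bX} \geq 0\}$ appended to the conditioning in parts (b) and (c). The task then reduces to showing $\sum_{k=1}^s |\mu_k - \mu_k^\star| = o(1)$ uniformly in $S, \mathbf{a}$, where the target value is $\mu_k^\star = 0$ in parts (a),(b) (by the $\bmu=\mathbf{0}$ symmetry) and $\mu_k^\star = t$ in part (c) (since each spin has conditional mean $\tanh(\beta t) = t$ under the positive pure phase, via Lemma \ref{lem:slarge}(b)(ii)). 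Given such a bound, expanding $\log(1 + a_k \mu_k) = a_k\mu_k + O(\mu_k^2)$ and comparing with the target product $2^{-s}$, respectively $g(\mathbf{a},s) = \prod_k (1 + a_k t)/2$, yields \eqref{eq:smallinsig} and \eqref{eq:smallinsignouniq}.

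For part (a), conditioning on $X_{i_1}=a_1,\ldots,X_{i_{k-1}}=a_{k-1}$ is equivalent to studying an Ising model on $[n]\setminus\{i_1,\ldots,i_{k-1}\}$ with induced external field $h_v = \beta\sum_{j<k} a_j \bQ_{v,i_j}$, of sup-norm at most $\beta s/\bd$ under the mean-field structure together with $\max_i|d_i/\bd - 1|\to 0$. Combining the first-order response identity $\partial_{h_j}\EZ X_{i_k} = \mathrm{Cov}(X_{i_k},X_j)$ with GHS monotonicity (Lemma \ref{lemma:GHS}) and the pairwise correlation bound of Lemma \ref{lem:slarge}(a) (for which the off-edge term $\max_{(i,j)}(\bQ^3)_{ij}+\alpha_n^4$ is of order $1/\bd$ on mean-field graphs), one obtains $|\mu_k|\lesssim\sum_{j<k}|\EZ X_{i_k}X_{i_j}|\lesssim s/\bd$. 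Summing yields $\sum_k|\mu_k|\lesssim s^2/\bd = o(1)$ whenever $s\le c\log n$ and $\bd\gg(\log n)^4$, closing out part (a).

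Part (c) proceeds in exactly the same spirit under the conditional measure $\PZ(\cdot\mid\bar{\bX}\geq 0)$. Lemma \ref{lem:slarge}(b)(ii) (together with the identity $\mathrm{Var}(X_i) = 1-(\EZ X_i)^2$ for $X_i\in\{\pm 1\}$, and the positivity of the conditional phase) identifies $\mu_k^\star = \tanh(\beta t) = t$, while Lemma \ref{lem:slarge}(b)(i) supplies the conditional spin--spin correlation bound needed to mimic the part (a) argument and obtain $|\mu_k - t|\lesssim s/\bd$. Telescoping, the product $\prod_k(1+a_k\mu_k)/2$ matches $g(\mathbf{a},s) = \prod_k(1+a_kt)/2$ up to a multiplicative $1+o(1)$.

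The main obstacle is part (b), the critical case $\beta=1$. A naive replay of part (a) fails because Lemma \ref{lem:slarge}(c) delivers $|\EZ X_iX_j|\lesssim n^{-1/2}$ only under the stronger condition $\bd\gtrsim\sqrt{n}(\log n)^5$, whereas we are given only $\bd\gg(\log n)^{10}$. The plan is to decouple the local conditioning from the long-range critical fluctuations by first conditioning on the total magnetization $\bar{\bX}$, whose typical size at criticality is $O(n^{-1/4})$: given $\bar{\bX} = m$ with $|m|$ of this order, the conditional law of the remaining spins is an Ising model with effective external field of magnitude $\beta|m| = O(n^{-1/4})$, which is subcritical and therefore admits $O(1/\bd)$ pairwise correlation bounds as in part (a). Averaging over the typical values of $\bar{\bX}$, and using the concentration of $\bar{\bX}$ near zero together with the slack offered by $\bd\gg(\log n)^{10}$ to absorb polylogarithmic factors, should then deliver $\sum_k|\mu_k| = o(1)$. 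The technical heart of this step would be a Hubbard--Stratonovich representation or a direct Taylor expansion of the conditional log-partition function in the local field, to quantify the interaction between the spins in $S$ and the global magnetization.
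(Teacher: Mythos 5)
Your telescoping decomposition is a genuinely different route from the paper, but the step that controls the conditional means is not justified, and that is where the argument breaks. In part (a), conditioning on $X_{i_1}=a_1,\ldots,X_{i_{k-1}}=a_{k-1}$ induces on the reduced graph an external field $h_v=\beta\sum_{j<k}a_j\bQ_{v,i_j}$ whose entries carry the signs of the $a_j$'s; the GHS inequality (Lemma~\ref{lemma:GHS}) compares covariances only for coordinatewise \emph{nonnegative} fields, and Lemma~\ref{lem:slarge}(a) bounds covariances under the zero-field measure $\PZ$ on the full graph, not under the tilted, reduced-graph measure you actually need. Writing $\mu_k$ exactly as a ratio of expectations of $X_{i_k}\prod_{j<k}(1+a_jX_{i_j})$ shows it involves joint moments of up to $k$ spins, i.e.\ essentially the quantity the lemma is trying to control, so the asserted reduction $|\mu_k|\lesssim\sum_{j<k}|\EZ X_{i_k}X_{i_j}|$ is circular as stated (it could be repaired by a Dobrushin-uniqueness/influence-matrix argument when $\beta<1$, but that is a different tool from the ones you invoke, and Lemma~\ref{lem:slarge}(a) also requires $\bd\gtrsim(\log n)^2$-type conditions the reduced, tilted model need not inherit). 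Part (c) is worse: there the conditioning includes the global event $\{\bar{\bX}\geq 0\}$, which does not commute with the telescoping scheme, and Lemma~\ref{lem:slarge}(b) gives means and covariances conditional on $\{\bar{\bX}\geq 0\}$ \emph{only}, not additionally on revealed spins, some of which may equal $-1$; at low temperature no monotonicity tool in the paper bridges that gap.

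Part (b) is where the proposal has no viable route even in outline. Conditioning on the value of $\bar{\bX}$ is a hard constraint, not an exponential tilt, so the conditional law of the remaining spins is not ``an Ising model with external field of order $|m|$''; making such an equivalence-of-ensembles step rigorous (or using Hubbard--Stratonovich, which for non rank-one $\bQ$ such as random regular or Erd\H{o}s--R\'enyi graphs yields an $n$-dimensional auxiliary field, not a scalar) is exactly the missing technical heart, and even granting it, at $\beta=1$ with field $O(n^{-1/4})$ the pairwise correlations are not $O(1/\bd)$ (compare Lemma~\ref{lem:slarge}(c), which itself needs $\bd\gtrsim\sqrt{n}(\log n)^5$). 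The paper's proof avoids conditional means and pairwise correlations altogether: it first notes $\PZ(\bX_S={\bf a})\geq p^{s}$ for a fixed constant $p$, then restricts to the event that the external local fields $m_i(S)=\sum_{j\in S^c}\bQ_{ij}X_j$ concentrate (around $0$ for $\beta\leq 1$, around $t$ for $\beta>1$, via Lemma~\ref{lem:mcontrol}); on that event the Hamiltonian splits so that the dependence on ${\bf a}$ enters only through $\beta\sum_{i\in S}a_im_i(S)+\frac{\beta}{2}\sum_{i,j\in S}\bQ_{ij}a_ia_j$, which is uniformly $o(1)$ (with the weaker critical concentration scale at $\beta=1$, which is precisely why $\bd\gg(\log n)^{10}$ appears), and since the resulting upper and lower bounds are free of ${\bf a}$, normalizing over the $2^s$ spin patterns (or over $g({\bf a},s)$ in the low-temperature case) yields the claim. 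To salvage your plan you would need either to adopt this Hamiltonian-restriction argument or to supply genuinely new uniform conditional-correlation and equivalence-of-ensembles estimates; as written, parts (a) and (c) rest on unjustified monotonicity steps and part (b) is a plan rather than a proof.
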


Our final result in this section establishes precise behavior of average magnetization $\bar{X}$ at critical temperature for some Mean-Field type Ising models -- under the presence of asymptotically vanishing, yet detectable, external magnetization $\bmu$. The application of this result for $s\gtrsim \sqrt{n}/\log{n}$ in these models yields matching sharp upper bounds to the lower bounds developed in Theorem \ref{thm:lower_short_range_large_s}.

\begin{lemma}\label{lem:altbeh}
	Suppose $\bmu\in {\Xi}(\mathcal{C}_n,s,A)$, $\beta=1$, $\bd \gg \sqrt{n}(\log{n})^5$ and $sA \gg n^{1/4}$. Further assume that $\max_{i\in [n]}|d_i/\bd-1|\lesssim \sqrt{\frac{\log{n}}{\bd}}$,  and $\limsup_{n\to\infty}\lambda_2(\bQ)<1$. Then there exists a constant $\delta>0$ such that 
	\begin{align*}
	\lim_{n\to\infty} \PG(n^{1/4}\overline{\bX}\geq \delta k_n)\to 1
	\end{align*}
	where $k_n:=(n^{-1/4}sA)^{1/3}$.
\end{lemma}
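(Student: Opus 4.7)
The plan is to derive, via mean-field analysis, a cubic fixed-point relation $\E_{\beta,\bQ,\bmu}[\overline{\bX}^3] \gtrsim s\tanh(A)/n$ at criticality, and then combine this with a variance bound to obtain a high-probability lower bound.

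First, I would use the conditional law $\P(X_i=1\mid \bX_{-i}) = (1+\tanh(m_i+\mu_i))/2$. Summing over $i$ and taking expectations yields the mean-field identity
\[
\E_{\beta,\bQ,\bmu}[\overline{\bX}] = \frac{1}{n}\sum_i \E_{\beta,\bQ,\bmu}[\tanh(m_i+\mu_i)].
\]
By monotonicity of the Ising measure in $\bmu$, the smallest value of $\E[\overline{\bX}]$ occurs at $\bmu = A\mathbf{1}_S$, so we may assume this WLOG.

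Next, I would show that $m_i$ is uniformly close to $\overline{\bX}$. Writing $\bQ = \mathbf{1}\mathbf{1}^\top/n + \bQ^{\perp}$ and using the assumption $\limsup\lambda_2(\bQ)<1$ together with the near-regularity $\max_i|d_i/\bd - 1|\lesssim \sqrt{\log n/\bd}$, a Chatterjee-type concentration bound (Lemma~\ref{lemma:chatterjee}) applied to the linear form $(\bQ^\perp \bX)_i$ gives $\max_i|m_i-\overline{\bX}|\lesssim \sqrt{\log n/\bd}$ with high probability. Under the hypothesis $\bd \gg \sqrt{n}(\log n)^5$, this error is $o(n^{-1/4}/\log n)$ and therefore much smaller than the expected magnitude of $\overline{\bX}$ we aim to show.

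Substituting $m_i = \overline{\bX}+\epsilon_i$ with $|\epsilon_i|$ small, I would Taylor-expand $\tanh(m_i) = \overline{\bX} - \overline{\bX}^3/3 + O(\overline{\bX}^5 + \epsilon_i)$, using that $\beta=1$. The linear terms cancel after summing, leaving
\[
\tfrac{1}{3}\E[\overline{\bX}^3] \;=\; \frac{1}{n}\sum_{i\in S}\E[\tanh(m_i+A)-\tanh(m_i)] \;+\; (\text{lower-order errors}),
\]
and the sum on the right is bounded below by $(s/n)\tanh(A)\,\E[\sech^2(\overline{\bX}+A)] \gtrsim s\tanh(A)/n$ since $\overline{\bX}$ and $A$ will turn out to be small (the regime $sA\gg n^{1/4}$ with $s\gtrsim \sqrt{n}/\log n$ forces $A\to 0$, so $\tanh(A)\asymp A$). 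This yields $\E[\overline{\bX}^3]\gtrsim sA/n$.

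Finally, I would invoke a variance bound $\mathrm{Var}_{\beta,\bQ,\bmu}(\overline{\bX})\lesssim n^{-1/2}$, derived from spin-spin covariance estimates in the spirit of Lemma~\ref{lem:slarge}(c); GHS (Lemma~\ref{lemma:GHS}) ensures that adding the non-negative field $\bmu\succcurlyeq\mathbf{0}$ does not worsen correlations over the $\bmu=\mathbf{0}$ case. Hence $\overline{\bX}$ concentrates within $O(n^{-1/4})$ of its mean. Combined with $\E[\overline{\bX}^3]\gtrsim sA/n$, this forces $\E[\overline{\bX}]\gtrsim (sA/n)^{1/3}$: if the bulk of $\overline{\bX}$ sat at level $\ll (sA/n)^{1/3}$, then by the variance bound the third moment would be too small. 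A final Chebyshev argument, noting that the ratio of the mean to the standard deviation is $(sA/n)^{1/3}/n^{-1/4} = k_n\to\infty$, gives $\P(n^{1/4}\overline{\bX}\geq \delta k_n)\to 1$.

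The main obstacle is carefully controlling the cubic-order terms in Step 3. Since the linear pieces cancel at criticality, one must ensure that the quintic Taylor remainder $O(\overline{\bX}^5)$, the perturbation $O(\epsilon_i)$ from $m_i\ne \overline{\bX}$, and the fluctuation contribution to $\E[\overline{\bX}^3] - \E[\overline{\bX}]^3$ are all negligible compared to the driving term $sA/n \gg n^{-3/4}$. The precise scaling in the hypothesis $\bd\gg \sqrt{n}(\log n)^5$ is exactly what makes these corrections manageable, and propagating them — in particular verifying the $n^{-1/2}$ variance bound under the tilted (non-centered) measure — is the most delicate part of the argument.
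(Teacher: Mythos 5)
Your overall mean-field strategy (cubic fixed-point relation at $\beta=1$ plus a fluctuation bound) is in the right spirit, but the closing step has a genuine gap. From the identity $\tfrac13\E[\overline{\bX}^3]\gtrsim \frac{s\tanh(A)}{n}+\text{errors}$ you try to conclude $\E[\overline{\bX}]\gtrsim (sA/n)^{1/3}$ using only $\mathrm{Var}_{\beta,\bQ,\bmu}(\overline{\bX})\lesssim n^{-1/2}$ (which is indeed available via GHS and Lemma~\ref{lem:slarge}(c)). But writing $\overline{\bX}=\E\overline{\bX}+Z$, the best you can say from a variance bound and $|Z|\le 2$ is $|\E Z^3|\lesssim \E Z^2\lesssim n^{-1/2}$, so $(\E\overline{\bX})^3\gtrsim sA/n - Cn^{-1/2}$. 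Since the hypothesis only gives $sA\gg n^{1/4}$, i.e.\ $sA/n\gg n^{-3/4}$, the error $n^{-1/2}$ dominates the driving term throughout the regime $n^{1/4}\ll sA\ll n^{1/2}$, and the argument yields nothing there. The same insufficiency hits your error control in the cubic expansion (e.g.\ bounding $\E|\overline{\bX}|^5$ and the centered third-moment contribution): one needs moments of $\overline{\bX}$ at the critical scale $n^{-1/4}$ weighted by $\sum_i\mu_i$, not just a variance bound. This is exactly why the paper proves the sixth-moment estimate $\EG(n^{1/4}\overline{\bX})^6\lesssim n^{-1/2}(\sum_i\mu_i)^2$ by a Stein exchangeable-pair argument under the tilted measure, together with bounds on $\sum_i(d_i/\bd-1)X_i$ and on $\sum_i(X_i-\tanh(m_i+\mu_i))$, and then argues \emph{pointwise in probability}: it shows $n\overline{\bX}^3\ge \rho\sum_i\mu_i-o_p(\sum_i\mu_i)$ directly, never passing through $\E\overline{\bX}$, which sidesteps the mean-versus-third-moment issue entirely.

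A second problem: your claim that $sA\gg n^{1/4}$ ``forces $A\to 0$'' is false — the hypothesis is only a lower bound on $sA$, and $\bmu\in\Xi(\mathcal{C}_n,s,A)$ allows arbitrarily large signals. With $\bmu=A\mathbf{1}_S$ and $A$ large, the concentration inputs you rely on (uniform closeness of $m_i$ to $\overline{\bX}$, smallness of $\overline{\bX}$, Taylor expansion at criticality) are not justified: the relevant lemma (Lemma~\ref{lem:mcontrol}(c)(ii)) requires $\lVert\bQ\bmu\rVert_\infty\lesssim\sqrt{\log n/\bd}$, which can fail for large $A$. The paper handles this by a monotonicity truncation, replacing $\mu_i$ by $\min\bigl(A,\sqrt{\bd}/(s\sqrt{\log n})\bigr)$ on $S$, which both preserves $\sum_i\mu_i\gg n^{1/4}$ and makes the field small enough for the concentration estimates; some such truncation is needed in your write-up as well. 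Relatedly, your appeal to Lemma~\ref{lemma:chatterjee} to get $\max_i|m_i-\overline{\bX}|\lesssim\sqrt{\log n/\bd}$ is not a valid application of that lemma (it concentrates conditionally centered spin sums, not linear forms about $\overline{\bX}$); the statement you want is essentially Lemma~\ref{lem:mcontrol}(c)(ii), whose proof is a separate iterative spectral argument and also carries an extra $\sqrt{\bar{\bmu}}$ term you would need to track.
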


Our final lemma concerns the correlation decay property in the context of Ising models on lattices  which serves as the main tool in the proof of Theorem \ref{thm:lattice}. We refer to \cite{aizenman1987phase,duminil2016new,duminil2017lectures,duminil2017sharp,mukherjee2019testing} for more details. We use the notation used in Section \ref{section:lattice} for denoting the vertices of the $d$-dimensional lattice.

\begin{lemma}[Correlation Decay ]
\label{lemma:correlation_decay}
	Suppose $X\sim \P_{\beta,\bQ,\mathbf{0}}$ with $\beta>0$ and $\bQ_{ij}=\I(0<\|i-j\|_1\leq L)$ for some $L\geq 1$ and $i,j\in \Lambda_n(d)$ (see Section \ref{section:lattice} for precise definitions). Then there exists a $\beta_c(d,L)>0$ such that for all $0\leq \beta<\beta_c(d,L)$ one has
	\begin{align}
	\mathrm{Cov}_{\beta,\bQ,\mathbf{0}}\left(X_i,X_j\right)&\leq \exp\left(-c(\beta,d,L)\|i-j\|_1\right),\label{eqn:correlation_decay}
	\end{align}
	for some $c(\beta,d,L)>0$ depending on $\beta,d,L$.
\end{lemma}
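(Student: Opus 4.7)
The plan is to invoke the Simon--Lieb inequality, the canonical tool for exponential decay of subcritical two-point functions in ferromagnetic Ising models. Since $\bmu=\mathbf{0}$, spin-flip symmetry gives $\E_{\beta,\bQ,\mathbf{0}}(X_i)=0$, so
\[
\mathrm{Cov}_{\beta,\bQ,\mathbf{0}}(X_i,X_j)=\E_{\beta,\bQ,\mathbf{0}}(X_iX_j),
\]
which is nonnegative by Lemma \ref{lemma:GKS}. Thus it suffices to prove exponential decay of this two-point function.

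The Simon--Lieb inequality asserts that for any finite set $B\subset \Lambda_n(d)$ with $i\in B$ and $j\in \Lambda_n(d)\setminus B$,
\[
\E_{\beta,\bQ,\mathbf{0}}(X_iX_j)\;\leq\;\sum_{\substack{y\in B,\,z\notin B\\ 0<\|y-z\|_1\leq L}} \beta\,\langle X_iX_y\rangle_{B}^{\mathrm{free}}\cdot \E_{\beta,\bQ,\mathbf{0}}(X_zX_j),
\]
where $\langle\cdot\rangle_{B}^{\mathrm{free}}$ denotes Ising expectation on $B$ with free boundary conditions and inverse temperature $\beta$. By GKS monotonicity, $\langle X_iX_y\rangle_{B}^{\mathrm{free}}$ is bounded uniformly in $n$ by the corresponding infinite-volume free-boundary quantity on $\mathbb{Z}^d$, so the pre-factor is independent of $n$. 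I then define
\[
\beta_c(d,L):=\sup\left\{\beta>0:\inf_{B\ni 0,\;|B|<\infty}\sum_{\substack{y\in B,\,z\notin B\\ 0<\|y-z\|_1\leq L}}\beta\,\langle X_0X_y\rangle_{B}^{\mathrm{free}}<1\right\},
\]
and note that $\beta_c(d,L)>0$ by a standard high-temperature cluster expansion, which yields $\langle X_0X_y\rangle_{B}^{\mathrm{free}}=O(\beta^{\|y\|_1})$ for $\beta$ small.

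Fixing $0\leq \beta<\beta_c(d,L)$, choose a box $B$ centered at the origin of $\ell_1$-radius $R$ (depending only on $\beta,d,L$) such that $\alpha:=\sum_{(y,z)}\beta\,\langle X_0X_y\rangle_{B}^{\mathrm{free}}<1$. Iterating Simon--Lieb on translates of $B$: each application replaces the right endpoint by a vertex at $\ell_1$-distance at least $R$ further from $i$ while contributing a multiplicative factor $\alpha$. After $k=\lfloor \|i-j\|_1/(R+L)\rfloor$ iterations,
\[
\E_{\beta,\bQ,\mathbf{0}}(X_iX_j)\;\leq\;\alpha^{k}\;\leq\;\exp\!\left(-\frac{|\log\alpha|}{R+L}\|i-j\|_1+O(1)\right),
\]
giving \eqref{eqn:correlation_decay} with $c(\beta,d,L):=|\log\alpha|/(R+L)>0$ after absorbing the $O(1)$ into the constant.

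The main obstacle in this program is establishing the Simon--Lieb inequality itself and the positivity of the threshold $\beta_c(d,L)$; both are classical, the former being a consequence of the FKG/GKS machinery (and/or the random-current or Edwards--Sokal representation) and the latter following from a high-temperature expansion. Rather than reprove these, I would cite the expositions in \cite{duminil2017lectures,friedli2017statistical}; only the elementary iteration step above is specific to our setting.
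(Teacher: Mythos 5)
Your proposal is correct in outline, but it is worth noting that the paper does not actually prove Lemma \ref{lemma:correlation_decay} at all: it is treated as a known fact and delegated entirely to the literature on sharpness of the phase transition and subcritical exponential decay (\cite{aizenman1987phase,duminil2016new,duminil2017lectures,duminil2017sharp}), and indeed Section \ref{sec:comres} only proves Lemmas \ref{lem:slarge}, \ref{lem:smallinsig} and \ref{lem:altbeh}. Your Simon--Lieb route is the standard constructive way to obtain such a bound, and the iteration step you describe is sound: the finite-volume subtleties are handled exactly as you say, since by GKS/Griffiths monotonicity (Lemma \ref{lemma:GKS} and Lemma \ref{lemma:griffith_second}) truncating the box $B$ by $\Lambda_n(d)$ only decreases the two-point functions, so the contraction factor $\alpha<1$ and hence the decay rate are uniform in $n$. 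Two small remarks. First, positivity of your threshold does not require a cluster expansion: taking $B=\{i\}$ in the Simon--Lieb criterion gives the sum $\beta\,\#\{z:0<\|z\|_1\le L\}$, which is $<1$ for $\beta$ small, so $\beta_c(d,L)>0$ is immediate. Second, and more substantively, your $\beta_c(d,L)$ is by construction the Simon--Lieb (finite-box) threshold, whereas in the paper $\beta_c(d,L)$ is used in Theorem \ref{thm:lattice} as the genuine critical temperature of \eqref{eq:critical}, and the surrounding text claims decay ``right up to the critical temperature.'' The literal lemma statement only asserts existence of some positive threshold, which your argument delivers; but to serve Theorem \ref{thm:lattice} as stated one needs the Simon--Lieb-type threshold to coincide with $\beta_c(d,L)$, which is precisely the sharpness theorem of Aizenman--Barsky--Fern\'andez and Duminil-Copin--Tassion that the paper cites. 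So your approach buys an explicit, elementary mechanism for the decay (and an explicit rate $c(\beta,d,L)$), at the cost of either a possibly smaller threshold or an additional appeal to the same sharpness results the paper relies on; the paper's approach buys the full subcritical regime in one citation but with no self-contained argument.
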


\subsection{Proof of Theorem~\ref{thm:dense_regular}}

\begin{enumerate}
\item[(a)]

\begin{itemize}
\item {\bf High Temperature and Critical Point ($0\leq \beta\leq 1$)}

In this case, note that the average degree $\overline{d}=d_n\gg (\log{n})^{\gamma}$ for any $\gamma>0$ and so we have $\PZ (\bX_S={\bf 1})=2^{-|S|}(1+o(1))$ for any $S$ with $|S|\le 2s$, by Lemma~\ref{lem:smallinsig} ((a) and (b)), where the $o(1)$ term depends on $S$ only through its cardinality. Consequently we have \begin{align*}
\sup_{S_1\cap S_2=\phi,|S_1|=|S_2|=s}\Big|\frac{\PZ (\bX_{S_1}=1,\bX_{S_2}=1)}{\PZ (\bX_{S_1}=1)\PZ (\bX_{S_2}=1)}-1\Big|=o(1).
\end{align*}
The desired conclusion then follows by using Theorem~\ref  {thm:lower_short_range_small_s}, part (I).

\item {\bf Low temperature ($\beta>1$)}

In this case we have $$\PZ (\bX_S={\bf 1}|\bar{\bX}\geq 0)=\lambda_{|S|}^{-1} e^{\beta t|S|}(1+o(1))$$ where $\lambda_{|S|}:=\sum_{\mathbf{b}\in \{-1,1\}^s} \exp(\beta t\sum_{i=1}^s b_i)$ for any set $S$ with $|S|\le 2s$, by Lemma~\ref{lem:smallinsig} (c). Consequently we have $$\sup_{S_1\cap S_2=\phi, |S_1|=|S_2|=s}\Big|\frac{\PZ(\bX_{S_1}=1,\bX_{S_2}=1|\bar{\bX}\geq 0)}{\PZ(\bX_{S_1}=1|\bar{\bX}\geq 0)\PZ(\bX_{S_2}=1|\bar{\bX}\geq 0)}-1\Big|=o(1).$$  Next, set $\Omega_n:=\{\bar{\bX}\geq 0\}$ and note that $\liminf\limits_{n\to\infty}\PZ (\Omega_n)\geq 1/2$ by symmetry. The desired conclusion then follows by invoking Theorem~\ref{thm:lower_short_range_small_s}, part (II).
\\
\end{itemize}

\item[(b)]

\begin{itemize}
\item{\bf High temperature ($0\leq \beta<1$)}
Since $\|\bQ\|_{\infty\rightarrow \infty}=1$ for regular graphs, the upper bounds follows from Theorem \ref{thm:upper}. For the lower bound, note that
\begin{align}\label{eq:trnglect}
\max_{i,j} (\bQ^3)_{ij}=\max_{i,j}d_n^{-3}\sum_{k,l} \bQ_{ik}\bQ_{kl}\bQ_{lj}\lesssim \frac{d_n^2}{d_n^3}\lesssim n^{-1}.
\end{align}
Combining the above observation with Lemma~\ref{lem:slarge} (a), we have \[ \mathrm{Cov}_{\beta,\bQ,\mathbf{0}}(X_i,X_j)=\E_{\beta,\bQ,\mathbf{0}}(X_iX_j)\lesssim \frac{\log n}{n},\]
which immediately gives
\begin{align}\label{eq:varconver}
\sup_{S\in \mathcal{C}_n}\mathrm{Var}_{\beta,\bQ,\mathbf{0}}\left(\sum_{i\in S}X_i\right)\le s+ \sum_{i\ne j}\mathrm{Cov}_{\beta,\bQ,\mathbf{0}}(X_i,X_j)\le s+s^2\frac{ \log n}{n}\lesssim s,
\end{align}
where the last line follows from the fact that $s\leq n^{1-\upsilon}$ for some $\upsilon>0$ which is a standing assumption throughout the paper. For the same reason, we also have 
\[\frac{\log n}{s}\sup_{S_1\ne S_2}\sum_{i\in S_1,j\in S_2}\mathrm{Cov}_{\beta,\bQ,\mathbf{0}}(X_i,X_j)\lesssim s^2\frac{\log n}{s}\frac{\log n}{n}=\frac{s(\log n)^2}{n}=o(1).\]
Thus, invoking Theorem \ref{thm:lower_short_range_large_s}, part (I) with $r_n:=s, r_n':=\frac{s}{\log n}$ then shows that testing is impossible if $\tanh(A)\leq c'\min\Big\{\sqrt{\frac{\log n}{s}},\sqrt{\frac{\log n}{s}}\Big\}=c'\sqrt{\frac{\log n}{s}}$, for some small constant $c'>0$, as desired.

\item{\bf Critical Point ($\beta=1$)}
The upper bound for $s(\log{n})/\sqrt{n}\ll 1$ follows from Theorem \ref{thm:upper} as before. For $s\gtrsim \sqrt{n}/\log{n}$, it suffices to show that there is a sequence of asymptotically powerful tests for $sA\gg n^{1/4}$. Note that by Lemma~\ref{lem:altbeh}, there exists a sequence $k_n:=\delta (n^{-1/4}sA)^{1/3}\to\infty$ for some $\delta>0$, such that $\PG (n^{1/4}\bar{\bX}\geq k_n)\to 1$ as $n\to\infty$. Further by~\cite[Theorem 1.3]{Deb2020}, $\PZ (n^{1/4}\bar{\bX}\geq k_n)\to 0$ as $n\to\infty$. Therefore the test which rejects $\mathrm{H}_0$ if $n^{1/4}\bar{\bX}\geq k_n$ is asymptotically powerful.

For the lower bound, as before, using Lemma \ref{lem:slarge} (a) we have
\[\mathrm{Cov}_{\beta,\bQ,\mathbf{0}}(X_i,X_j)=\E_{\beta,\bQ,\mathbf{0}}(X_iX_j)\lesssim \frac{1}{\sqrt{n}}\Rightarrow
\sup_{S\in \mathcal{C}_n} \sum_{i\in S}\mathrm{Var}_{\beta,\bQ,0}(X_i)\le s+\frac{s^2}{\sqrt{n}},\]
The same bound also gives
\[ \sup_{S_1\ne S_2}\sum_{i\in S_1,j\in S_2}\mathrm{Cov}_{\beta,\bQ,0}(X_i,X_j)\le \frac{s^2}{\sqrt{n}}.\]
We will now consider two cases depending on the value of $s$.

\begin{itemize}
\item{$s\ll \frac{\sqrt{n}}{\log n}$.}
In this case we can invoke Theorem \ref{thm:lower_short_range_large_s} with $r_n:=s$ and $r_n':=\frac{s}{\log n}$ as before to get the detection boundary $\tanh(A)\lesssim \sqrt{\frac{\log n}{s}}$.

\item{$s\gtrsim \frac{\sqrt{n}}{\log n }.$}
In this case setting $\epsilon_n:=\frac{sA}{n^{1/4}}$, $r_n:=s+\frac{s^2}{\sqrt{n}}$ and $r_n':=\frac{s^2}{\epsilon_n \sqrt{n}}$, we note that: \begin{equation}\label{eq:whichmin} \min\left(\sqrt{\frac{\log n}{r_n}},\sqrt{\frac{1}{r_n'}}\right)=\frac{\sqrt{\epsilon_n} n^{1/4}}{s},\end{equation}
where the last equality uses the fact that $s\gtrsim \frac{\sqrt{n}}{\log n}$ which in turn implies \[\sqrt{\frac{\log n}{s}}\ge \frac{\sqrt{\epsilon_n} n^{1/4}}{2s}, \quad \sqrt{\frac{\sqrt{n} \log n}{s^2}}\ge \frac{\sqrt{\epsilon_n} n^{1/4}}{2s},\]
for all large enough $n$. Combining~\eqref{eq:whichmin} with Theorem \ref{thm:lower_short_range_large_s}, part (I) shows that testing is impossible if $sA=o(n^{1/4})$.

\end{itemize}

\item{\bf Low temperature ($\beta>1$)}

As in the low temperature regime for part (a), set $\Omega_n:=\{\bar{\bX}\geq 0\}$. It is an increasing set of probability at least $1/2$ under $\PZ$ (by symmetry). Fix $S\subseteq [n]$ and set $\bmu\equiv \bmu_S(\eta)$ for some $\eta\in [0,2A]$. For any $A\leq c'\sqrt{\log{n}/s}$ for some $c'>0$, note that:
\begin{align*}
&\sum_{i=1}^n \mu_i\leq 2 s A  \leq 2c' \sqrt{s\log{n}}\leq 2c'\sqrt{n\log{n}}\\ & \lVert \bQ \bmu\rVert_{\infty}\leq \max_{i\in [n]}\sqrt{\sum_{j=1}^n \bQ_{ij}^2}\sqrt{\sum_{j=1}^n \mu_j^2}\leq 2A\sqrt{\frac{s}{d_n}}\leq 2c'\sqrt{\frac{\log{n}}{d_n}}, 
\end{align*}
which implies that the conditions for Lemma~\ref{lem:mcontrol}, part (b)(i) hold. Also by choosing $c'>0$ small enough, using $s\geq C\log{n}$ and Lemma~\ref  {lem:fixsol}, we have, without loss of generality, $\limsup\limits_{n\to\infty}\max_{i\in [n]} \beta \sech^2(\beta t+\mu_i)<1$. Therefore, by Lemma~\ref{lem:slarge}, part (b)(i)~and~\eqref{eq:trnglect}, we have $\CG (X_i,X_j|\Omega_n)\lesssim \frac{\log n}{n}$ uniformly in $i,j$. Therefore we can choose $r_n=s$ by the same argument as in~\eqref{eq:varconver}. With $\tilde{S}:=S_1\cup S_2$, observe that:
\begin{align*}
    \mbox{Var}_{\beta,\bQ,\bmu_{\tilde{S}}(\eta)}\left(\sum_{i\in S_1\cup S_2} X_i|\Omega_n\right)&=\mbox{Var}_{\beta,\bQ,\bmu_{\tilde{S}}(\eta)}\left(\sum_{i\in S_1} X_i|\Omega_n\right)+\mbox{Var}_{\beta,\bQ,\bmu_{\tilde{S}}(\eta)}\left(\sum_{i\in S_2} X_i|\Omega_n\right)\\&+\mathrm{Cov}_{\beta,\bQ,\bmu_{\tilde{S}}(\eta)}\left(\sum_{i\in S_1} X_i,\sum_{i\in S_2} X_i|\Omega_n\right)\\ &=2s\sech^2(\beta t+\eta)+O(s\sqrt{\log{n}/d_n}+s^2\log{n}/n),
\end{align*}
where the last line follows from Lemma~\ref{lem:slarge}, part (b), (i) and (ii), and the error term is uniform over $S_1,S_2$ and $\eta\in [0,A]$. Similarly,
$$\mbox{Var}_{\beta,\bQ,\bmu_{S_1}(\eta)}\left(\sum_{i\in S_1} X_i|\Omega_n\right)=s\sech^2(\beta t+\eta)+O(s\sqrt{\log{n}/d_n})$$
and the same conclusion holds with $S_1$ replaced by $S_2$ above, with all error terms being uniform in $S_1,S_2,\eta\in [0,A]$. Consequently,
\begin{align}\label{eq:callater}
&\sup_{\eta\in [0,A]}\sup_{S_1\neq S_2\in\mathcal{C}_n'}\bigg|\mathrm{Var}_{\beta,\bQ,\mathbf{\bmu}_{S_1\cup S_2}(\eta)}\left(\sum_{i\in S_1\cup S_2}X_i|\Omega_n\right)-\mathrm{Var}_{\beta,\bQ,\bmu_{S_1}(\eta)}\left(\sum_{i\in S_1}X_i|\Omega_n\right)\nonumber \\ &-\mathrm{Var}_{\beta,\bQ,\bmu_{S_2}(\eta)}\left(\sum_{i\in S_2}X_i|\Omega_n\right)\bigg|\lesssim O(s\sqrt{\log{n}/d_n}+s^2\log{n}/n)=o(s/\log{n}).
\end{align}
Then, by setting $r_n'=\frac{s}{\log{n}}$ and applying Theorem~\ref{thm:lower_short_range_large_s}, part (II) completes the proof.
\end{itemize}
\end{enumerate}

\subsection{Proof of Theorem~\ref{thm:random_regular}}

Observe that $\lVert \bQ\rVert_{\infty\to\infty}=1$ and $\max\{|\lambda_2(\bQ)|,|\lambda_n(\bQ)|\}=O_p(d_n^{-1/2})$ (see~\cite[Theorem A]{friedman1989second}). Further if $d_n=\Theta(n)$, the result follows from Theorem~\ref  {thm:dense_regular}. Therefore we will assume $d_n=o(n)$ in the rest of the proof. The proof of the whole of part (a), and the critical case $\beta=1$ of part (b), then follows similar to the proof of Theorem~\ref{thm:dense_regular}. In fact, the proofs of the upper bounds for the high and low temperature regimes are also the same as in Theorem~\ref  {thm:dense_regular}. We prove the lower bounds for the high and low temperature regimes below.

\begin{itemize}
\item{\bf High temperature} ($0\leq\beta<1$)

To begin note that the conditions for Lemma~\ref{lem:slarge}, part (a) hold if $\theta>1/2$. Using~\eqref{eq:trnglect}, we therefore have the bound 
\begin{align*}
\sup_{i,j\in S}\CZ (X_i,X_j)\lesssim \frac{1}{d_n},
\end{align*}
and so, if $s\ll \frac{d_n}{\log{n}}$, \begin{align}\label{eq:done}\sup_{S\in \mathcal{C}_n}\VZ(\sum_{i\in S}X_i)\lesssim s+s^2\frac{\log n}{d_n}\lesssim s,\quad \frac{\log n}{s}\sup_{S_1\ne S_2}\sum_{i\in S_1,j\in S_2}\mathrm{Cov}_{\beta,\bQ,\mathbf{0}}(X_i,X_j)\lesssim \frac{s\log{n}}{d_n}=o(1).
\end{align}
Thus invoking Theorem 3 with $r_n=s, r_n'=\frac{s}{\log n}$ gives the desired conclusion. Thus without loss of generality we will assume $s\gtrsim \frac{d_n}{\log n}$ throughout the remainder of the proof.

In the remaining part of the proof we will denote the adjacency matrix any graph   $\mathbb{G}_n=(\mathcal{V}_n,\mathcal{E}_n)$ on $n$-vertices as $\mathbf{G}_n$ and its $i,j$-th element as $\mathbf{G}_n(i,j)$. Also conditioning on a random graph $\mathbb{G}_n$ will imply conditioning w.r.t to the sigma field generated by the random variables involved in $\mathbb{G}_n$ (i.e. the random edges in case of a simple random graph with fixed vertex set). 

Now, if $\mathbb{G}_n$ is a random $d_n$-regular graph on vertices $\{1,\ldots,n\}$, then using~\cite[Theorem 1.5 (b)]{gao2020sandwiching} it follows that $\mathbb{G}_n$ is stochastically dominated by an Erdos-Renyi graph $\widetilde{\mathbb{G}}_n$ (whose corresponding adjacency matrix will be denoted by $\widetilde{\mathbf{G}}_n(i,j)$ for its $i,j$-th element) with parameter $p_n:=\kappa \frac{d_n\log{(n/d_n)}}{n}$ for some fixed $\kappa>0$, and so we have
\begin{align*}
\P((\bQ^3)_{ij}\ge 8\kappa^3 \frac{(\log{(n/d_n)})^3}{n})=\P(\mathbf{G}_n(i,j)^3\ge 8\kappa^3 \frac{d_n^3(\log{(n/d_n)})^3}{n})\le \P(\widetilde{\mathbf{G}}_n^3(i,j)\ge 8n^2 p_n^3).
\end{align*}
Let $F_i$ denote the neighbors of $i$  and note that $\widetilde{\mathbf{G}}_n^3(i,j)=\sum_{k\in F_i,\ell \in F_j}\widetilde{\mathbf{G}}_n(k,\ell)$.
Also we have $|F_i|\sim \mathrm{Bin}(n-1,p_n), |F_j|\sim \mathrm{Bin}(n-1,p_n)$, and given the sets $F_i,F_j$ we further have $\sum_{k\in F_i,\ell \in F_j}\widetilde{\mathbf{G}}_n(k,\ell)$ is stochastically dominated by the $\mathrm{Bin}(|F_i| |F_j|, p_n)$ distribution. Using this we have
\begin{align*}
&\P(\widetilde{\mathbf{G}}_n^3(i,j)\ge 8n^2 p_n^3)\\
\le& 2\P(|F_i|>2 np_n)+\E\Big[ \P(\sum_{k\in F_i,\ell \in F_j}\widetilde{\mathbf{G}}_n(k,\ell)>8n^2p_n^3|F_i,F_j) 1\{\max\{|F_i|,|F_j|\le 2np_n\}\Big]\\
\le &2\P(\mathrm{Bin}(n,p_n)>2np_n)+\P(\mathrm{Bin}(4n^2p_n^2,p_n)>8n^2p_n^3)\le e^{-\delta np_n}+e^{-\delta n^2p_n^3}
\end{align*}
for some $\delta>0$, where the last step uses standard Chernoff bounds for a Binomial distribution. A union bound along with the assumption $\theta>1/2$, which translates to $p_n\geq n^{-\alpha}$ for some $\alpha<1/2$ shows that, on setting $D_n:=\Big\{\max_{i,j\in [n]}(\bQ^3)_{ij}\ge  8 \kappa^3 \frac{(\log{ (n/d_n)})^3}{n}\big\}$ we have
\[\P(D_n)\le n^2 \Big(e^{-\delta np_n}+e^{-\delta n^2p_n^3}\Big)=o(1).\] Also, if $\mathbb{G}_n\in D_n^c$, then for any $i,j\in [n]$ we have the following bound from Lemma~\ref{lem:slarge}, part (a):
\begin{align}\label{eq:bound_1}
\sup_{(i,j)\in  \mathcal{E}_n}\mathrm{Cov}_{\beta,\bQ,\mathbf{0}}(X_i,X_j|\mathbb{G}_n)\lesssim \frac{1}{d_n},\quad
\sup_{(i,j)\notin  \mathcal{E}_n}\mathrm{Cov}_{\beta,\bQ,\mathbf{0}}(X_i,X_j|\mathbb{G}_n)\lesssim \frac{(\log n)^3}{n}.
\end{align}
Let $\mathcal{L}_{n,2s}$ denote the collection of all subsets of $[n]$ of size $2s$. Then for any sets $S\in \mathcal{L}_{n,2s}$ let $E(S,\mathbb{G}_n)$ denote the number of  edges in $\mathbb{G}_n$ within the vertices in $S$. Then we have
\begin{align*}
\P(E(S,\mathbb{G}_n)> 4s^2 p_n)\le &\P(E(S,\widetilde{\mathbb{G}}_n)>4s^2p_n)=\P\left(\mathrm{Bin}({2s\choose 2},p_n)>4s^2p_n\right)\le e^{-\delta s^2p_n}.
\end{align*}
Setting $E_n:=\Big\{\sup_{S\in \mathcal{L}_{n,2s}}E(S,\mathbb{G}_n)> 4s^2p_n\}$, a union bound then gives
\[\P(E_n)\le {n\choose 2s} e^{-\delta s^2p_n}\le n^{2s} e^{-\delta s^2 p_n}=o(1),\]
where we use the bound \[s^2p_n\ge s\cdot \frac{d_n}{\log n}\cdot \frac{\kappa d_n\log{ (n/d_n)}}{n}=s\cdot \frac{d_n^2}{n\log n}\gg 2s\log{n}.\]
Combining we have $\P(D_n\cup E_n)=o(1)$. For $\mathbb{G}_n\in D_n^c\cap E_n^c$, using the bound \eqref{eq:bound_1} gives
\begin{align*}
\sup_{S\in \mathcal{C}_n}\VZ \left(\sum_{i\in S}X_i|\mathbb{G}_n\right)\lesssim  s+\frac{s^2 p_n}{d_n}+s^2 \frac{(\log n)^3}{n}\lesssim s,\\
\frac{\log n}{s}\sup_{S_1\ne S_2}\sum_{i\in S_1,j\in S_2}\CZ (X_i,X_j|\textcolor{black}{\mathbb{G}_n})\lesssim \frac{\log n}{s}\Big(\frac{s^2 p_n}{d_n}+s^2 \frac{(\log n)^3}{n}\Big)=o(1).
\end{align*}
Thus again we have verified \eqref{eq:done}, and so invoking Theorem 3 with $r_n=s, r_n'=\frac{s}{\log n}$ gives the desired conclusion as before using Theorem~\ref  {thm:lower_short_range_large_s}, part (I).

\item{\bf Low temperature}

Set $\Omega_n:=\{\overline{\bX}\geq 0\}$. The proof is similar to Theorem~\ref{thm:dense_regular}, part (b) for the low temperature regime. Without loss of generality, we assume $d_n=o(n)$ as before. Note that, on the set $D_n^c$ defined above we have by the same calculation as in the high temperature regime:
\begin{align}\label{eq:ltembbd}
\sup_{(i,j)\in \mathcal{E}_n} \CG (X_i,X_j|\bar{\bX}\geq 0,\mathbb{G}_n)\lesssim \frac{1}{d_n},\sup_{(i,j)\notin \mathcal{E}_n} \CG (X_i,X_j|\bar{\bX}\geq 0,\mathbb{G}_n)\lesssim \frac{(\log n)^3}{n},
\end{align}
by Lemma~\ref{lem:slarge}, part (b) where we have used the fact that $\theta>2/3$. Using~\eqref{eq:ltembbd} and $s\ll \frac{d_n}{\log{n}}$, we get:
\begin{equation}\label{eq:varbdhere}
\sup_{\eta\in [0,2A]}\sup_{S\in\mathcal{C}_n}\mbox{Var}_{\beta,\bQ,\bmu_S(\eta)} \left(\sum_{i\in S} X_i|\bar{\bX}\geq 0, \mathbb{G}_n\right)\lesssim s+\frac{s^2\log{n}}{d_n}\lesssim s.
\end{equation}
Also by the same calculation as in~\eqref{eq:callater}, we see that 
\begin{align}\label{eq:covbdhere}
&\sup_{\eta\in [0,A]}\sup_{S_1\neq S_2\in\mathcal{C}_n'}\bigg|\mathrm{Var}_{\beta,\bQ,\mathbf{\bmu}_{S_1\cup S_2}(\eta)}\left(\sum_{i\in S_1\cup S_2}X_i|\Omega_n, \mathbb{G}_n\right)-\mathrm{Var}_{\beta,\bQ,\bmu_{S_1}(\eta)}\left(\sum_{i\in S_1}X_i|\Omega_n, \mathbb{G}_n\right)\nonumber \\ &-\mathrm{Var}_{\beta,\bQ,\bmu_{S_2}(\eta)}\left(\sum_{i\in S_2}X_i|\Omega_n, \mathbb{G}_n\right)\bigg|\lesssim s\sqrt{\frac{\log{n}}{d_n}}+\frac{s^2}{d_n}=o_{p}\bigg(\frac{s}{\log{n}}\bigg).
\end{align}
Therefore by choosing $r_n=s$, $r_n'=s/\log{n}$ and using Theorem~\ref{thm:lower_short_range_large_s}, part (II) then completes the proof. We can therefore assume $s\gtrsim \frac{\log{n}}{d_n}$. In this case, the same conclusions as in~\eqref{eq:varbdhere}~and~\eqref{eq:covbdhere} follow as in the proof of the high temperature regime if we further restrict to $G_n\in D_n^c\cap E_n^c$. We omit the details for brevity.
%
\end{itemize}

\subsection{Proof of Theorem~\ref{thm:erdos_renyi}}
The proof goes through exactly as the proof of Theorem~\ref  {thm:random_regular}, with the only change being that now $\mathbb{G}_n$ is not regular but ``approximately regular". In order to carry out the same proof, we need to show that $\mathbb{G}_n$ (having adjacency matrix $\mathbf{G}_n$ and $\bQ=\mathbf{G}_n/\overline{d}$) with and $\theta>0$, satisfies:
\begin{enumerate}
    \item[(1).] $\max_{i\in [n]}\Big|\frac{d_i}{\overline{d}}-1\Big|=O_p\Big(\sqrt{\frac{\log n}{\overline{d}}}\Big)$
    \item[(2).] $\lVert \bQ\rVert_{\infty\to\infty}=O_p(1)$.
    \item[(3).] $\max\{|\lambda_2(\bQ)|,|\lambda_n(\bQ)|\}=o_p(1)$.
\end{enumerate}

Here we as usual have defined $d_i=\sum_{j=1}^n \mathbf{G}_{n}(i,j)$ to be the degree of the $i^{\mathrm{th}}$ vertex and $\overline{d}$ the average degree of the graph. As $d_1\sim \mathrm{Bin}(n-1,p_n)$, a standard Chernoff's inequality yields that $\sqrt{np_n}\Big|\frac{d_1}{np_n}-1\Big|=O_p(1)$ and a union bound then yields $\sqrt{\frac{np_n}{\log{n}}}\max_{i\in [n]} \Big|\frac{d_i}{np_n}-1\Big|=O_p(1)$. A similar argument also shows that $\sqrt{np_n}\Big|\frac{\bd}{np_n}-1\Big|=O_p(1)$. Combining these observations yields:
\begin{align*}
    \sqrt{\frac{\overline{d}}{\log{n}}}\max_{i\in [n]} \bigg|\frac{d_i}{\overline{d}}-1\bigg|\lesssim \frac{np_n}{\sqrt{\overline{d}\log{n}}}\left(\max_{i\in [n]}\bigg|\frac{d_i}{np_n}-1\bigg|+\bigg|\frac{\overline{d}}{np_n}-1\bigg|\right)=O_p(1),
\end{align*}
which establishes (1). Note that (2) follows from (1), and (3) follows from~\cite[Theorem 1.1]{Feige2005}.

\subsection{Proof of Theorem \ref{thm:lattice}}
In this proof we follow the notation introduced in Section \ref{section:lattice}.

{{\bf (a) $s\ge C\log n$}}

For this part, it suffices verify the conditions of Theorem \ref{thm:lower_short_range_large_s}  for some large constant $C>0$. To this end, 
we first define a sub-collection $\mathcal{C}_n'$ of $\mathcal{C}_n$ as follows.  Throughout we assume that $s^{1/d}$ and $n^{1/d}$ are integers for the sake of notational convenience. The analyses works verbatim otherwise by working with the corresponding ceiling functions. Also assume without loss of generality that $3s^{1/d}$ divides $n^{1/d}$. First, let $\mathcal{C}_n''$ be the class of disjoint sub-cubes of $\Lambda_n(d)$ obtained by translating along each axis (by $3s^{1/d}$ in each direction each time) the cube of side lengths $3s^{1/d}$ from the bottom left corner of $\Lambda_n(d)=[-n^{1/d},n^{1/d}]^d\cap \mathbb{Z}^d$. Consequently, subdivide each cube in $\mathcal{C}_n'$ into $3^d$ cubes of side length $s^{1/d}$ each {\color{black} and take the} center sub-cube of each cube in $\mathcal{C}_n''$ to be elements of our class $\mathcal{C}_n'$. It is easy to see that $|\mathcal{C}_n'|=|\mathcal{C}_n''|=(2/3)^d\frac{n}{s}$ and also that $\min\limits_{i\in S_1,j\in S_2:\atop S_1\neq S_2\in \mathcal{C}_n}\|i-j\|_1\geq 4s^{1/d}.$

First note that $\|\bQ\|_{\infty\rightarrow \infty}=2dL$. Also note that by Lemma \ref{lemma:correlation_decay}, for all $0\leq \beta<\beta_c(d,L)$ one has
\begin{align}
\mathrm{Cov}_{\beta,\bQ,\mathbf{0}}\left(X_i,X_j\right)&\leq \exp\left(-c(\beta,d,L)\|i-j\|_1\right),
\end{align}
for some $c(\beta,d,L)>0$ depending on $\beta,d,L$. Therefore for any $S\in \mathcal{C}_n'$ we have
\begin{align*}
\mathrm{Var}_{\beta,\bQ,\mathbf{0}}(\sum_{i\in S}X_i)&\leq \sum_{i,j\in S}\exp\left(-c(\beta,d,L)\|i-j\|_1\right)\\
&\leq \sum_{l=0}^{ds^{1/d}}\sum_{i,j\in S:\|i-j\|_1= l} \exp\left(-c(\beta,d,L)\|i-j\|_1\right)\leq C's,
\end{align*}
for some constant $C'>0$.
Also for any $S_1\neq S_2\in \mathcal{C'}$
\begin{align*}
\sum_{i\in S_1, j\in S_2}\mathrm{Cov}_{\beta,\bQ,\mathbf{0}}(X_i,X_j)&\leq \sum_{i\in S_1, j\in S_2}\exp\left(-c(\beta,d,L)\|i-j\|_1\right)\\
&\leq s^2\exp\left(-4c(\beta,d,L)s^{1/d}\right).
\end{align*}


This completes the verification of the conditions of Theorem \ref{thm:lower_short_range_large_s}, thus verifying part (a).

{{\bf (b) $s\le c\log n$}}

It suffices to verify the conditions of Theorem \ref{thm:lower_short_range_small_s} 
for some small constant $c>0$. To this end, 
we again define a sub-collection of $\mathcal{C}_n$ as follows.  First, let $\mathcal{C}_n'$ be the class of disjoint sub-cubes of $\Lambda_n(d)$ obtained by translating along each axis (by $\log{n}$ in each direction each time) the cube of side lengths $s^{1/d}$ from the bottom left corner of $\Lambda_n(d)=[-n^{1/d},n^{1/d}]^d\cap \mathbb{Z}^d$.  It is easy to see that $|\mathcal{C}_n'|\gtrsim\left(\frac{n}{s+\log^d n}\right)\gtrsim \frac{n}{\log^d n}$, and also  $\min\limits_{i\in S_1,j\in S_2:\atop S_1\neq S_2\in \mathcal{C}_n'}\|i-j\|_1\geq \log{n}.$ Consider now any two $S_1\neq S_2\in \mathcal{C}_n'$ and consider the ratio 
\begin{align*}
  \frac{\P_{\beta,\bQ,\mathbf{0}}(\bX_{S_1}=1,\bX_{S_2}=1)}{\P_{\beta,\bQ,\mathbf{0}}(\bX_{S_1}=1)\P_{\beta,\bQ,\mathbf{0}}(\bX_{S_2}=1)}.  
\end{align*}
To analyze this ratio, let $\tilde{\P}$ denote the Edward-Sokal coupling measure between the Ising model $\P_{\beta,\bQ,\mathbf{0}}$ and the corresponding random cluster model (see e.g. \cite{grimmett2006random,duminil2017lectures}). In the following argument, for any two sets $A,B\in \Lambda_n(d)$ we denote $A\leftrightarrow B$ (respectively $A\not \leftrightarrow B$) to denote the event that there is an open path between the sets $A$ and $B$ (respectively there is not open path between $A$ and $B$). Then
\begin{align*}
    \P_{\beta,\bQ,\mathbf{0}}(\bX_{S_1}=1,\bX_{S_2}=1)&=\tilde{\P}(\bX_{S_1}=1,\bX_{S_2}=1)\\
    &=\tilde{\P}(\bX_{S_1}=1,\bX_{S_2}=1,S_1\leftrightarrow S_2)+\tilde{\P}(\bX_{S_1}=1,\bX_{S_2}=1,S_1\not \leftrightarrow S_2)
\end{align*}
Now, since $0\leq \beta<\beta_c(d)$ and $\min\limits_{i\in S_1,j\in S_2:\atop S_1\neq S_2\in \mathcal{C}_n'}\|i-j\|_1\geq \log{n}$, there exists a constant $\rho>0$ such that
\begin{align}
    \tilde{\P}(\bX_{S_1}=1,\bX_{S_2}=1,S_1 \leftrightarrow S_2)\leq \tilde{\P}(S_1 \leftrightarrow S_2) \leq \exp(-\rho\log{n}).\label{eqn:random_cluster_exp_decay}
\end{align}
Similarly, since under the Edward-Sokal coupling disjoint clusters are assigned spins independent of one another, we have
\begin{align*}
    \tilde{\P}(\bX_{S_1}=1,\bX_{S_2}=1,S_1\not\leftrightarrow S_2)
    &=\tilde{\P}(\bX_{S_1}=1,\bX_{S_2}=1|S_1\not \leftrightarrow S_2)\tilde{P}(S_1\not\leftrightarrow S_2)\\
    &=\tilde{\P}(\bX_{S_1}=1|S_1\not \leftrightarrow S_2)\tilde{\P}(\bX_{S_2}=1|S_1\not \leftrightarrow S_2)\tilde{\P}(S_1\not \leftrightarrow S_2)\\
\end{align*}
Moreover, by FKG Inequality we have
\begin{align}
{\P_{\beta,\bQ,\mathbf{0}}}(\bX_{S_1}=1){\P_{\beta,\bQ,\mathbf{0}}}(\bX_{S_2}=1)\geq 2^{-2s}.\label{eqn:prob_larger_than_indep}
\end{align}
Therefore, 
\begin{align*}
    \frac{\P_{\beta,\bQ,\mathbf{0}}(\bX_{S_1}=1,\bX_{S_2}=1)}{\P_{\beta,\bQ,\mathbf{0}}(\bX_{S_1}=1)\P_{\beta,\bQ,\mathbf{0}}(\bX_{S_2}=1)}
    &=\frac{\tilde{\P}(\bX_{S_1}=1|S_1\not \leftrightarrow S_2)\tilde{\P}(\bX_{S_2}=1|S_1\not \leftrightarrow S_2)\tilde{\P}(S_1\not \leftrightarrow S_2)}{\P_{\beta,\bQ,\mathbf{0}}(\bX_{S_1}=1)\P_{\beta,\bQ,\mathbf{0}}(\bX_{S_2}=1)}\\
    &+\frac{\tilde{\P}(\bX_{S_1}=1,\bX_{S_2}=1,S_1 \leftrightarrow S_2)}{\P_{\beta,\bQ,\mathbf{0}}(\bX_{S_1}=1)\P_{\beta,\bQ,\mathbf{0}}(\bX_{S_2}=1)}. \label{eqn:ratio_lessthan_logn}
\end{align*}
The second term in the display above is smaller than $\exp(-\rho\log{n}+s\log 4)$ by \eqref{eqn:random_cluster_exp_decay} and  \eqref{eqn:prob_larger_than_indep}. Therefore, for $s\le c\log n$ with $c$ small enough, this converges to $0$, and we only need to show that the first term above is $1+o(1)$ uniformly in $S_1\neq S_2$. To this end, note that
\begin{align*}
  \frac{\tilde{\P}(\bX_{S_1}=1|S_1\not \leftrightarrow S_2)\tilde{\P}(\bX_{S_2}=1|S_1\not \leftrightarrow S_2)\tilde{\P}(S_1\not \leftrightarrow S_2)}{\P_{\beta,\bQ,\mathbf{0}}(\bX_{S_1}=1)\P_{\beta,\bQ,\mathbf{0}}(\bX_{S_2}=1)}&=T_1+T_2+T_3,  
\end{align*}
where 
\begin{align*}
    T_1&=\frac{\tilde{\P}(\bX_{S_1}=1)\tilde{\P}(\bX_{S_2}=1)}{\tilde{\P}(S_1\not \leftrightarrow S_2)\P_{\beta,\bQ,\mathbf{0}}(\bX_{S_1}=1)\P_{\beta,\bQ,\mathbf{0}}(\bX_{S_2}=1)}
\end{align*}
\begin{align*}
    T_2&=-\frac{\tilde{\P}(\bX_{S_1}=1,S_1\leftrightarrow S_2)\tilde{\P}(X_{S_2}=1)+\tilde{\P}(\bX_{S_2}=1,S_1\leftrightarrow S_2)\tilde{\P}(X_{S_1}=1)}{\tilde{\P}(S_1\not \leftrightarrow S_2)\P_{\beta,\bQ,\mathbf{0}}(\bX_{S_1}=1)\P_{\beta,\bQ,\mathbf{0}}(\bX_{S_2}=1)}
\end{align*}

\begin{align*}
T_3&=\frac{\tilde{\P}(\bX_{S_1}=1,S_1\leftrightarrow S_2)\tilde{\P}(\bX_{S_2}=1,S_1\leftrightarrow S_2)}{\tilde{\P}(S_1\not \leftrightarrow S_2)\P_{\beta,\bQ,\mathbf{0}}(\bX_{S_1}=1)\P_{\beta,\bQ,\mathbf{0}}(\bX_{S_2}=1)}    
\end{align*}
Once again, it is easy to see from \eqref{eqn:random_cluster_exp_decay} and  \eqref{eqn:prob_larger_than_indep} that there exists a constant $\tilde{\rho}$ such that for large enough $n$ one has $|T_2+T_3|\lesssim  C_1\exp(-\tilde{\rho}\log{n}+s\log{4})$. For $T_1$, note that by definition of the coupling we have
\begin{align*}
T_1&=\frac{{\P_{\beta,\bQ,\mathbf{0}}}(\bX_{S_1}=1)\P_{\beta,\bQ,\mathbf{0}}(\bX_{S_2}=1)}{\tilde{\P}(S_1\not \leftrightarrow S_2)\P_{\beta,\bQ,\mathbf{0}}(\bX_{S_1}=1)\P_{\beta,\bQ,\mathbf{0}}(\bX_{S_2}=1)}\\
&=\frac{1}{\tilde{\P}(S_1\not \leftrightarrow S_2)}.
\end{align*}
From \eqref{eqn:random_cluster_exp_decay} we immediately have $T_1=1+o(1)$ uniformly in $S_1\neq S_2\in \mathcal{C}_n'$. This completes the verification of the condition of Theorem \ref{thm:lower_short_range_small_s} for $0\leq \beta<\beta_c(d)$.

\section{Proofs of Auxiliary Lemmas}\label{sec:comres}



This section is devoted to proving Lemmas~\ref{lem:slarge},~\ref{lem:smallinsig},~and~\ref{lem:altbeh}.


In the sequel, we will use $d_{\max}$ to represent the maximum degree of the graph  $\mathbb{G}_n$ and $\mathbf{I}_m$ for the $m\times m$ identity matrix. Finally, throughout we let $\alpha_n=\sqrt{\frac{\log{n}}{\overline{d}}}$, for $\bX\sim \P_{\beta,\bQ,\bmu}$ let $m_i=m_i(\bX):=\sum_{j=1}^n \bQ_{ij}X_j$, $\bar{\mathbf{m}}=\sum_{i=1}^n m_i/n$ , and use the letter $t$ to denote the non-negative root of $x=\tanh(\beta x)$ for $\beta>1$.

Our first result yields a sharp control on the tail behavior of $m_i,i\geq 1$ -- which serves as a crucial building block for proving Lemmas~\ref{lem:slarge},~\ref{lem:smallinsig},~and~\ref{lem:altbeh}.

\begin{lemma}\label{lem:mcontrol}
	Suppose that $\bQ$ is the scaled adjacency matrix of a graph, such that $\max_{1\le i\le n}\Big|\frac{d_i}{\overline{d}}-1\Big|\to  0$, and let $\lambda\ge 1$. Then we have the following conclusions:
	\begin{enumerate}
	\item[(a)] For $0\leq \beta<1$ we have:
	\begin{align*}
	\log{\PZ\Big(\max_{i\in [n]}|m_i|>\lambda\alpha_n\Big)}\lesssim -\lambda^2,
	\end{align*}
	for all large enough $n$.

	\item[(b)] Let $\beta>1$, and $\max_{i\in [n]}\Big|\frac{d_i}{\overline{d}}-1\Big|\lesssim \alpha_n.$ 
	
	(i) If $\mathbb{G}_n$ satisfies  $\bd\gg \sqrt{n \log n}$ and
	
	\begin{equation}\label{eq:connected}
\limsup\limits_{n\to\infty}\lambda_2(\bQ)<1,
\end{equation} 
	and $\bmu$ satisfies  
	\begin{align}\label{eq:mu_cond}
	\sum_{i=1}^n\mu_i\lesssim\sqrt{n\log n},\quad \lVert \bQ\bmu\rVert_\infty\lesssim \alpha_n,
	\end{align}
	 then we have
	\begin{align*}
	\log{\PG\left(\max_{i\in [n]} |m_i-t|\geq \lambda \alpha_n,\bar{\bX}\geq 0\right)}\lesssim -\lambda^2,
	\end{align*}
	for all large enough $n$.
	
	
(ii) 
	If  $\mathbb{G}_n$ satisfies $\overline{d}\ge n^\gamma$ for some $\gamma>0$, and $\max_{2\le i\le n}|\lambda_i(\bQ)| \to 0$, and $\boldsymbol{\mu}$ satisfies \eqref{eq:mu_cond}, then we  have:
	\begin{align*}
	\log{\PZ\left(\max_{i\in [n]} |m_i-t|\geq \lambda \alpha_n,\bar{\bX}\geq 0\right)}\lesssim -\min{(\lambda^2,\bd^{1-\gamma})},
	\end{align*}
	for all large enough $n$.

	\item[(c)] For $\beta=1$, assume that $\max_{i\in [n]}\Big|\frac{d_i}{\overline{d}}-1\Big|\lesssim \alpha_n$.
	
(i) Then we have
	\begin{align*}
	\log\PZ \left(\max_{i\in [n]} |m_i|>\lambda \alpha_n^{1/3}\right)\lesssim -\lambda^2
	\end{align*}
	for all large enough $n$.
	
	(ii) If $\mathbb{G}_n$ satisfies \eqref{eq:connected}, and $\bmu\in (\mathbb{R}^+)$ satisfies  $\lVert \bQ\bmu\rVert_\infty \lesssim \alpha_n$,  then we have:
	\begin{align*}
	\log{\PG\left(\max_{i\in [n]} |m_i-\bar{\mm}|>\lambda \frac{(\log n)^{3/2}}{\overline{d}}+\lambda \sqrt{\bar{\bmu}}\right)}\lesssim -\lambda^2,
	\end{align*}
	for all large enough $n$.

	\end{enumerate}
\end{lemma}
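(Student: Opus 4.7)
\textbf{Proof proposal for Lemma~\ref{lem:mcontrol}.}

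The overarching strategy is to establish, for each $i \in [n]$, Gaussian-type concentration of $m_i(\bX) = \sum_j \bQ_{ij} X_j$ around an appropriate center, and then take a union bound over the $n$ coordinates. Observe that $\bX \mapsto m_i(\bX)$ has Lipschitz constant $2|\bQ_{ik}|$ in the $k$-th coordinate, and under the assumption $\max_i |d_i/\bd - 1| \to 0$, we have $\sum_k \bQ_{ik}^2 \lesssim 1/\bd$ uniformly in $i$. Thus any Chatterjee-type Dobrushin concentration inequality will give Gaussian tails at scale $\alpha_n = \sqrt{\log n/\bd}$ after a union bound, provided the Dobrushin coefficient is bounded away from $1$.

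For part (a), the condition $\beta < 1$ (together with the row sums of $\bQ$ being approximately $1$) puts us in the high-temperature Dobrushin regime with $\beta\|\bQ\|_{\infty\to\infty} < 1$. Invoking Lemma~\ref{lemma:chatterjee_high_temp} applied to the linear functional $\sqrt{\bd}\,m_i$ gives $\PZ(|m_i - \EZ m_i| > \lambda\alpha_n) \le 2\exp(-c\lambda^2 \log n)$ for a constant $c > 0$; symmetry of the $\bmu = \mathbf{0}$ model forces $\EZ m_i = 0$, and a union bound over $i \in [n]$ completes the argument since $(c\lambda^2 - 1)\log n \gtrsim \lambda^2$ for $\lambda \ge 1$ and $n$ large.

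Part (b) is the main obstacle: since the unconditional measure has two phases, $\EG m_i$ is not close to $t$ and we must work under the event $\{\bar{\bX} \ge 0\}$. I plan to proceed in three steps. First, show that $\bar{\bX}$ concentrates at $+t$ conditionally on $\{\bar{\bX} \ge 0\}$. The assumption~\eqref{eq:connected} together with~\eqref{eq:mu_cond} and $\bd \gg \sqrt{n\log n}$ allows a mean-field expansion of the conditional log-partition function (in the spirit of~\cite{basak2017universality}) whose effective free energy has a unique quadratic minimum at $+t$ inside $\{\bar{\bX} \ge 0\}$, yielding $\PG(|\bar{\bX} - t| > \lambda/\sqrt{n}\mid \bar{\bX} \ge 0) \lesssim e^{-c\lambda^2}$. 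Second, decompose
\[m_i - t \;=\; (m_i - \bar{\mm}) \;+\; (\bar{\mm} - \bar{\bX}) \;+\; (\bar{\bX} - t),\]
where the last term is handled by Step 1, the middle term equals $\bar{\bX}(1 - n^{-1}\sum_j\sum_k \bQ_{jk}/n)$ up to $O(\alpha_n)$ corrections using $\max_i|d_i/\bd - 1| \lesssim \alpha_n$, and the first term $m_i - \bar{\mm} = [(\bQ - n^{-1}\mathbf{1}\mathbf{1}^\top)\bX]_i + O(\alpha_n)$ is controlled by Lipschitz concentration applied to the operator $\bQ - n^{-1}\mathbf{1}\mathbf{1}^\top$, whose operator norm equals $\limsup \lambda_2(\bQ) < 1$. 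The required Dobrushin bound on the conditional measure must be justified separately, e.g.\ by comparing to the unconditional Glauber dynamics restricted to the positive phase; this is where the assumption $\max_i \beta\sech^2(\beta t+\mu_i) < 1$ enters to ensure contraction of the cavity map linearized around $t$. For part (b)(ii), the strengthened assumption $\max_{i\ge 2}|\lambda_i(\bQ)| \to 0$ together with $\bd \ge n^\gamma$ allows iterating the fixed-point equation $m_i \approx \tanh(\beta m_i + \mu_i)$ a logarithmic number of times, improving the tail to the advertised $\exp(-\bd^{1-\gamma})$ regime.

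For part (c), the Dobrushin condition fails and one cannot apply Lemma~\ref{lemma:chatterjee_high_temp} directly. Part (c)(i) should follow from the critical-scale bounds in~\cite{Deb2020}: since $\bar{\bX}$ concentrates at scale $n^{-1/4}$ rather than $n^{-1/2}$, propagating this through $m_i = \bar{\bX} + (m_i - \bar{\bX})$ and using that the "non-null" fluctuations $m_i - \bar{\bX}$ are still controlled by the spectral gap of $\bQ$ yields the $\alpha_n^{1/3}$ scaling. For part (c)(ii), I would again decompose $m_i - \bar{\mm}$ using the projection onto the orthogonal complement of $\mathbf{1}$: the operator $\bQ - n^{-1}\mathbf{1}\mathbf{1}^\top$ has spectral norm strictly less than $1$ by~\eqref{eq:connected}, so it induces a valid Dobrushin-type concentration on the orthogonal subspace even at $\beta = 1$; this gives Gaussian tails at scale $1/\bd$ for $m_i - \bar{\mm}$, inflated to $(\log n)^{3/2}/\bd$ after union bound and the extra logarithmic factors needed to dominate the cubic rather than quadratic free-energy well. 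The $\sqrt{\bar{\bmu}}$ term absorbs the drift introduced by $\bQ\bmu$.

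The genuinely hard step throughout is Step~1 of part (b): quantifying concentration of $\bar{\bX}$ around $+t$ under $\P(\cdot\mid\bar{\bX}\ge 0)$ with the correct dependence on $n$ and $\bd$. This requires sharp control on the ratio of partition functions in the two phases, which in turn relies on the spectral gap condition~\eqref{eq:connected}. Parts (a) and (c)(i) are comparatively routine applications of general concentration machinery once the correct Lipschitz constants are identified.
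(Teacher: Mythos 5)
Your plan for parts (b) and (c)(ii) rests on inputs that you do not establish and that are unlikely to be available as stated; these are not side conditions but the actual content of the lemma. Concretely, you require (i) a Dobrushin-type contraction for the measure conditioned on the global event $\{\bar{\bX}\ge 0\}$ when $\beta>1$, to be obtained "by comparing to the unconditional Glauber dynamics restricted to the positive phase," and (ii) a "Dobrushin-type concentration on the orthogonal complement of $\mathbf{1}$" at $\beta=1$ giving Gaussian tails at scale $1/\bd$ for coordinates of $(\bQ-n^{-1}\mathbf{1}\mathbf{1}^\top)\bX$. Dobrushin's condition genuinely fails for $\beta\ge 1$, conditioning on $\{\bar{\bX}\ge 0\}$ destroys the Gibbs/Markov structure near the boundary of that event, and making "restricted Glauber" rigorous requires metastability/bottleneck estimates that are at least as hard as the lemma itself; there is no off-the-shelf concentration result of the kind you invoke for linear functionals orthogonal to $\mathbf{1}$ at criticality. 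You acknowledge these as steps "to be justified separately," together with Step 1 (conditional concentration of $\bar{\bX}$ at $t$), so the proposal leaves the genuinely hard part unproved.

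For comparison, the paper's proof needs no Dobrushin hypothesis and no separate concentration of $\bar{\bX}$ at $t$: it applies the all-temperature exchangeable-pair bound of Lemma \ref{lemma:chatterjee} to the \emph{conditionally centered} field $m_i-\sum_j\bQ_{ij}\tanh(\beta m_j+\mu_j)$, Taylor-expands $\tanh$ around the fixed point $t$ and uses its stability $\beta(1-t^2)<1$ (Lemma \ref{lem:fixsol}) to obtain the self-consistent bound $\max_i|m_i-t|\lesssim \lambda\alpha_n+\max_i\sum_j\bQ_{ij}(m_j-t)^2$; the conditioning on $\{\bar{\bX}\ge 0\}$ and the assumptions \eqref{eq:connected}, \eqref{eq:mu_cond}, $\bd\gg\sqrt{n\log n}$ enter only through a conditional exponential-moment bound on $\sum_i(m_i-t)^2$ imported from \cite{Deb2020}; parts (b)(ii) and (c)(ii) then iterate the linearized recursion with the matrix $\beta\big[\bQ_{ij}\sech^2(\beta\xi_j)-(1-t^2)/n\big]$ (respectively $\tilde\bQ^{\ell}$) an $\ell\sim\log n$ number of times, using the eigenvalue assumptions to make its norm small. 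Two further mismatches with the hypotheses: your route to (c)(i) uses a spectral gap ($\limsup\lambda_2(\bQ)<1$) that part (c)(i) does not assume (the paper simply cites \cite{Deb2020} there), and the contraction relevant in (b) is $\beta\sech^2(\beta t)<1$, automatic from Lemma \ref{lem:fixsol}, not the condition $\max_i\beta\sech^2(\beta t+\mu_i)<1$, which belongs to Lemma \ref{lem:slarge}. Part (a) of your argument is essentially fine, modulo extending Lemma \ref{lemma:chatterjee_high_temp} from equal-weight sums to the weighted functionals $m_i$ and handling constants for $\lambda$ of order one, and matches the paper's citation of \cite{Deb2020}.
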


\begin{proof}
	Part (a) follows directly from part (a) of~\cite[Lemma 2.3]{Deb2020}. Part (c)(i) follows by combining \cite[Equations (4.9), (4.10)]{Deb2020}. Here we prove the remaining parts (b)(i) (ii), and (c) (ii).


	\emph{Part (b)}(i). Without loss of generality we can assume $\lambda \alpha_n\le 1$, as otherwise the bound is trivial on noting that $\lambda \alpha_n> 1\gtrsim \max_{i\in [n]}|m_i|.$
	 We now claim that
	\begin{align}\label{eq:al}
	\log\PG (A_{n,\lambda})\lesssim -\lambda^2,\quad A_{n,\lambda}:=\left\{\max_{i\in [n]}|m_i-\sum_{j=1}^n\bQ_{ij}\tanh(\beta m_j)|> \lambda \alpha_n\right\}.
	\end{align}
	Indeed, this follows on using Lemma \ref{lemma:chatterjee} along with the bound $\lVert \bQ\bmu \rVert_\infty\lesssim \alpha_n.$ 
	
	A two term Taylor expansion of $\tanh(\beta m_j)$ gives
	\[\Bigg| \sum_{j=1}^n\bQ_{ij}\tanh(\beta m_j)-\frac{d_i}{\overline{d}}t-\beta (1-t^2)\sum_{j=1}^n \bQ_{ij}(m_j-t)\Bigg|\lesssim \sum_{j=1}^n\bQ_{ij}(m_j-t)^2. \]
	and so on the set $A_{n,\lambda}^c$ we have
	\begin{align}\label{eq:parksandrec}
	\notag&\max_{i\in [n]}|m_i-t|\Big[1-\beta(1-t^2)\max_{i\in [n]}d_i/\bd\Big]\lesssim \lambda \alpha_n+\max_{i\in [n]}\left[\sum_{j=1}^n \bQ_{ij}(m_j-t)^2+\Bigg|\frac{d_i}{\overline{d}}-1\Bigg|\right]\\
	\Rightarrow &\max_{i\in [n]}|m_i-t| \lesssim \max_{i\in [n]}\sum_{j=1}^n Q_{ij}(m_j-t)^2+ \lambda \alpha_n,
	\end{align}
	where the last line uses the fact that $\max_{i\in [n]}\frac{d_i}{\overline{d}}\to 1$, and $\beta(1-t^2)<1$. 
	We now claim that for every $\varepsilon>0$ we have
	\begin{align}\label{eq:b}
	\log \PG(B_{n,\varepsilon})\lesssim -\frac{1}{\alpha_n^2},\quad B_{n,\varepsilon}:=\Big\{\max_{i\in [n]}|m_i-t|>\varepsilon\Big\}.
	\end{align}
	Given \eqref{eq:b}, choosing $\varepsilon>0$ small enough, on the set $A_{n,\lambda}^c\cap B_{n,\varepsilon}^c$, using \eqref{eq:parksandrec} we have
	\[ \max_{i\in [n]}|m_i-t|\lesssim \lambda \alpha_n+\max_{i\in [n]}|m_i-t|^2\Rightarrow \max_{i\in [n]}|m_i-t|\lesssim \lambda \alpha_n.\]
	Combining the above observations, we get
\begin{align}\label{eq:combine2}
\log \PG (\max_{i\in [n]}|m_i-t|\gtrsim \lambda \alpha_n)\le \log \Big[\PG (A_{n,\lambda})+\PG (B_{n,\varepsilon})\Big]\lesssim -\min\Big[\lambda^2, \frac{1}{\alpha_n^2}\Big]
\end{align}
 using \eqref{eq:al} and \eqref{eq:b}. The desired conclusion follows from this on recalling that $\lambda \alpha_n\le 1$.

It thus suffices to verify \eqref{eq:b}. To this effect, use \eqref{eq:al} with $\lambda \alpha_n=\varepsilon$ to note that
\begin{align}\label{eq:a_r}
\log \PG (A_{n, \varepsilon/\alpha_n})=\log \PG \left(\max_{i\in [n]}|m_i-\sum_{j=1}^n\bQ_{ij}\tanh(\beta m_j)|>\varepsilon\right)\lesssim -\frac{1}{\alpha_n^2}.
\end{align}
We now claim that for any $0<M<\infty$ and for all $L\ge M\alpha_n^{-2}\log n$ (i.e., $L\geq M\bd$), we have
	\begin{align}\label{eq:c}
	\log \PG (C_{n,L})\lesssim- L,  \text{ where }C_{n,L}:=\Big\{\sum_{i=1}^n(m_i-t)^2>L, \bar{\bX}\geq 0\Big\}.
	\end{align}

	On the set $C_{n,\varepsilon\bd }^c$ we have
	\begin{align*}
	\sum_{j=1}^n\bQ_{ij}(m_j-t)^2\le \frac{1}{\overline{d}}\sum_{j=1}^n(m_j-t)^2\le \varepsilon ,
	\end{align*}
	which along with \eqref{eq:parksandrec} implies that there exists constants $M_1>0$ such that:
	\begin{align*}
	\ & \log \PG (\max_{i\in [n]}|m_i-t|>\varepsilon,\bar{\bX}\geq 0)\\
	\le &\log \PG\left(\max_{i\in [n]}\sum_{j=1}^n \bQ_{ij}(m_j-t)^2\geq  M_1\varepsilon,\bar{\bX}\geq 0\right)\\
	\le& \log \max\left[\PG(A_{n,M_1\varepsilon/\alpha_n}),\PG(C_{n,\varepsilon\overline{d}})\right]\lesssim -\min\left[\frac{1}{\alpha_n^2},\overline{d}\right]=-\alpha_n^{-2},
	\end{align*}
	where the last inequality uses \eqref{eq:a_r} and \eqref{eq:c}. This
	verifies \eqref{eq:b}. Finally,  \eqref{eq:c}
 follows using \eqref{eq:mu_cond} to note that  $\sup_{\mathbf{x}\in [-1,1]^n} |\sum_{i=1}^n \mu_ix_i|\le \sum_{i=1}^n\mu_i\lesssim \sqrt{n\log n},$ and consequently for $\delta$ small enough we have
	\begin{align}\label{eq:changemes}
	\log\EG \left[\exp\left(\delta\sum_{i=1}^n (m_i-t)^2\right)|\bar{\bX}\geq 0\right]\leq& \sqrt{n\log n}+\log \EZ\left[\exp\left(\delta\sum_{i=1}^n (m_i-t)^2\right)|\bar{\bX}\geq 0\right]\nonumber \\
	\lesssim &\sqrt{n\log n}+\sum_{i=1}^n\Big(\frac{d_i}{\overline{d}}-1\Big)^2+\frac{n}{\overline{d}}=o(\overline{d}), 
	\end{align}
where the last line uses~\cite[Lemma 2.2 part (b)]{Deb2020}.

	\emph{Part (b)}(ii) 
	
	 We begin by claiming that for every $\varepsilon>0$ we have
	\begin{align}\label{eq:new_claim}
	\log \PG(\max_{i\in [n]}|m_i-t|>\varepsilon)\lesssim -\overline{d}^{1-\gamma}.
	\end{align}
	Given this claim, note that \eqref{eq:new_claim} is the analogue to \eqref{eq:b} above, which is the only place where we use the fact that $\overline{d}\gg \sqrt{n\log n}$ in the proof of part (c)(i). Thus, following the above proof for the derivation of \eqref{eq:combine2} gives
	\[\log \PG(\max_{i\in [n]}|m_i-t|>\lambda \alpha_n)\lesssim \max\Big[\log \PG(A_{n, \lambda}),\log \PG(B_{n,\varepsilon})\Big]=-\min\Big[\lambda^2, \overline{d}^{1-\gamma}\Big],\]
	as desired.
It thus remains to verify \eqref{eq:new_claim}. 

To this effect, a one term Taylor's series expansion of $\tanh(\beta m_j)$ gives
\[\sum_{j=1}^n\bQ_{ij}\tanh(\beta m_j)=t\frac{d_i}{\overline{d}}+\beta\sum_{j=1}^n\bQ_{ij}(m_j-t)\text{sech}^2(\beta \xi_j)=t\frac{d_i}{\overline{d}}+\beta(1-t^2)\frac{1}{n} \sum_{j=1}^n(m_j-t)+\sum_{j=1}^n\mathbf{D}^{(n)}_{ij}(m_j-t),\]
where $\mathbf{D}^{(n)}_{ij}:=\beta \left[\bQ_{ij}\text{sech}^2(\beta \xi_j)-\frac{1-t^2}{n}\right]$, for some $\xi_j$ lying between $m_j$ and $t$. On the set $A_{n,\sqrt{\overline{d}^{1-\gamma}}}^c\cap C^c_{n,n\log n/\overline{d}^{\gamma}}$ this gives
\begin{align*}|(m_i-t)-\sum_{j=1}^n\mathbf{D}^{(n)}_{ij}(m_j-t)|\lesssim &\Big|\frac{d_i}{\overline{d}}-1\Big|+\Big|\frac{1}{n}\sum_{i=1}^n(m_i-t)\Big|+\sqrt{\frac{\log n}{\overline{d}^{\gamma}}}\\
\le &\Big|\frac{d_i}{\overline{d}}-1\Big|+\sqrt{\frac{1}{n}\sum_{i=1}^n(m_i-t)^2}+  \sqrt{\frac{\log n}{\overline{d}^{\gamma}}}\le 3\sqrt{\frac{\log n}{\overline{d}^{\gamma}}}
\end{align*}
for all $n$ large enough.
With $K$ denoting the implied constant in the display above we have
\begin{align}\label{eq:recur_1}
\max_{i\in [n]}\Bigg|(m_i-t)-\sum_{j=1}^n\mathbf{D}^{(n)}_{ij}(m_j-t)\Bigg|\le K\sqrt{\frac{\log n}{\overline{d}^{\gamma}}}
\end{align}
Consequently, for every $\ell\ge 1$ we have
\begin{align*}
&\Bigg|\sum_{j=1}^n(\mathbf{D}^{(n)}_{ij})^\ell(m_j-t)-\sum_{j=1}^n \mathbf{D}^{(n)}_{ij}\sum_{k=1}^n(\mathbf{D}^{(n)}_{jk})^\ell(m_k-t)\Bigg|\\
\le &\max_{j\in [n]}\Bigg|(m_j-t)-\sum_{k=1}^n\mathbf{D}^{(n)}_{jk}(m_j-t)\Bigg|\max_{i\in [n]}\Bigg|\sum_{j=1}^n(\mathbf{D}^{(n)}_{ij})^\ell\Bigg| 
\le (2\beta)^\ell K\sqrt{\frac{\log n}{\overline{d}^{\gamma}}},
\end{align*}
where the last inequality uses the bound $\max_{i\in [n]}\sum_{j=1}^n|\mathbf{D}^{(n)}_{ij}|\le 2\beta$ for all $n$ large enough. Combining the last two displays show that for any $\ell\ge 1$ we have
\[\max_{i\in [n]}\Bigg|(m_i-t)-\sum_{j=1}^n(\mathbf{D}^{(n)}_{ij})^\ell(m_j-t)\Bigg|\le  K\overline{d}^{-\gamma} \sum_{r=0}^{\ell-1}(2\beta)^r\leq (2\beta)^\ell K\sqrt{\frac{\log n}{\overline{d}^{\gamma}}} ,  \]
and so
\begin{align}\label{eq:bound_11}
\max_{i\in [n]}|m_i-t|\le \lVert (\mathbf{D}^{(n)})^\ell\rVert_\infty \max_{i\in [n]}|m_i-t| +(2\beta)^\ell K\overline{d}^{-\gamma}\lesssim  \lVert (\mathbf{D}^{(n)})^\ell\rVert_\infty+(2\beta)^\ell \sqrt{\frac{\log n}{\overline{d}^{\gamma}}}.
\end{align}
We now claim that on the set $A_{n,\sqrt{\overline{d}^{1-\gamma}}}^c\cap C^c_{n,n\log n/\overline{d}^{\gamma}}$ we have
\begin{align}\label{eq:op}
\lim_{n\rightarrow\infty}\lVert \mathbf{D}^{(n)}\rVert_2=0.
\end{align}
Given \eqref{eq:op}, using spectral theorem write $(\mathbf{D}^{(n)})^{\top}\mathbf{D}^{(n)}=\sum_{i=1}^n\lambda_i {\bf p}_i{\bf p}_i$, where $\max_{i\in [n]}|\mu_i|=o(1)$, and so
\[\Bigg|\left\{\left((\mathbf{D}^{(n)})^{\top}\mathbf{D}^{(n)}\right)^{\ell}\right\}_{ij}\Bigg|=\Bigg|\left(\sum_{k=1}^n\lambda_i^\ell {\bf p}_k {\bf p}_k'\right)_{ij}\Bigg|\le \max_{k\in [n]}|\lambda_k|^{\ell} .\]
This immediately shows that setting $\ell=\delta \log n$ with $\delta=\frac{\gamma }{2\log (2\beta)}$, using \eqref{eq:bound_11} we have 
\[\max_{i\in [n]}|m_i-t|\lesssim n\max_{i\in [n]}|\lambda_i|^{\ell}+(2\beta)^{\delta \log n}\sqrt{\frac{\log n}{\overline{d}^{\gamma}}}\le \sqrt{\frac{\log n}{\overline{d}^{\gamma/2}}} \to 0 \]
for all $n$ large enough. Thus for any $\varepsilon>0$, for all $n$ large we have
\begin{align*}
\PG(\max_{i\in [n]}|m_i-t|>\varepsilon)\le &\PG(A_{n,\sqrt{\overline{d}^{1-\gamma}}})+\PG( C_{n,n\log n /\overline{d}^{\gamma}}),
\end{align*} 
which along with \eqref{eq:al} and \eqref{eq:c} gives
\begin{align*}\log \PG(\max_{i\in [n]}|m_i-t|\ge  \varepsilon)\lesssim &-\min\Big[\overline{d}^{1-\gamma},\frac{n\log n}{\overline{d}^{\gamma}}\Big]=-\overline{d}^{1-\gamma},
\end{align*}
which verifies \eqref{eq:new_claim}, and hence completes the proof of part (b)(ii).

It thus suffices to verify \eqref{eq:op}. To this effect, setting $\bJ:=\frac{1}{n}{\bf 1}{\bf 1}'$ and $\bDL$ denote a diagonal matrix with entries $\bDL_{ii}:= \text{sech}^2(\beta \xi_i)$ we have
\begin{align*}
\lVert \mathbf{D}^{(n)}\rVert_2 =\lVert \beta \bQ\bDL -\beta (1-t^2)\bJ\rVert_2 \le \beta \lVert \bQ\bDL-\bJ\bDL\rVert_2 +\beta \lVert \bJ[ \bDL -(1-t^2)\bI]\rVert_2,
\end{align*}
from which \eqref{eq:op} follows on noting that $\lVert \bQ-\bJ\rVert_2=o(1)$ by assumption, and \[\lVert \bJ [\bDL -(1-t^2)I]\rVert^2_2 =\lVert \bJ [\bDL-(1-t^2)I]^2 \bJ\rVert_2=\frac{1}{n}\sum_{i=1}^n [\bDL_{ii}-(1-t^2)]^2\to 0,\]
where the last limit uses the fact that we are working in the set $C_{n,n\log{n}/\overline{d}^{\gamma}}$. This verifies \eqref{eq:op}, and hence completes the proof of part (c).

	\emph{Part (c)(ii)}
	
%
By~\cite[Equation (4.12)]{Deb2020}, on the set $A_{n,\lambda}$ for any $\ell\ge 1$ we have:
	$$\max_{i\in [n]}\bigg|m_i-\bar{\mm}-\sum_{j=1}^n \tilde{\bQ}^\ell_{ij}(m_j-\bar{\mm})\bigg|\lesssim \lambda\ell \sqrt{\log{n}}/\sqrt{\bd},$$
	where $\tilde{\bQ}_{ij}:=\bQ_{ij}$ for $i\ne j$, and $\tilde{\bQ}_{ii}:=\frac{d_{\max}}{\overline{d}}-1$ satisfies $\btQ{\bf 1}={\bf 1}$.
	Set $\ell=D\log{n}$ for $D$ fixed but large enough so that $\max_{i\in [n]}\tilde{\bQ}^\ell_{ii}\le \frac{3}{n}$ (such a $D$ exists by~\cite[Lemma 5.2(a)]{Deb2020}). Then we have \[\Bigg|\max_{i\in [n]}\sum_{j=1}^n\tilde{\bQ}_{ij}^\ell(m_j-\bar{\mm})\Bigg|\leq \sqrt{\max_{i\in [n]}\tilde{\bQ}_{ii}}\sqrt{\sum_{j=1}^n(m_j-\bar{\mm})^2}\le \sqrt{3}\sqrt{\frac{1}{n}\sum_{i=1}^n(m_i-\bar{\mm})^2},\] and so
	\begin{align}\label{eq:basebd}
	&\;\;\;\;\PG\left(\max_{i\in [n]} \big|m_i-\bar{\mm}\big|\geq \lambda(\log{n})^{3/2}/\sqrt{\bd}+\lambda\sqrt{n^{-1}\sum_{i}\mu_i}\right)\nonumber \\ &\leq \PG(A_{n,\lambda}^c)+\PG\left(\sum_{i=1}^n (m_i-\bar{\mm})^2\gtrsim \lambda^2n(\log{n})^3/\bd+\lambda^2\sum_{i=1}^n \mu_i\right).
	\end{align}
	The desired conclusion follows from this on noting that for $\delta$ small enough, using a similar argument as~\eqref{eq:changemes}, we have 
	$$\log{\EG[\exp\left(\delta\sum_{i=1}^n (m_i-\bar{\mm})^2\right)]}\lesssim \frac{n}{\overline{d}}\log n+\sum_{i=1}^n \mu_i,$$
	where we have used~\cite[Lemma 2.2]{Deb2020}. 

	\end{proof}
%
%
\subsection{Proof of Lemma~\ref{lem:slarge}}
	    \emph{(a)} For $i\neq j$ setting $m_i^{(j)}:=\sum_{k\ne j}\bQ_{ik}X_k$ we have $\EZ [(X_j-\tanh(\beta m_j))\tanh(\beta m_i^{(j)})]=0$, and so
	    \begin{align}\label{eq:corr_1}
	    \Big|\EZ [X_iX_j]-\EZ[\tanh(\beta m_i) \tanh(\beta m_j)]\Big|=\Big|\EZ [(X_j-\tanh(\beta m_j))\tanh(\beta m_i)]\Big|\le 2\beta \bQ_{ij}.
	    \end{align}
	     Now, using part (a) of Lemma \ref{lem:mcontrol}, for any  positive integer $k$ we have 
	     \begin{align}\label{eq:moment_k}
	     \EZ\left[ \max_{i\in [n]}|m_i|^k\right]\lesssim \alpha_n^k.
	     \end{align} 
	     Using this, a Taylor's series expansion gives
	     $|\tanh(\beta m_i)-\beta m_i|\lesssim |m_i|^3$, which gives
	     \begin{align}\label{eq:bound_0}
	     \Big|\EZ\left[\tanh(\beta m_i)\tanh(\beta m_j)\right]-\beta^2\EZ\left[m_i m_j\right]\Big|\lesssim \alpha_n^4.
	     \end{align}
	     Equipped with \eqref{eq:corr_1}, \eqref{eq:moment_k} and \eqref{eq:bound_0}, we now complete the proof of part (a). To verify the first estimate, setting $\rho_{n}^{(1)}:=\max_{k\ne \ell}\EZ\left[X_k X_{\ell}\right]$ we have
	     	     \begin{align*}
	      \EZ\left[m_im_j\right]=\sum_{k,\ell=1}^n \bQ_{ik}\bQ_{j\ell}\EZ\left[X_k X_\ell\right] \le \rho_{n}^{(1)}\max_{i\in [n]}\Big(\frac{d_i}{\overline{d}}\Big)^2
	      \end{align*}
	      which along with \eqref{eq:corr_1}, \eqref{eq:moment_k} and \eqref{eq:bound_0} gives the existence of a constant $M$ such that
	        \begin{align}\label{eq:bound_part_a}
	     \rho_n^{(1)} \le \beta^2  \rho_{n}^{(1)}\max_{i\in [n]}\Big(\frac{d_i}{\overline{d}}\Big)^2+M\Big[\max_{i\ne j}\bQ_{ij}+\alpha_n^4\Big]\Rightarrow \rho_n^{(1)}\lesssim \frac{1}{\overline{d}}+\alpha_n^4\lesssim \frac{1}{\overline{d}},
	     \end{align}
	     where the last bound uses $\overline{d}\gtrsim (\log n)^2$.
	Proceeding to show the second bound, setting $\rho_n^{(2)}:=\max_{(k,\ell)\notin \mathcal{E}_n} \EZ [X_k X_\ell]$ 
	and using \eqref{eq:bound_part_a} gives the existence of a finite constant $\tilde{M}$ free of $n$ such that for all $(i,j)\notin \mathcal{E}_n^c$ we have
	    \begin{align*}
	     \EZ[m_i m_j]&=\sum_{k,\ell=1}^n \bQ_{ik}\bQ_{j\ell}\EZ [X_{k}X_{\ell}]\\
	    &\le \frac{\tilde{M}}{\overline{d}} \sum_{(k,\ell)\in \mathcal{E}_n} \bQ_{ik}\bQ_{j\ell}\textcolor{black}{\mathbf{G}_n(k,l)}+\beta^2\max_{(k,\ell)\notin \mathcal{E}_n}\E X_k X_\ell\sum_{(k,\ell)\notin \mathcal{E}_n}\bQ_{ik}\bQ_{j\ell}\\
	    & \le \tilde{M} (\bQ^3)_{ij}+\beta^2\rho_n^{(2)}\frac{d_id_j}{\overline{d}^2} =\tilde{M} (\bQ^3)_{ij}+\beta^2\rho_n^{(3)} \frac{d_id_j}{\overline{d}^2}.
	    \end{align*}
	    Since $\max_{i\in [n]}\frac{d_i}{\overline{d}}\to 1$ and $\beta<1$, 
	   using \eqref{eq:corr_1} and \eqref{eq:bound_0} along with the above display gives $\rho_n^{(2)}\lesssim  \max_{i,j}\bQ_{ij}^{3}+\alpha_n^4,$
which is the second conclusion of part (a).
	     \\
	     

	    \emph{Part (b) (i)}
	    A Taylor's series expansion of $g_i(x):=\tanh(\beta x+\mu_i)$  gives
	    \begin{align}\label{eq:taylor_11}
	    g_i(m_i)=g_i(t)+(m_i-t)g_i'(t)+\frac{g''_i(t)}{2}(m_i-t)^2+\frac{g'''_i(\xi)}{3!}(m_i-t)^3,
	    \end{align}
	    where $\xi_i$ lies between $m_i$ and $t$. Also, using part (b) of Lemma \ref{lem:mcontrol}, for any positive integer $k$ we have
	    \begin{align}\label{eq:moment_cond}
	    \EG \max_{i\in [n]}|m_i-t|^k\lesssim \alpha_n^k,
	    \end{align}
	    and so
	   \begin{align*}
	   \EG [g_i(m_i)g_j(m_j)|\bar{\bX}\geq 0]=&g_i(t) g_j(t)+ g_i'(t) g_j(t)\EG [m_i-t|\bar{\bX}\geq 0]+g_i(t)g_j'(t)\EG [m_j-t|\bar{\bX}\geq 0]\\
	    +&\frac{g_i''(t)g_j(t)}{2} \EG[(m_i-t)^2|\bar{\bX}\geq 0]+\frac{g_i(t)g_j''(t)}{2}\EG [(m_j-t)^2|\bar{\bX}\geq 0]\\
	    +&g_i(t)g_j(t)\EG[ (m_i-t)(m_j-t)|\bar{\bX}\geq 0]+O(\alpha_n^3),\\
	    \EG [g_i(m_i)|\bar{\bX}\geq 0]=&g_i(t)+g_i'(t)\EG [m_i-t|\bar{\bX}\geq 0]+\frac{g_i''(t)}{2} \EG[(m_i-t)^2|\bar{\bX}\geq 0]+O(\alpha_n^3).
	   \end{align*}
	   A direct multiplication using the last display gives
	   \begin{align}\label{eq:cond_2}
	  \Big|\CG(g_i(m_i),g_j(m_j)|\bar{\bX}\geq 0)-g_i'(t)g_j'(t)\CG (m_i-t,m_j-t|\overline{\bX}\geq 0)\Big|\lesssim \alpha_n^3.
	   \end{align} 
	 
	   We now claim that there exists $\rho>0$ such that for all $n$ large enough we have
	    \begin{align}\label{eq:cond_tanh}
	   \max_{i\ne j}\Big|\CG(X_i,X_j|\bar{\bX}\geq 0)-\CG(\tanh(\beta m_i+\mu_i),\tanh(\beta m_j+\mu_j)|\bar{\bX}\geq 0)\Big|\lesssim \bQ_{ij}+e^{-\rho n}
	    \end{align}
	
	    Note that \eqref{eq:cond_tanh}, \eqref{eq:moment_cond} and \eqref{eq:cond_2} are the analogues of \eqref{eq:corr_1}, \eqref{eq:moment_k} and \eqref{eq:bound_0}.  Given these estimates, the rest of the proof follows along similar lines as in part (a), on noting that 
	    \[\max_{1\le i,j\le n}g_i'(t)g_j'(t)=\beta^2\max_{1\le i,j\le n}\text{sech}^2(\beta t+\mu_i)\text{sech}^2(\beta t+\mu_j)=\beta^2 \text{sech}^4(\beta t+\max_{i\in [n]}\mu_i)<1-\epsilon\]
	    for some fixed $\epsilon>0$ and all large enough $n$ by assumption. It only remains to verify \eqref{eq:cond_tanh}. To this effect, we first claim that there exists a (different) constant $\rho>0$ free of $n$, such that for any function $f:\{-1,1\}^n\mapsto[-1,1]$ we have
	    \begin{align}\label{eq:margin}
		\max_{i\in [n]}\bigg|\EG \bigg[f(\bX)|\bar{\bX}\geq 0\bigg]-\EG \bigg[f(\bX)|\bar{\bX}_{i}>0\bigg]\bigg|\le 5e^{-\rho n},
		\end{align}
		where $\bar{\bX}_i:=\frac{1}{n}\sum_{j\ne i}X_j$.
	    Given \eqref{eq:margin}, noting that $\PG (\bar{\bX}\geq 0)\ge 1/3$ for all large enough $n$, we have
	    \begin{align*}
	    \ & \EG (X_iX_j|\bar{\bX}\geq 0)\\
	    =&\frac{\EG \Big[X_iX_j\mathbf{1}\{\bar{\bX}\geq 0\}\Big]}{\PG(\bar{\bX}\geq 0)}\\
	    =&\frac{\EG [X_iX_j\mathbf{1}\{\bar{\bX}_{j}>0\}]}{\PG(\bar{\bX}\geq 0)}+O(e^{-\rho n})\text{  [By \eqref{eq:margin}]}\\
	    =&\frac{\EG[ \tanh(\beta m_i+\mu_i) X_j \mathbf{1}\{\bar{\bX}_j>0\}]}{\PG(\bar{\bX}\geq 0)}+O(e^{-\rho n})\\
	    =&\frac{\EG[ \tanh(\beta m_i^{(j)}+\mu_i) X_j \mathbf{1}\{\bar{\bX}_i>0\}]}{\PG (\bar{\bX}\geq 0)}+O(e^{-\rho n})+O(\bQ_{ij})\text{  [By \eqref{eq:margin}]}\\
	    =&\frac{\EG [\tanh(\beta m_i^{(j)}+\mu_i) \tanh(\beta m_j+\mu_j)\mathbf{1}\{\bar{\bX}_i>0]}{\PG (\bar{\bX}\geq 0)}+O(e^{-\rho n})+O(\bQ_{ij})\\
	    =&\frac{\EG [\tanh(\beta m_i+\mu_i) \tanh(\beta m_j+\mu_j)\mathbf{1}\{\bar{\bX}\geq 0]}{\PG (\bar{\bX}\geq 0)}+O(e^{\rho n})+O(\bQ_{ij})\text{  [By \eqref{eq:margin}]}\\
	    =&\EG [\tanh(\beta m_i+\mu_i) \tanh(\beta m_j+\mu_j)|\bar{\bX}\geq 0]+O(e^{-\rho n})+O(\bQ_{ij}).
	    \end{align*}
	    A similar calculation gives 
	    \begin{align}\label{eq:var_cond}
	    \EG [X_i|\bar{\bX}\geq 0]=\EG [\tanh(\beta m_i+\mu_i)|\bar{\bX}\geq 0]+O(e^{-\rho n})+O(\bQ_{ij}),
	    \end{align} which along with the above display gives
	    \eqref{eq:cond_tanh}, as desired. To complete the proof, we need to verify \eqref{eq:margin}. To this effect, use \cite[Equation (2.8)]{Deb2020} to note that\[\PG(\bar{\bX}\geq 0,\bar{\bX}_i\le 0)+\PG(\bar{\bX}\le 0,\bar{\bX}_i>0)\le e^{-\rho n}\Rightarrow |\PG(\bar{\bX}\geq 0)-\PG(\bar{\bX}_i>0)|\le e^{-\rho n},\]
and so
\begin{align*}
	&\Big|\EG[f(\bX)|\bar{\bX}\geq 0]-\EG[f(\bX)|\bar{\bX}_i>0]\Big|\\
	\le &\Big|\frac{1}{\PG(\bar{\bX}\geq 0)}-\frac{1}{\PG(\bar{\bX}_i>0)}\Big|+\PG(\bar{\bX}\geq 0, \bar{\bX}_i<0)+\PG(\bar{\bX}<0, \bar{\bX}_i>0)
	\le 5e^{-\rho n},
	\end{align*}
	    which verifies \eqref{eq:margin}.
	    
	   \emph{Part (b)(ii)}  
	It suffices to show that
	$\max_{i\in [n]}\Big|\EG(X_i|\bar{\bX}\geq 0)-\tanh(\beta t+\mu_i)\Big|\lesssim \alpha_n.$
	But this follows on using \eqref{eq:taylor_11} and \eqref{eq:moment_cond} to note that
	\[\Big| \EG (\tanh(\beta m_i+\mu_i)|\bar{\bX}\geq 0)-\tanh(\beta t +\mu_i)\Big|\lesssim \EG |m_i-t|\lesssim  \alpha_n.\]

 \emph{Part (c).} To begin, use \cite[Equation (4.25)]{Deb2020} and \cite[Lemma 2.4 (c)]{Deb2020}, coupled with the assumption $\bd\gg \sqrt{n}(\log{n})^5$ to note that
 \[ \EZ \bar{\mm}^2\lesssim \frac{1}{n}+\E \bar{\bX}^2\lesssim \frac{1}{n}+\frac{1}{\sqrt{n}}\lesssim \frac{1}{\sqrt{n}}.\]
Also, using part (c)(ii) of Lemma \ref{lem:mcontrol} with $\bmu={\bf 0}$ gives
 \begin{align*}
  &|\EZ \tanh(m_i)\tanh(m_j)|\\ =&\left|\EZ \bigg[\left(\tanh( m_i)-\tanh(\bar{\mm})+\tanh(\beta \bar{\mm})\right)\left(\tanh( m_j)-\tanh(\bar{\mm})+\tanh(\beta \bar{\mm})\right)\bigg]\right|\\
 \le& \EZ|(m_i-\bar{\mm})(m_j-\bar{\mm})|+\EZ|(m_i-\bar{\mm}) \bar{\mm}|+\EZ|(m_j-\bar{\mm})\bar{\mm}|+\EZ\bar{\mm}^2\\
 \le &\EZ \max_{i\in [n]}|m_i-\bar{\mm}|^2+2\sqrt{\EZ \max_{i\in [n]}|m_i-\bar{\mm}|^2}\sqrt{\EZ \bar{\mm}^2}+\EZ\bar{\mm}^2\\
 \lesssim & \alpha_n^2+2\frac{\alpha_n}{n^{1/4}}+\frac{1}{\sqrt{n}}\lesssim \frac{1}{\sqrt{n}}.
 \end{align*}

%
%
%
%
%
%

\subsection{Proof of Lemma~\ref{lem:smallinsig}}
	\emph{Part (a)}. To begin, note that for any $i\in [n]$ we have
	\[\PZ (X_i=x_i|X_j=x_j,j\ne i)=\frac{e^{\beta m_i}}{e^{\beta m_i}+e^{-\beta m_i}}\ge \frac{1}{1+e^{-2C_u'\beta}}=:p\]
	where $\max_{i\in [n]} |m_i(\mathbf{X})|\leq \max_{1\leq i\leq n}\sum_{j=1}^n \bQ_{ij}\leq C_u'$. The above display on taking expectation gives $\PZ (X_i=x_i|X_j=x_j, j\in A)\ge p$ for any $A\subset [n]$, and so 
	$\PZ (\bX_{S}={\bf a})\ge p^{s}$. On the other hand, setting \[m_i(S):=\sum_{j\in S^c}Q_{ij}x_j,\quad \Omega(S):=\left\{{\bf x}\in \{-1,1\}^{n-s}:\max_{i\in [n]}|m_i(S)|\le \lambda \frac{\log n}{\sqrt{\overline{d}}}\right\},\] we have \[|m_i-m_i(S)|\le \frac{s}{\overline{d}}\lesssim \frac{\log n}{\overline{d}}\Rightarrow\PZ (\bX_{S^c}\notin \Omega(S))\le n^{-\rho\lambda}\] for some $\rho>0$,
	where we use part (a) of Lemma \ref{lem:mcontrol}. Consequently, for $\lambda$ sufficiently large, for any ${\bf a}\in \{-1,1\}^s$ we have
	\begin{align*}
	\ & \PZ (\bX_S={\bf a})\ge p^{s}\ge p^{c\log n}\gg n^{-\rho\lambda},\text{ and so }\\
	&\lim_{n\rightarrow\infty}\sup_{S:|S|=s,\ {\bf a}\in \{-1,1\}^s}\Big|\frac{\PZ (\bX_S={\bf a})}{\PZ (\bX_S={\bf a},\bX_{S^c}\in \Omega(S))}-1\Big|=0.
	\end{align*}
	It thus suffices to estimate $\PZ (\bX_S={\bf a},\bX_{S^c}\in \Omega(S))$. To this effect, we have
	\begin{align}
	\notag&\PZ (\bX_S={\bf a},\bX_{S^c}\in \Omega(S))\\
	\notag&\frac{1}{\ZZ}\sum_{{\bf x}\in \Omega(S)} \exp\Big(\frac{\beta}{2}\sum_{i,j\in S^c}\bQ_{ij}x_ix_j+\beta\sum_{i\in S, j\in S^c}\bQ_{ij}a_ix_j+\frac{\beta}{2}\sum_{i,j\in S}\bQ_{ij}a_ia_j\Big)\\
	\notag= &\frac{1}{\ZZ}\sum_{{\bf x}\in \Omega(S)} \exp\Big(\frac{\beta}{2}\sum_{i,j\in S^c}\bQ_{ij}x_ix_j+\beta\sum_{ i\in S}a_im_i(S)+\frac{\beta}{2}\sum_{i,j\in S}\bQ_{ij}a_ia_j\Big)\\
	\label{eq:upp_infty}\le &\frac{\exp(\lambda\beta s \frac{\log n}{\sqrt{\bd}}+\frac{\beta s^2}{2\bd}\Big)}{\ZZ}\sum_{{\bf x}\in\Omega(S)}\exp\Big(\frac{\beta}{2}\sum_{i,j\in S^c}\bQ_{ij}x_ix_j\Big).
	\end{align}
	A similar calculation gives
	\begin{align}\label{eq:low_infty}
	\PZ (\bX_S={\bf a},\bX_{S^c}\in \Omega(S))\ge &\frac{\exp(-\lambda\beta s \frac{\log{n}}{\sqrt{\bd}}-\frac{\beta s^2}{2\bd}\Big)}{\ZZ}\sum_{{\bf x}\in\Omega(S)}\exp\Big(\frac{\beta}{2}\sum_{i,j\in S^c}\bQ_{ij}x_ix_j\Big).
	\end{align}
	Note that both the bounds in \eqref{eq:upp_infty} and \eqref{eq:low_infty} are free of ${\bf a}$ and depend on $S$ only through its cardinality, i.e., $s$. The ratio of these two bounds converge to $1$ using the fact that $\bd \gg (\log n)^4$.  The conclusion in~\eqref{eq:smallinsig} then follows.
	
	\emph{Part (b)} The proof of part (a) goes through verbatim after replacing the term $\frac{\log n}{\sqrt{\overline{d}}}$ in the definition of $\Omega(S)$ by $\frac{(\log n)^{2/3}}{\overline{d}^{1/6}}$.

	\emph{Part (c)}. Again the proof is  similar, except that we now use the bound $\sum_{i\in S}|a_im_i(S)-a_it|\le \lambda s \frac{\log n}{\sqrt{\overline{d}}}$ for ${\bf x}\in \Omega(S)$, where the revised $\Omega(S)$ is defined as:
	$$\Omega(S):=\left\{{\bf x}\in \{-1,1\}^{n-s}:\max_{i\in [n]}|m_i(S)-t|\le \lambda \frac{\log n}{\sqrt{\bar d}},\ \frac{1}{n}\sum_{i\in S^c} X_i>0\right\}.$$

\subsection{Proof of Lemma~\ref{lem:altbeh}}

Since the probability distribution $\PG$ is monotonic in $\bmu$ (coordinate-wise), without loss of generality we can replace $\mu$ by $\tilde{\mu}$, where $\tilde{\mu}_i=\min(A,\frac{\sqrt{\bar d}}{s\sqrt{\log n}})$ for $i\in S$ and $\tilde{\mu}_i=0$ otherwise. Then we have
\[ \sum_{i=1}^n\tilde{\mu}_i=\min\Big(sA, \sqrt{\frac{\overline{d}}{\log n}}\Big),\Rightarrow n^{1/4}\ll \sum_{i=1}^n\tilde{\mu}_i\le \sqrt{\frac{\overline{d}}{\log n}}. \]
Therefore, more generally, we will show the existence of $\eta>0$ such that 
\begin{align}\label{eq:maintailbd}
\PG(n\overline{\bX}^3> \eta \sum_{i=1}^n\mu_i)\to 1\text{ whenever }
n^{1/4}\ll \sum_{i=1}^n\mu_i\lesssim \sqrt{\frac{\overline{d}}{\log n}},\quad \max_{i\in [n]}\mu_i\to 0.
\end{align}
This choice gives
\[\sum_{j=1}^nQ_{ij}\mu_j\le \frac{1}{\overline{d}}\sum_{j=1}^n\mu_j\le \frac{\sqrt{\overline{d}}}{\overline{d}\sqrt{\log n}}=\frac{1}{\sqrt{\overline{d}\log n}}\ll \sqrt{\frac{\log n}{\overline{d}}},\]
and so Lemma \ref{lem:mcontrol} part (c)(ii) applies.

We begin by claiming the following, whose proofs we defer.
	\begin{eqnarray}\label{eq:mainclaim1}
	\EG(n^{1/4}\overline{\bX})^6&\lesssim & n^{-1/2}\left(\sum_{i=1}^n \mu_i\right)^2,\\
	\label{eq:mainclaim2}\EG\left[\sum_{i=1}^n (d_i/\bd-1)X_i\right]^2&\lesssim& n^{1/3}\left(\sum_{i=1}^n \mu_i\right)^{2/3} ,\\
\label{eq:mainclaim3}\EG\left[\sum_{i=1}^n (X_i-\tanh(m_i+\mu_i))\right]^2&\lesssim &n^{1/3}\left(\sum_{i=1}^n \mu_i\right)^{2/3}.
	\end{eqnarray}
	An application of the triangle inequality gives:
	\begin{align}\label{eq:lowerbd}
	\sum_{i=1}^n (X_i-\tanh(m_i))&\geq \sum_{i=1}^n (\tanh(m_i+\mu_i)-\tanh(m_i))-\Bigg|\sum_{i=1}^n (X_i-\tanh(m_i+\mu_i))\Bigg|\nonumber \\ &\geq \rho\sum_{i=1}^n \mu_i-\Bigg|\sum_{i=1}^n (X_i-\tanh(m_i+\mu_i))\Bigg|
	\end{align}
	for some positive constant $\rho$ free of $n$. 
	Also, since $\sum_{i=1}^n \mu_i\gg n^{1/4}$, using \eqref{eq:mainclaim3} gives
	\begin{align}\label{eq:lowerbd2}
	\Bigg|\sum_{i=1}^n(X_i-\tanh(m_i+\mu_i))\Bigg|=O_p\left(n^{1/3}\left(\sum_{i=1}^n\mu_i\right)^{2/3}\right)=o_p\left(\sum_{i=1}^n\mu_i\right).
	\end{align}
	Finally, a Taylor's series expansion of $\tanh(m_i)$ at $\bar{\mm}$ gives
	\begin{align}\label{eq:taylor_2020}
	&\Big|\sum_{i=1}^n\tanh(m_i)-n\tanh(\bar{\mm})\Big|\lesssim |\bar{\mm}|\sum_{i=1}^n(m_i-\bar{\mm})^2+\sum_{i=1}^n|m_i-\bar{\mm}|^3,
	\end{align}
	and so
\begin{align}\label{eq:final}
\notag&\Big|\sum_{i=1}^n(X_i-\tanh(m_i))\Big|\\
\notag\lesssim& n|\bar{\bX}|^3+n|\bar{\mm}-\bar{\bX}|+|\bar{\bX}|\sum_{i=1}^n(m_i-\bar{\mm})^2+\sum_{i=1}^n|m_i-\bar{\mm}|^3\\
=& n\overline{\bX}^3+O_p\left(n^{1/6}\bigg(\sum_{i=1}^n \mu_i\bigg)^{1/3}\right)+O_p\left(n^{-1/3}\bigg(\sum_{i=1}^n \mu_i\bigg)^{1/3} \left(\frac{n(\log{n})^3}{\bd^2}+n\overline{\bmu}\right)\right)\nonumber\\
+& O_p\left(\frac{n(\log{n})^{9/2}}{\bd^{3/2}}+n(\overline{\bmu})^{3/2}\right)\nonumber \\ =&n\overline{\bX}^3+o_p\left(\sum_{i=1}^n \mu_i\right),
	\end{align}
	
	where the last line uses~\eqref{eq:mainclaim1},~\eqref{eq:mainclaim2}~and Lemma \ref{lem:mcontrol} part (c)(ii). Combining \eqref{eq:final} along with~\eqref{eq:lowerbd} and \eqref{eq:lowerbd2} completes the proof of~\eqref{eq:maintailbd}.

	
	We now verify the three claims \eqref{eq:mainclaim1}, \eqref{eq:mainclaim2}, \eqref{eq:mainclaim3}. To this effect,  note that
		\begin{align}\label{eq:mainclaim21}
	&\EG(n^{1/4}\bar{\bX})^6\lesssim \frac{1}{\sqrt{n}}\left\{\left(\sum_{i=1}^n \mu_i\right)^2+n^{-1/2}\EG\bigg[\sum_{i=1}^n (d_i/\bd-1)X_i\bigg]^2\right\},\\
	&\label{eq:mainclaim22}	\EG\left[\sum_{i=1}^n (d_i/\bd-1)X_i\right]^2\lesssim \sqrt{n}[1+\EG(n^{1/4}\bar{\bX})^2].
	\end{align}
	together imply \eqref{eq:mainclaim1} and \eqref{eq:mainclaim2}. It thus suffices to verify \eqref{eq:mainclaim21},  \eqref{eq:mainclaim22} and \eqref{eq:mainclaim3}.
	\begin{itemize}
	\item{Proof of \eqref{eq:mainclaim21}}

	The proof follows closely the proof of~\cite[Equation 4.13]{Deb2020}.
	
	Set $T_n=n^{-3/4}\sum_{i=1}^n X_i$, and form an exchangeable pair $(\mathbf{X},\mathbf{X}')$ as follows: Let $I$ denote a randomly sampled index from $\{1,2,\ldots ,n\}$. Given $I=i$, replace $X_i$ with an independent $\pm 1$ valued random variable $X_i'$ with mean $\tanh(\beta m_i+\mu_i)=\EG[X_i|(X_j,j\neq i)]$, and let $\mathbf{X}':=(X_1,\cdots,X_{i-1},X_i',X_{i+1},\cdots,X_n)$. Then we have
	$$\EG[T_n-T_n'|\mathbf{X}]=\frac{1}{n^{7/4}}\sum_{i=1}^n (X_i-\tanh(m_i+\mu_i))=\frac{1}{n^{7/4}}\sum_{i=1}^n(X_i-\tanh(m_i))+\frac{1}{n^{7/4}}\sum_{i=1}^n \xi_i\mu_i,$$
	where $\{\xi_i\}_{1\le i\le n}$ are bounded random variables. This, along with the last display gives
	\begin{align*}
	&\;\;\;\big|\EG[T_n-T_n'|\mathbf{X}]-n^{-3/2}T_n^3/3\big|\nonumber \\ &\le \frac{2}{15} n^{-2}|T_n|^5+M\bigg\{n^{-3/4}|\bar{\bX}-\bar{\mm}|+n^{-2}|T_n|\sum_{i=1}^n (m_i-\bar{\mm})^2+n^{-7/4}\bigg|\sum_{i=1}^n (m_i-\bar{\mm})^3\bigg|\bigg\}
	\end{align*}
	for some fixed constant $M>0$.
	On multiplying both sides of the above inequality by $|T_n|^3$ and taking expectation gives
	\begin{align*}
	 \ & \EG[T_n^6]
	\\&\leq  (2/5)n^{-1/2}\EG|T_n|^8\\
	&+3M\left\{\begin{array}{c} n^{3/4}\EG\left[|T_n|^3|\overline{\bX}-{\overline{\mm}}|\right]+n^{-1/2}\EG\left[|T_n|^4\sum_{i=1}^n (m_i-\overline{{\mm}})^2\right] \\+n^{-1/4}\EG\left[|T_n|^3\big|\sum_{i=1}^n (m_i-\overline{{\mm}})^3\big|\right]+n^{-1/4}\EG|T_n|^3\sum_{i=1}^n \mu_i\end{array}\right\}\\&+3n^{3/2}\big|\EG(T_n-T_n')T_n^3\big|.
	\end{align*}
	This is the analogue of~\cite[Equation 4.15]{Deb2020} in the case when $\bmu$ is not necessarily ${\bf 0}$. Also, using part (c)(ii) of Lemma \ref{lem:mcontrol}  we have
	\begin{align}\label{eq:qformuni}
	 \EG\Big[\sum_{i=1}^n(m_i-\bar{\mm})^2\Big]^p \lesssim  (n\alpha_n^2+n\overline{\bmu})^p,\quad \EG\max_{1\le i\le N}|m_i-\bar{\mm}|^p \lesssim \left(\alpha_n+\sqrt{\overline{\bmu}}\right)^p,
	\end{align}
	where we use the fact that $\bar{\bmu}\lesssim \frac{\overline{d}}{n}\le \alpha_n$, as $\overline{d}\gg \sqrt{n}$. This is the analogue of \cite[Equation 4.17]{Deb2020}. Hereon, proceeding similarly as in the derivation of~\cite[Equation 4.13]{Deb2020} gives \eqref{eq:mainclaim21}.
	

\item{Proof of \eqref{eq:mainclaim22}}

	This proof is similar to the derivation of~\cite[Equation 4.14]{Deb2020}. 
	
	With $\tilde{\bQ}$ as defined in the proof of Lemma \ref{lem:mcontrol} part (c)(ii), 
	set $\mathbf{c}^{\top}:=(d_1/\bd-1,\ldots ,d_n/\bd-1)$,  $(\mathbf{c}^{(\ell)})^{\top}:=\mathbf{c}^{\top}(\tilde{Q})^\ell$ and $x_\ell:=\EG[\sum_{i=1}^n c^{(\ell)}_iX_i]^2$. Note that, we can write $x_\ell=T_{1\ell}+T_{2\ell}+T_{3\ell}$ where,
	\begin{align*}
	T_{1\ell}:=\EG\Bigg[& \sum_{i=1}^n c^{(l)}_i(X_i -\tanh(m_i+\mu_i))\Bigg]^2, \qquad T_{2\ell}:=\EG\left[\sum_{i=1}^n c^{(l)}_i\tanh(m_i+\mu_i)\right]^2\\
	&T_{3\ell}:=2\EG\left[\sum_{i\neq j} c^{(\ell)}_i c^{(\ell)}_j(X_i-\tanh(m_i+\mu_i))\tanh(m_i+\mu_i)\right].
	\end{align*}
	Using Lemma \ref{lemma:chatterjee} it follows that 
	\begin{align}\label{eq:t1}
	T_{1\ell}\lesssim \lVert \mathbf{c}^{(\ell)}\rVert_2^2\le \lVert \mathbf{c}\rVert_2^2.
	\end{align}
	For controlling $T_{3\ell}$ setting $m_i^{(j)}:=\sum_{k\ne j}\bQ_{ik}X_k$ as before we have
	\begin{align}
	\notag|T_{3\ell}|=&2\left|\sum_{i\neq j} c^{(\ell)}_i c^{(\ell)}_j\EG(X_i-\tanh(m_i+\mu_i))(\tanh(m_i+\mu_i)-\tanh(m_i^{(j)}+\mu_i))\right|\\
	\lesssim &\sum_{i\ne j}|c^{(\ell)}_i| |c^{(\ell)}_j|\bQ_{ij}\lesssim \lVert \mathbf{c}^{(\ell)}\rVert_2^2\le \lVert \mathbf{c}\rVert_2^2.\label{eq:t3}
	\end{align}
	For bounding $T_{2\ell}$, note that
	\begin{align*}
	 \ & n|\bar{\bX}-\bar{\mm}|\\
	 =&\Bigg|\sum_{i=1}^nc_iX_i\Bigg|\\
	\le& \Bigg|\sum_{i=1}^nc_i(X_i-\tanh(m_i+\mu_i))\Bigg|+\Bigg|\sum_{i=1}^nc_i(\tanh(m_i+\mu_i)-\tanh(\bar{\mm}+\mu_i))\Bigg|+\Bigg|\sum_{i=1}^nc_i\tanh(\mu_i)\Bigg|\\
	\lesssim &\Bigg|\sum_{i=1}^nc_i(X_i-\tanh(m_i+\mu_i))\Bigg|+\lVert {\mathbf c}\rVert_2\sqrt{\sum_{i=1}^n(m_i-\bar{\mm})^2}+n\alpha_n\bar{\bmu},
	\end{align*}
	where the last step uses the bound $\max_{i\in [n]}|c_i|=\max_{i\in [n]}\Big|\frac{d_i}{\overline{d}}-1\Big|\lesssim \alpha_n$. Consequently, for any positive integer $p$ using \eqref{eq:qformuni} we have:
	\begin{align}\label{eq:basestimate1}
	\EG(\overline{\bX}-\overline{{\mm}})^{2p}\le&
	\notag n^{-2p}\left(\lVert \mc\rVert_2^{2p}(1+n^p(\alpha_n^2+\overline{\bmu})^{p})+(n\alpha_n\bar{\bmu})^{2p}\right)\\
	\le& n^{-2p}\left[n^p\alpha_n^{2p}(1+n^p\alpha_n^{2p}+n^p\overline{\bmu}^p)+\alpha_n^{2p}\overline{d}^p\right]\lesssim \frac{(\log n)^{2p}}{\overline{d}^{2p}}\lesssim \frac{1}{n^p},
	\end{align}
	where the last line uses the bound
	\[\lVert \mathbf{c}\rVert_2^2=\sum_{i=1}^n\Big(\frac{d_i}{\overline{d}}-1\Big)^2\lesssim n\alpha_n^2,\quad \sum_{i=1}^n\mu_i\le \sqrt{\overline{d}},\quad \overline{d}\ge \sqrt{n}\log n.\]
	In the subsequent proof, unless otherwise stated,~\eqref{eq:basestimate1} will always be invoked with $p=1$. Combining~\eqref{eq:basestimate1}~and~\eqref{eq:mainclaim1}, we get:
	$$\nu_n\lesssim 1+n^{-1/2}\left(\sum_{i=1}^n\mu_i\right)^2+\frac{n^{3/2}(\log n)^2}{\overline{d}^2}\lesssim \sqrt{n}.$$
	which again on invoking \eqref{eq:basestimate1} (with $p=3$) gives
	\begin{align}\label{eq:basestimate2}
	\EG\overline{\bX}^6=\frac{\nu_n}{n^{3/2}}\lesssim n^{-1},\; \bmme\overline{\bmu}^6\lesssim n^{-1},\; \EG\left[\sum_{i=1}^n m_i^6\right]\lesssim 1.
	\end{align}
	Now, a Taylor's series expansion gives $\tanh(m_i+\mu_i)=\tanh(m_i)+\mu_i\xi_i$ for bounded random variables $\xi_i$, and so
	\begin{align}\label{eq:basecal}
	T_{2\ell}&=\EG\left[\sum_{i=1}^n c^{(\ell)}_i\tanh(m_i)\right]^2+2\left\{\EG\left[\sum_{i=1}^n c^{(\ell)}_i\tanh(m_i)\right]\right\}\left(\sum_{i=1}^n c^{(\ell)}_i\xi_i\mu_i\right)\nonumber\\&+\left(\sum_{i=1}^n c^{(\ell)}_i\xi_i\mu_i\right)^2.
	\end{align}
	Setting $\theta_n:=1+\EG(n^{1/4}\overline{\bX})^2$ and invoking \eqref{eq:basestimate1} and \eqref{eq:qformuni} the terms in the RHS of~\eqref{eq:basecal} can be estimated as
	\begin{align*}
	\EG\left[\sum_{i=1}^n c_i^{(\ell)}\tanh(m_i)\right]^2\le &\lVert \mathbf{c}^{(\ell)}\rVert_2^2\cdot \Big[\E_\mu\sum_{i=1}^n(m_i-\bar{\mm})^2+n\EG\bar{\mm}^2\Big]
	\lesssim \lVert \mathbf{c}\rVert_2^2\sqrt{n}\theta_n,\\
	\bigg|\sum_{i=1}^n c^{(\ell)}_i\xi_i\mu_i\bigg|\leq &\max_{i\in [n]} \sum_{i=1}^n |c_i^{(\ell)}|\cdot \sum_{i=1}^n \mu_i\le \max_{i\in [n]}|c_i| \sum_{i=1}^n\mu_i\lesssim 1. 
	\end{align*}
%
	Combining the above estimate with~\eqref{eq:basestimate1}~and~\eqref{eq:basestimate2}~and repeating the derivation of~\cite[(4.32)]{Deb2020}, we get the existence of $M<\infty$ such that for all $\ell\ge 1$ we have
	$$x_{\ell}\leq x_{\ell+1}+2M\sqrt{x_{\ell+1}}\beta_n+M^2\beta_n^2,\quad \beta_n:=1+\lVert \mc\rVert_2\sqrt{\theta_n}$$
	The above relation is similar to~\cite[(4.32)]{Deb2020}. 
	Proceeding in a similar manner, setting $L=D(\log n)^2$ with $D$ large enough, an inductive argument gives $x_{\ell}\le (L-\ell+1)^2 M^2\beta_n^2$, giving
	\[x_0=\EG\Big[\sum_{i=1}^n\Big(\frac{d_i}{\overline{d}}-1\Big)X_i\Big]^2\le(L+1)^2 M^2 \beta_n^2\lesssim (\log n)^4(1+\lVert \mc\rVert_2^2\theta_n),\]
	which verifies \eqref{eq:mainclaim22}.
	\item{Proof of \eqref{eq:mainclaim3}}
	A direct expansion gives
	\begin{align*}
	&\;\;\;\EG\left[\sum_{i=1}^n (X_i-\tanh(m_i+\mu_i))\right]^2\\ &=\EG\left[\sum_{i=1}^n \sech^2(m_i+\mu_i)\right]\\
	&+\EG\left[\sum_{i\neq j} (X_i-\tanh(m_i+\mu_i))(\tanh(m_j^i+\mu_j)-\tanh(m_j+\mu_j))\right]\\ &{=} \EG\left[\sum_{i=1}^n \sech^2(m_i+\mu_i)\right]+\EG\left[\sum_{i\neq j} (1-X_i\tanh(m_i+\mu_i))(-\bQ_{ji}\sech^2(m_j^i+\mu_j))\right]\\&+O\left(\sum_{i,j=1}^n\bQ_{ij}^2\right)\\ &{=}\EG\left[\sum_{i=1}^n \sech^2(m_i+\mu_i)\left(1-\sum_{j=1}^n \bQ_{ji}\sech^2(m_j+\mu_j)\right)\right]+O\left(\sum_{i,j=1}^n\bQ_{ij}^2\right),
	\end{align*}
	The first term of the above display,  splits into two terms as follows:
	\begin{align*}
	&\;\;\;\EG\left[\sum_{i=1}^n \sech^2(m_i+\mu_i)\left(1-\frac{d_i}{\bd}\right)\right]+\EG\left[\sum_{i,j} \bQ_{ij}\sech^2(m_i+\mu_i)\tanh^2(m_j+\mu_j)\right]\\ &\overset{(a)}{\lesssim} \EG\left[\Bigg|\sum_{i=1}^n (\sech^2(\overline{\mm})+\xi_{i1}\mu_i+\xi_{i2}(m_i-\overline{\mm}))\left(1-\frac{d_i}{\bd}\right)\Bigg|\right]+\EG\left[\sum_{i,j} \bQ_{ij}\tanh^2(m_j+\mu_j)\right]\\ &\lesssim\max_i \Bigg|\frac{d_i}{\bd}-1\Bigg|\sum_{i=1}^n \mu_i+\sum_{i=1}^n \EG \Bigg|\left(\frac{d_i}{\bd}-1\right)(m_i-\overline{\mm})\Bigg|+\EG\left[\sum_{i,j} \bQ_{ij}(m_j^2+\mu_j^2)\right]\\ 
	&\lesssim 1+\sqrt{\sum_{i=1}^n\left(\frac{d_i}{\overline{d}}-1\right)^2 }\sqrt{\EG \sum_{i=1}^n(m_i-\bar{\mm})^2}+\EG\left[\sum_{i=1}^n (m_i-\overline{\mm})^2\right]+n\EG\overline{\mm}^2+\sum_{j=1}^n \mu_j^2\\
	&\overset{(b)}{\lesssim} 1+\sqrt{\frac{n\log n}{\overline{d}}}\cdot \sqrt{\frac{n (\log n)^3}{\overline{d}}+\sum_{i=1}^n\mu_i}+\frac{n (\log n)^3}{\overline{d}}+\sum_{i=1}^n\mu_i+n\EG\left(\overline{\bX}-\overline{\mm}\right)^2+n\EG[\overline{\bX}]^2\\ &+\left(\max_{j}\mu_j\right)\sum_{j=1}^n \mu_j\lesssim \sqrt{n}+\EG[\overline{\bX}]^2\lesssim n^{1/3}\left(\sum_{i=1}^n \mu_i\right)^{2/3}. 
	\end{align*}
Here (a) follows from standard Taylor expansions. Note that $\xi_{i1}$ and $\xi_{i2}$ are uniformly bounded random variables. The bounds in (b), (c) and (d) are consequences of Lemma~\ref{lem:mcontrol} part (c)(ii),~\eqref{eq:basestimate1} and~\eqref{eq:mainclaim1} respectively.
%
%
	\end{itemize}

%
%
%
%
%
%
%
%
%
%
%
%
%
%
%
%
%
%

\bibliographystyle{plainnat}
\bibliography{biblio_structured}
\end{document}